\newcommand{\eins}{\mathbf{1}}
\DeclareSymbolFont{wideparensymbol}{OMX}{yhex}{m}{n}
\DeclareMathAccent{\wideparen}{\mathord}{wideparensymbol}{"F3}
\newcommand{\supp}{\text{supp}}
\pgfplotsset{compat=newest}
\pgfplotsset{plot coordinates/math parser=false,trim axis left}
\newlength\figureheight
\newlength\figurewidth
\begin{document}

\title{Histogram Transform Ensembles for Density Estimation}

\author{\name Hanyuan Hang \email hanyuan.hang@samsung.com \\
\addr AI Lab \\  
Samsung Research China - Beijing \\
100028 Beijing, China}


\maketitle


\begin{abstract}We investigate an algorithm named histogram transform ensembles (HTE) density estimator whose effectiveness is supported by both solid theoretical analysis and significant experimental performance. On the theoretical side, by decomposing the error term into approximation error and estimation error, we are able to conduct the following analysis: 
First of all, we establish the universal consistency under $L_1(\mu)$-norm. 
Secondly, under the assumption that the underlying density function resides in the H\"{o}lder space $C^{0,\alpha}$, we prove almost optimal convergence rates for both single and ensemble density estimators under $L_1(\mu)$-norm and $L_{\infty}(\mu)$-norm for different tail distributions, whereas in contrast, for its subspace $C^{1,\alpha}$ consisting of smoother functions, almost optimal convergence rates can only be established for the ensembles and the lower bound of the single estimators illustrates the benefits of ensembles over single density estimators. 
In the experiments, we first carry out simulations to illustrate that histogram transform ensembles surpass single histogram transforms, which offers powerful evidence to support the theoretical results in the space $C^{1,\alpha}$.
Moreover, to further exert the experimental performances, we propose an adaptive version of HTE and study the parameters by generating several synthetic datasets with diversities in dimensions and distributions.
Last but not least, real data experiments with other state-of-the-art density estimators demonstrate the accuracy of the adaptive HTE algorithm.
\end{abstract}

\begin{keywords}
Density estimation, histogram transform, ensemble learning, learning theory
\end{keywords}

\section{Introduction}

Density estimation is now ubiquitous and vital in modelling more complex tasks due to the fact that once an explicit estimate of the density function is obtained, various kinds of statistical inference can be subsequently conducted, such as assessing the multimodality, skewness, or any other structure in the distribution of the data  \citep{scott2015multivariate, silverman1986density}, novelty detection \citep{Pimentel2014A}, anomaly detection \citep{Breunig2000LOF, Angiulli2002Fast}, summarizing the Bayesian posteriors, classification and discriminant analysis  \citep{simonoff1996smoothing, Rodr2006Rotation}, conducting single-level density-based clustering  \citep{Har1975}, and being proved useful in Monte Carlo computational methods like bootstrap and particle filter \citep{doucet2001an}. Other real-world applications, especially in the computer vision society, include image detection  \citep{ma2015small, liu2016highway, wang2018manifoldbased}, gesture recognition  \citep{chang2016nonparametric}, image reconstruction  \citep{ihsani2016a}, deformable 3D shape matching  \citep{vestner2017product}, image defogging \citep{jiang2017fog}, hyper-spectral unmixing \citep{zhou2018a}, geographical epidemiology \citep{Bithell2010An}, pattern recognition \citep{Lissack76Error}, just to name a few.

Formally speaking, the density estimation problem considered in this paper can be stated as follows. 
Based on i.i.d.~observations $D = \{x_1, \ldots, x_n\}$ drawn from an unknown distribution $\mathrm{P}$, we aim to estimate the underlying density $f$.
Over the decades, there has been a wealth of literature focusing on finding different appropriate methods to solve density estimation problems as well as verifying the theoretical results on the consistency and convergence rates. Among all the attempts made in this direction, nonparametric density estimators prevail since weaker assumptions are applied to the underlying probability distribution \citep{hwang1994nonparametric, hardle2012nonparametric}. Typical nonparametric density estimators include the partition-based and kernel-based density estimators and each category has its own merits in its own regimes. In this study, we are interested in the former one, namely, partition-based density estimator, due to the simple idea, fairly convenient implementation and the nature of getting rid of outliers (see \cite{freedman1981on, lugosi1996consistency}). To be specific, denote $\pi = (A_j)_{j=1}^m$ as a partition of the input space $\mathcal{X} \subset \mathbb{R}^d$, then for $x \in A_j$, the partition-based density estimator is expressed as
\begin{align}\label{equ::Kernel}
f_{\mathrm{D}, \pi}(x) := \frac{1}{n\mu (A_j)}\sum_{i=1}^n \eins_{A_j} (x_i)
\end{align}
where $\mu$ is the Lebesgue measure.
Unfortunately, although consistency \cite{glick1973samplebased, gordon1978asymptotically} and a strong universal consistency \cite{devroye1983distributionfree} of the histograms are established, they are sub-optimal for not being smooth. Moreover, the non-smoothness of histogram and therefore insufficient accuracy brings great obstacles to practical applications. 
In order to conquer this problem and taking more smoothness into account, another popular algorithm called the kernel density estimation (KDE), also known as \emph{Parzen-Rosenblatt estimation} comes into the public view. \cite{hang2018kernel} verifies that almost optimal convergence rates can be achieved with kernel and bandwidth chosen appropriately when dealing with cases where the density is assumed to be smooth. Theoretically, these optimal rates depend on the order of smoothness of density function on the entire input space. In the actual world, however, the smoothness of density function varies from areas to areas. That is to say, due to the lack of adaptivity, KDE can be largely sensitive to outliers, letting spurious bumps to appear, and tending to flatten the peaks and valleys of the density (see \cite{terrell1992variable, botev2010kernel}).
With the aim of filling possible gaps, a wealth of literature is pulled into finding desirable density estimators based on appropriate partitions (see e.g., \cite{klemela2009multivariate}, \cite{Density2014}
), wavelet \citep{David96Wavelet}, extended Engle's ARCH model \citep{Bruce94Autoregressive}, Bayesian methods \citep{Escobar95Bayesian, liu2014multivariate, liu2015multivariate}, just to name a few. 
Nevertheless, as far as we know, it is a challenge for an algorithm to have the theoretical availability for both local and global analysis, the experimental advantages of achieving efficient and accurate prediction results on real data, and stronger resistance to the curse of dimensionality compared to the existing common algorithms.

This study is conducted under such background, aiming at solving these tough problems mentioned earlier. To be specific, motivated by the random rotation ensemble algorithms proposed in \cite{Rodr2006Rotation, Ezequiel2013HT, JMLR:v17:blaser16a}, we investigated a density estimator named histogram transform ensembles which takes full advantage of the histogram methods and ensemble learning. Specifically, its merits can be stated as twofold.
First, the algorithm can be local adaptive by applying adaptive stretching with respect to samples of each dimension for a piecewise constant function to approximate the true density. Second, the global smoothness of our obtained density estimator is attributed to the randomness of different partitions together with the integration of multiple histograms. 
The algorithm starts with mapping the input space into transformed feature space under a certain histogram transform. Then, the process proceeds by partitioning the transformed space into non-overlapping cells with the unit bandwidth where the bin indices are chosen as the round points. Our histogram transform ensembles density estimator is finally specified by \eqref{equ::Kernel} in the corresponding cell in the input space.
Last but not least, by integrating estimators generated by the above procedure, we obtain a density ensemble with satisfying asymptotic smoothness.

The contributions of this paper come from both the theoretical and experimental aspects: \emph{(i)} Considering from a theoretical perspective, 
our histogram transform ensembles density estimator shows its power in achieving universal consistency and almost optimal convergence rates under some mild conditions. Firstly, we show that the universal consistency is obtained under $L_1(\mu)$-norm. Secondly, by decomposing the error term into approximation error and estimation error, which correspond to data-free and data-dependent error terms, respectively, we obtain the convergence rate in the space $C^{0,\alpha}$ in the sense of $L_1(\mu)$-norm and $L_{\infty}(\mu)$-norm both optimal up to a logarithmic factor.
Thirdly, we present a theorem in the space $C^{1,\alpha}$ to illustrate the benefits of histogram transform ensembles over single histogram transform density estimators from a theoretical point of view. 
More precisely, we show that in the space $C^{1,\alpha}$ the histogram transform ensembles can attain the almost optimal rate $O((n/\log n)^{-(1+\alpha)/(2(1+\alpha)+d)})$ 
whereas a single histogram transform density estimator 
fails to achieve this rate 
whose lower bound is of order $O(n^{-1/(2+d)})$ under certain conditions.
\emph{(ii)} In the experimental results, we first verify the estimation power of ensemble estimators over the single ones, which corresponds our former theoretical demonstrations, based on HTE with independent splittings, also referred to as na\"{i}ve histogram transform ensembles (NHTE). Note that we base the entire theoretical analysis on this NHTE algorithm. Then in order to further exert the prediction ability of this histogram-based density estimator, we take more sample information into consideration, and propose the adaptive histogram transform ensembles (AHTE). 
It is worth mentioning that the randomness of partitions originated from the distribution of histogram transform together with the inherent nature of ensembles allow us to build a density estimator with satisfying asymptotic smoothness, which greatly improves the progress of prediction.
As a result, when conducting both synthetic and real data comparisons, our AHTE algorithm is predominant in accuracy compared with other state-of-the-art density estimators, including NHTE and the most commonly used KDE method.

This paper is organized as follows. Section \ref{sec::methodology} is a preliminary section covering some required fundamental notations, definitions and technical histogram transform that is related to our one ensemble variation of histogram density estimator. Section \ref{sec::TheoreticalResults} is concerned with main results on the consistency and convergence rates under different norms of our density estimator. To be specific, universal consistency under $L_1(\mu)$-norm with relatively weak constrains is shown in Section \ref{sec::consistency}, and the convergence rates 
under $L_1(\mu)$-norm and $L_{\infty}(\mu)$-norm
 are derived in Section \ref{subsec::C0}. 
 Moreover, a more complete theory is obtained by conducting upper and lower bounds to illustrate the benefits of histogram transform ensembles $f_{\mathrm{P}, \mathrm{E}}$ over single histogram transform density estimators $f_{\mathrm{P}, H}$ in Section \ref{sec::LowerBoundSingles}. Some comments and discussions related to the main results will also be presented in this section. Section \ref{sec::error_analysis} provides a detailed exposition of bounding decomposed error terms named approximation error and estimation error respectively. In Section \ref{sec::numerical_experiments}, a numerical example is shown in Section \ref{sec::counter} to illustrate the benefits of ensemble estimators over single ones, which coincides with the former theoretical analysis. For the rest of Section \ref{sec::numerical_experiments}, we provide an adaptive version of histogram transform ensembles, namely AHTE, and conduct
numerical comparisons among different density estimation methods based on both synthetic and real data sets. For the sake of clarity, we place all the proofs of Section \ref{sec::TheoreticalResults} and Section \ref{sec::error_analysis} in Section \ref{sec::proofs}. In Section \ref{sec::conclusion}, we close this paper with a conclusive summary, a brief discussion and several remarks.

\section{Methodology} \label{sec::methodology}

In this section, we aim to study an algorithm named histogram transform ensembles (HTE) for density estimation problem. Firstly, in Section \ref{sec::prelims}, we introduce some mathematical notations to be used throughout the entire paper.
Then in Section \ref{sec::HTE} we present the so called histogram transform approach through defining every crucial element such as rotation matrix $R$, stretching matrix $S$ and translator vector $b$. Based on the partition of the input space induced by the histogram transforms, we are then able to formulate the HTE for density estimation in section \ref{sec::alHTE}.

\subsection{Notations}\label{sec::prelims}

Let $\mathcal{X} \subset \mathbb{R}^d$ be a subset, $\mu := \lambda^d$ be the Lebesgue measure with $\mu(\mathcal{X}) > 0$, and $\mathrm{P}$ be a probability measure with support $\mathcal{X}$ which is absolute continuous with respect to $\mu$ with density $f$. 
We denote $B_r$ as the centered ball of $\mathbb{R}^d$ with radius $r$, that is
\begin{align*}
B_r 
:= [-r, r]^d
:= \{ x = (x_1, \ldots, x_d) \in \mathbb{R}^d : x_i \in [-r, r], i = 1, \ldots, d \},
\end{align*}
write $B_r^c := \mathbb{R}^d \setminus B_r$ for the complement of $B_r$, and
for any $\delta \in (0, r/2)$,
\begin{align*}
B_{r,\delta}^+ 
:= [\delta, r - \delta]^d
:= \{ x = (x_1, \ldots, x_d) \in \mathbb{R}^d : x_i \in [\delta, r - \delta], i = 1, \ldots, d \}.
\end{align*}
Moreover, for any $x \in \mathbb{R}^d$ and $r > 0$, $B_r(x)$ denote the ball with center $x$ and radius $r$.
Recall that for $1\leq p < \infty$, the $L_p$-norm is defined as $\| x \|_p := (x_1^p + \ldots + x_d^p)^{1/p}$, and the $L_{\infty}(\mu)$-norm is defined as $\| x \|_{\infty} := \max_{i=1,\ldots,d} |x_i|$. 
Throughout this paper, we shall make frequent use of the following multi-index notations.
For any vector $x = (x_i)_{i=1}^d \in \mathbb{R}^d$ and numbers $a, \underline{x}_0, \overline{x}_0 \in \mathbb{R}$, we write
$\lfloor x \rfloor := (\lfloor x_i \rfloor)_{i=1}^d$,
$x^{-1} := (x_i^{-1})_{i=1}^d$,
$a - x := (a - x_i)_{i=1}^d$,
$\log(x) := (\log x_i)_{i=1}^d$ and 
$x \in [\underline{x}_0, \overline{x}_0]$ indicates $x_i \in [\underline{x}_0, \overline{x}_0]$, $i=1,\ldots,d$.
In the sequel, we use the notation $a_n \lesssim b_n$ to denote that there exists a positive constant $c$ such that $a_n \leq c b_n$, for all $n \in \mathbb{N}$.

\subsection{Histogram Transforms} \label{sec::HTE}
To give a clear description of one possible construction procedure of histogram transforms, we introduce a random vector $(R, S, b)$ which represents the rotation matrix, stretching matrix and translation vector, respectively. To be specific, 
\begin{itemize}
    \item 
[$R$] denotes the rotation matrix which is a real-valued $d \times d$ orthogonal square matrix with unit determinant, that is
\begin{align}\label{RotationMatrix}
R^{\top} = R^{-1} \quad \text{ and } \quad \det(R) = 1.
\end{align}
    \item 
[$S$] stands for the stretching matrix which is a positive real-valued $d \times d$ diagonal scaling matrix with diagonal elements $(s_i)_{i=1}^d$ that are certain random variables.
Obviously, there holds
\begin{align}\label{StretchingMatrix}
\det(S) = \prod_{i=1}^d s_i.
\end{align}
Moreover, we denote 
\begin{align}\label{equ::s}
{s} = (s_i)_{i=1}^d,
\end{align}
and the bin width vector measured on the input space is given by
\begin{align}\label{equ::h}
h =  s^{-1}.
\end{align}
    \item 
[$b$] $\in [0,1]^d$ is a $d$ dimensional vector named translation vector. 
\end{itemize}

\begin{figure}[H]
\begin{minipage}[t]{0.99\textwidth}  
\centering  
\includegraphics[width=\textwidth]{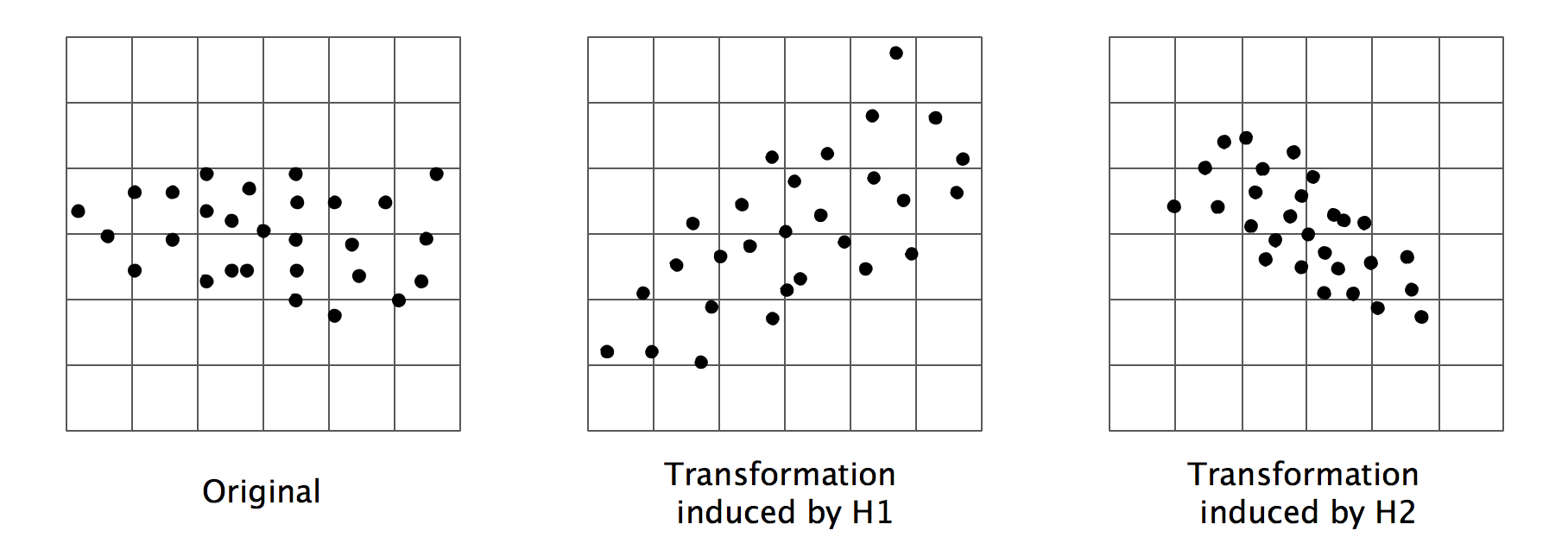}  
\end{minipage}  
\centering  
\caption{Histogram transforms, composed of random rotations, stretchings, and translations.}
\label{fig:RHT}
\end{figure}

Here we describe a practical method for their construction we are confined to in this study. Starting with an $d \times d$ square matrix $M$, consisting of $d^2$ independent univariate standard normal random variates, a Householder $Q R$ decomposition  \cite{Householder1958Unitary} is applied to obtain a factorization of the form $M = R \cdot W$, with orthogonal matrix $R$ and upper triangular matrix $W$ with positive diagonal elements. The resulting matrix $R$ is orthogonal by construction and can be shown to be uniformly distributed. Unfortunately, if $R$ does not feature a positive determinant then it is not a proper rotation matrix according to definition (\ref{RotationMatrix}). However, if this is the case then we can flip the sign on one of the column vectors of $M$ arbitrarily to obtain $M^+$ and then repeat the Householder decomposition. The resulting matrix $R^+$ is identical to the one obtained earlier but with a change in sign in the corresponding column and $\det(R^+) = 1$, as required for a proper rotation matrix, see \cite{JMLR:v17:blaser16a} for a brief account of the existed algorithms to generate random orthogonal matrices.

After that, we build a diagonal scaling matrix with the signs of the diagonal of $S$ where the elements $s_k$ are the well known Jeffreys prior \citep{Jeffreys1946An}, that is, we draw $\log (s_i)$ from the uniform distribution over certain interval of real numbers $[\log (\underline{s}_0), \log (\overline{s}_0)]$ for fixed constants $\underline{s}_0$ and $\overline{s}_0$ with $0<\underline{s}_0 < \overline{s}_0 < \infty$. By \eqref{equ::h}, there holds $h_i \in [\overline{s}_0^{-1}, \underline{s}_0^{-1}]$, $i=1,\ldots,d$. For simplicity and uniformity of notations, in the sequel, we denote $\overline{h}_0 = \underline{s}_0^{-1}$ and $\underline{h}_0 = \overline{s}_0^{-1}$, then we can say $h_i \in [\underline{h}_0, \overline{h}_0]$, $i=1,\ldots,d$.

Moreover, the translation vector $b$ is drawn from the uniform distribution over the hyper-cube $[0, 1]^d$.

Based on the above notations, we define the histogram transform $H : \mathcal{X} \to \mathcal{X}$ by
\begin{align}\label{HistogramTransform}
H(x) := R \cdot S \cdot x + b
\end{align}
as is shown in Figure \ref{fig:RHT}, and the corresponding distribution by $\mathrm{P}_H := \mathrm{P}_R \otimes \mathrm{P}_S \otimes \mathrm{P}_b$, where $\mathrm{P}_R$, $\mathrm{P}_S$ and $\mathrm{P}_b$ represent the distribution for rotation matrix $R$, stretching matrix $S$ and translation vector $b$ respectively.

Moreover, we denote $H'$ as the affine matrix $R \cdot S$. Clearly, there holds
\begin{align}
\det(H') = \det(R) \cdot \det(S) = \prod_{i=1}^d s_i.
\end{align} 
The histogram probability $p (x | H', b)$ is defined by considering the bin width $h = 1$ in the transformed space. It is important to note that there is no point in using $h \neq 1$, since the same effect can be achieved by scaling the transformation matrix $H'$. Therefore, let $\lfloor H(x) \rfloor$ be the transformed bin indices, then the transformed bin is given by 
\begin{align}\label{TransBin}
A'_H(x) := \{ H(x') \ | \ \lfloor H(x') \rfloor = \lfloor H(x) \rfloor \}.
\end{align}
The corresponding histogram bin containing $x \in \mathcal{X}$ is 
\begin{align}\label{equ::InputBin}
A_H(x) := \{ x' \ | \ H(x') \in A'_H(x) \}
\end{align}
whose volume is $\mu(A_H(x)) = (\det(H'))^{-1}$.

For a fixed histogram transform $H$, we specify the partition of $B_r$ induced by the histogram rule \eqref{equ::InputBin}. 
Let $(A'_j)$ be the set of all cells generated by $H$, denote $\mathcal{I}_H$ as the index set for $H$ such that $A'_j \cap B_r \neq \emptyset$. As a result, the set
\begin{align*}
\pi_H 
:= (A_j)_{j \in \mathcal{I}_H}
:= (A'_j \cap B_r)_{j \in \mathcal{I}_H}
\end{align*}
forms a partition of $B_r$. For notational convenience, if we substitute $A_0$ for $B_r^c$, then 
\begin{align*}
\pi'_H := (A_j)_{j \in \mathcal{I}_H \cup \{ 0 \}}
\end{align*}
forms a partition of $\mathbb{R}^d$.

\subsection{Histogram Transform Ensembles (HTE) for Density Estimation}\label{sec::alHTE}

\subsubsection{Histogram Transform Density Estimator}

The histogram transform density of a probability measure $\mathrm{Q}$ can be defined as follows:

\begin{definition}[Histogram Transform Density]
Let $\mathrm{Q}$ be a probability measure on $\mathbb{R}^d$ and $H$ be the histogram transform defined as in \eqref{HistogramTransform}. Then the function $f_{\mathrm{Q},H} : \mathbb{R}^d  \to [0, \infty)$ defined by
\begin{align}\label{RHdensity}
f_{\mathrm{Q},H}(x)
= \sum_{j \in \mathcal{I}_H \cup \{ 0 \}} \frac{\mathrm{Q}(A_j)\eins_{A_j}(x)}{\mu(A_j)}  
\end{align}
is called a \emph{histogram transform density} of $\mathrm{Q}$.
\end{definition}

We demonstrate that $f_{\mathrm{Q},H}$ defines the density of a probability measure on $\mathbb{R}^d$ for $f_{\mathrm{Q},H}$ is measurable and 
\begin{align*}
\int_{\mathbb{R}^d} f_{\mathrm{Q},H}(x) \, d\mu(x)
& = \int_{\mathbb{R}^d} \sum_{j \in \mathcal{I}_H \cup \{ 0 \}} \frac{\mathrm{Q}(A_j)\eins_{A_j}(x)}{\mu(A_j)} \, d\mu(x)
   = \sum_{j \in \mathcal{I}_H \cup \{ 0 \}} \int_{A_j} \frac{\mathrm{Q}(A_j)}{\mu(A_j)} \, d\mu(x)  
\\
& = \sum_{j \in \mathcal{I}_H \cup \{ 0 \}} \frac{\mathrm{Q}(A_j) \mu(A_j)}{\mu(A_j)}  
   = \sum_{j \in \mathcal{I}_H \cup \{ 0 \}} \mathrm{Q}(A_j) 
   = \mathrm{Q}(\mathbb{R}^d)   
   = 1.
\end{align*}

Recalling that $\mathrm{P}$ is a probability measure on $\mathbb{R}^d$ with the corresponding density function $f$, by taking $\mathrm{Q} = \mathrm{P}$ with $d\mathrm{P} = f \, d\mu$, then for $x \in A_j$, we have
\begin{align} \label{equ::RHdensity}
f_{\mathrm{P},H}(x) 
= \frac{\mathrm{P}(A_j)}{\mu(A_j)} 
= \frac{\int_{A_j} f(x') \, d\mu(x')}{\mu(A_j)}. 
\end{align}
Specifically, when $\mathrm{Q}$ is the empirical measure $\mathrm{D}_n = \frac{1}{n} \sum_{i=1}^n \delta_{x_i}$, then $\mathrm{D}_n(A)$ is the expectation of $\eins_A$ with respect to $\mathrm{D}_n$, which is
\begin{align*}
\mathrm{D}_n(A)
= \mathbb{E}_{\mathrm{D}_n} \eins_A
= \frac{1}{n} \sum_{i=1}^n \delta_{x_i}(A)
= \frac{1}{n} \sum_{i=1}^n \eins_A(x_i),
\end{align*}
then the histogram transform density in this study can be expressed as
\begin{align}\label{equ::EMRHdensity}
f_{\mathrm{D}_n,H}(x)
= \frac{\mathrm{D}_n(A_j)}{\mu(A_j)}
= \frac{1}{n\mu(A_j)} \cdot \sum_{i=1}^n \eins_{A_j}(x_i),
\end{align}
which can also be expressed in the transformed space as 
\begin{align*}
f_{\mathrm{D}_n,H}(x)
= \frac{1}{n\mu(A_j)} \cdot \sum_{i=1}^n \eins_{\{ \lfloor H(x) \rfloor = \lfloor H(x_i) \rfloor \}}(x_i).
\end{align*}

From now on, for notational simplicity, we will suppress the subscript $n$ of $\mathrm{D}_n$ and denote $\mathrm{D} := \mathrm{D}_n$, e.g., $f_{\mathrm{D},H} := f_{\mathrm{D}_n,H}$. The map from the training data to $f_{\mathrm{D},H}$ is called the histogram transform density rule with histogram transform $H$.

\subsubsection{Histogram Transform Ensembles (HTE) Density Estimator}

Now we formulate the histogram transform ensembles for density estimation. Ensembles consisting of different estimators have been highly recognized as an effective technique to improve the performance over single estimator in the study which inspire us to apply them to the histogram transform density estimator.

Let $\{ f_{\mathrm{D},H_t} \}_{t=1}^T$ be $T$ histogram transform density estimators generated by the histogram transforms $\{ H_t \}_{t=1}^T$ respectively, which is defined by
\begin{align*} 
f_{\mathrm{D},H_t}(x)
= \sum_{j\in \mathcal{I}_{H_t} \cup \{ 0 \}} \frac{\mathrm{D}(A_{t,j})\eins_{A_{t,j}}(x)}{\mu(A_{t,j})},
\end{align*}
where $\{A_{t,j} \}_{j \in \mathcal{I}_{H_t}}$ is the random partition of $B_r$ induced by histogram transform $H_t$. Therefore, the histogram transform ensembles for density estimation can be presented as 
\begin{align}\label{histogramensemble}
f_{\mathrm{D},\mathrm{E}}(x)
:= \frac{1}{T} \sum_{t=1}^T f_{\mathrm{D},H_t}(x).
\end{align}

\subsubsection{Main Algorithm}\label{sec::NHTE}

As is shown in Algorithm \ref{alg::NHTE}, histogram transform ensembles (HTE) for density estimation is constructed following the manner of \textit{ensemble learning}. We first generate $T$ histogram density estimators each induced by a random histogram transform partition, and then the ensemble estimator is built simply by taking the average.

\begin{algorithm}[htbp]
\caption{HTE for Density Estimation}
\label{alg::NHTE}
\KwIn{
Training data $D:=(X_1, \ldots, X_n)$;
\\
\quad\quad\quad\quad Number of histogram transforms $T$;
\\
\quad\quad\quad\quad Lower and upper bound of bandwidth parameters $\underline{h}_0$ and $\overline{h}_0$.
} 
\For{$t =1 \to T$}{

Generate random affine transform matrix
$H_t=R_t\cdot S_t + b_t$;\\
Apply data independent splitting to the transformed sample space; \\
Apply piecewise constant density estimators to each cell; \\ 
Compute the histogram density estimator $f_{\mathrm{D},H_t}(x)$ induced by $H_t$.
}
\KwOut{The histogram transform ensemble for density estimation is
\begin{align*}
	f_{\mathrm{D},\mathrm{E}}(x)
	= \frac{1}{T} \sum_{t=1}^T f_{\mathrm{D},H_t}(x).
\end{align*}
}
\end{algorithm}

\section{Theoretical Results}\label{sec::TheoreticalResults}

In this section, we present main results on the convergence rates of our empirical decision function $f_{\mathrm{D},H}$ and $f_{\mathrm{D},\mathrm{E}}$ to the Bayesian decision function $f$ in the sense of different norms under certain restrictions.

This section is organized as follows. In Section \ref{sec::assump}, we firstly introduce some fundamental assumptions to be utilized in the theoretical analysis. Then we prove the universal consistency of the HTE under relatively weak assumptions in Section \ref{sec::consistency}. In section \ref{subsec::C0}, we show that
almost optimal convergence rates can be attained by both single and ensemble HTE, whereas in Section \ref{sec::LowerBoundSingles}, for the subspace $C^{1,\alpha}$ consisting of smoother functions, almost optimal convergence rates can only be established for the HTE ensembles and the lower bound of single HTE illustrates the benefits of ensembles over singles. Last but not least, we also present some comments and discussions on the obtained main results as is shown in Section \ref{sec::Comments}.

\subsection{Fundamental Assumptions}\label{sec::assump}

To demonstrate theoretical results concerning convergence rates, fundamental assumptions are required respectively for the target function $f$ and the bin width $h$ of stretching matrix $S$.

First of all, we introduce a general function space $C^{k, \alpha}$ consisting of $(k, \alpha)$-H\"{o}lder continuous functions.

\begin{definition}\label{def::Cp}
Let $r \in (0,\infty), k \in \mathbb{N} \cup \{ 0 \}$ and $\alpha \in (0, 1]$. We say that a function $f : B_r \to \mathbb{R}$ is $(k, \alpha)$-H\"{o}lder continuous, if there exists a constant $c_L \in (0, \infty)$ such that 
\begin{itemize}
\item[(i)] 
$\| \nabla^{\ell} f \| \leq c_L$ for all $\ell \in \{ 1, \ldots, k \}$;
\item[(ii)] 
$\| \nabla^k f(x) - \nabla^k f(x') \| \leq c_L \| x - x' \|^{\alpha}$ for all $x, x' \in B_r$.
\end{itemize}
The set of such functions is denoted by $C^{k, \alpha} (B_r)$.
\end{definition}

Note that
for the special case $k = 0$, the resulting function space $C^{0, \alpha} (B_r)$ coincides with the 
commonly used $\alpha$-H\"{o}lder continuous function space $C^{\alpha} (B_r)$.

Now we assume the underlying true density $f$ lies in the space $C^{k, \alpha}$.

\begin{assumption}\label{assumption::ck}
Let $f$ be the underlying true density, and assume that $f \in C^{k,\alpha}$, where $\alpha \in (0,1]$ and $k = 0, 1$.
\end{assumption}

Then we assume the upper and lower bounds of the bin width $h$ are of the same order, that is, in a specific partition, the extent of stretching in each dimension cannot vary too much. Mathematically, we assume that the stretching matrix $S$ is confined into the class with width satisfying the following conditions.

\begin{assumption}\label{assumption::h}
Let the bandwidth $h$ be defined as in \eqref{equ::h}, assume that 
there exists some constant $c_0 \in (0,1)$ such that 
\begin{align*}
c_0 \overline{h}_0 \leq \underline{h}_0 \leq c_0^{-1}\overline{h}_0.
\end{align*}
In the case that the bandwidth $h$ depends on the sample size $n$, assume that 
there exist constants $c_{0,n} \in (0,1)$ such that 
\begin{align*}
c_{0,n} \overline{ {h}}_{0,n} \leq  \underline{ {h}}_{0,n} \leq c_{0,n}^{-1} \overline{ {h}}_{0,n}.
\end{align*}
\end{assumption}

\subsection{Universal Consistency under $L_1(\mu)$-norm}\label{sec::consistency}

In this subsection, we firstly establish the density estimator $f_{\mathrm{D},H_{t,n}}$ induced by a certain histogram transform $H_t$ related with sample number $n$. Note that for the sake of the simplicity and uniformity of notations, we omit the index $t$ for a fixed $t \in \{1,\ldots,T\}$ and substitute $f_{\mathrm{D},H_n}$ for $f_{\mathrm{D},H_{t,n}}$.  
Moreover, for the sake of convenience, we write $\nu_n := \mathrm{P}^n \otimes \mathrm{P}_H$.

We present results on the universal consistency property of the histogram transform density estimator $f_{\mathrm{D},H_n}$ in the sense of $L_1(\mu)$-norm. To clarify, an estimator $f_{\mathrm{D},H_n}$ is said to be universally consistent in the sense of $L_1(\mu)$-norm if $f_{\mathrm{D},H_n}$ converges to $f$ under $L_1(\mu)$-norm $\nu_n$-almost surely for arbitrary distributions of $\mathrm{P}$ and $\mathrm{P}_H$.

It is worth mentioning that we adopt $L_1(\mu)$-norm to show the consistency when the underlying density function lies in the space $C^{0,\alpha}$, because in this way, we are able to provide a more general conclusion under relatively weak assumptions, where H\"{o}lder continuity is not required.

\begin{theorem} \label{ConsistencyL1}
Let the histogram transform $H_n$ be defined as in \eqref{HistogramTransform} with bandwidth $h_n$ satisfying Assumption \ref{assumption::h}. If 
\begin{align*}
\overline{ {h}}_{0,n} \to 0 
\quad \hbox{and} \quad 
\frac{n \underline{ {h}}_{0,n}^d}{\log n} \to \infty, 
\quad \hbox{as} \quad 
n \to \infty,
\end{align*}
then the density estimator $f_{\mathrm{D},H_n}$ is universally consistent in the sense of $L_1(\mu)$-norm.
\end{theorem}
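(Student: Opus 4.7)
The plan is to use the bias--variance decomposition
$$\|f_{\mathrm{D},H_n} - f\|_{L_1(\mu)} \leq \|f_{\mathrm{D},H_n} - f_{\mathrm{P},H_n}\|_{L_1(\mu)} + \|f_{\mathrm{P},H_n} - f\|_{L_1(\mu)},$$
and show that the deterministic (approximation) term is governed by $\overline{h}_{0,n}\to 0$ while the stochastic (estimation) term is governed by $n\underline{h}_{0,n}^{d}/\log n \to \infty$.

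For the approximation term I would exploit the fact that the conditional-expectation map $T_{H_n}\colon g \mapsto \sum_{j}(\mu(A_j))^{-1}\bigl(\int_{A_j} g\,d\mu\bigr)\eins_{A_j}$ satisfies $f_{\mathrm{P},H_n}=T_{H_n}f$ and is an $L_1$-contraction, $\|T_{H_n}g\|_{L_1(\mu)}\leq \|g\|_{L_1(\mu)}$. Given $\epsilon>0$, approximate $f$ in $L_1(\mu)$ to within $\epsilon$ by a uniformly continuous, compactly supported $g$. Every cell of $\pi_{H_n}$ has diameter at most $\sqrt{d}\,\overline{h}_{0,n}\to 0$ under Assumption \ref{assumption::h}, so uniform continuity of $g$ gives $\|T_{H_n}g-g\|_{\infty}\to 0$ and therefore $\|T_{H_n}g-g\|_{L_1(\mu)}\to 0$. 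Combined with the contraction bound $\|T_{H_n}f-T_{H_n}g\|_{L_1(\mu)}\leq \|f-g\|_{L_1(\mu)}$ and the triangle inequality, this yields $\limsup_n \|f_{\mathrm{P},H_n}-f\|_{L_1(\mu)}\leq 2\epsilon$, and $\epsilon$ is arbitrary. The argument uses no H\"older regularity, only $f\in L_1(\mu)$, which is why universal consistency lives in this norm.

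For the estimation term, expanding the definitions gives
$$\|f_{\mathrm{D},H_n}-f_{\mathrm{P},H_n}\|_{L_1(\mu)} = \sum_{j\in \mathcal{I}_{H_n}\cup\{0\}} |\mathrm{D}(A_j)-\mathrm{P}(A_j)|.$$
Conditional on $H_n$, Bernstein's inequality applied to each indicator $\eins_{A_j}$ yields $|\mathrm{D}(A_j)-\mathrm{P}(A_j)|\lesssim \sqrt{\mathrm{P}(A_j)\log n/n}+\log n/n$ with failure probability at most $n^{-2}$. A union bound over the $N_n=|\mathcal{I}_{H_n}|=O(\underline{h}_{0,n}^{-d})$ cells meeting $B_r$, followed by Cauchy--Schwarz ($\sum_j\sqrt{\mathrm{P}(A_j)}\leq \sqrt{N_n}$), produces the total bound of order $\sqrt{\log n/(n\underline{h}_{0,n}^{d})}\to 0$. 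The resulting aggregate failure probability $N_n/n^2$ is summable under the hypothesis, so Borel--Cantelli upgrades convergence in probability to $\nu_n$-almost sure convergence.

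The main obstacle is the unbounded ``tail cell'' $A_0=B_r^c$, on which $f_{\mathrm{P},H_n}\equiv 0$ and the bias picks up $\int_{B_r^c} f\,d\mu$; for arbitrary $f\in L_1(\mu)$ this cannot be made small at fixed $r$. One must therefore take $r=r_n\to \infty$, which inflates the cell count to $O(r_n^{d}\underline{h}_{0,n}^{-d})$ and requires balancing the three scales $r_n,\underline{h}_{0,n},n$ so that the refined variance bound $\sqrt{r_n^{d}\log n/(n\underline{h}_{0,n}^{d})}$ still vanishes; Assumption \ref{assumption::h} is crucial because it ensures both the uniform cell-diameter estimate and the uniform cell-count estimate hold for every realization in the support of $\mathrm{P}_H$, so that the almost-sure statement under $\nu_n=\mathrm{P}^n\otimes\mathrm{P}_H$ is not spoiled by pathological rotations or stretchings.
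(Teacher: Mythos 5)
Your proposal is correct, and it reaches the theorem by the same overall decomposition \eqref{equ::L1}, but the way you control the estimation error is genuinely different from the paper's. Your approximation argument is essentially the paper's Proposition \ref{ApproximationError}\,(i): density of compactly supported continuous functions in $L_1$, uniform continuity on cells of diameter $O(\overline{h}_{0,n})$, and the $L_1$-contractivity of the cellwise averaging operator (which appears in the paper as the bound $\|\bar{f}-f_{\mathrm{P},H}\|_{L_1(\mu)}\leq\|\tilde{f}-f\|_{L_1(\mu)}$). For the stochastic term, however, the paper does not argue cell by cell: via Lemma \ref{FundamentalLemma} it rewrites $\sum_j|\mathbb{E}_{\mathrm{D}}\eins_{A_j}-\mathbb{E}_{\mathrm{P}}\eins_{A_j}|$ as at most twice a supremum over the class $\Pi_h$ of unions of cells, bounds the VC dimension and covering numbers of this class (Lemmas \ref{VCindex} and \ref{ScriptBhCoveringNumber}), and combines Bernstein's inequality with a union bound over an $\varepsilon$-net to obtain the oracle inequality of Proposition \ref{OracleInequalityL1}, which is what the proof of Theorem \ref{ConsistencyL1} quotes. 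Your route --- Bernstein conditionally on $H_n$ for each of the $O((r/\underline{h}_{0,n})^d)$ cells, a direct union bound, and Cauchy--Schwarz on $\sum_j\sqrt{\mathrm{P}(A_j)}$ --- yields the same order $\sqrt{r^d\log n/(n\underline{h}_{0,n}^d)}$ by more elementary means; what the paper's heavier machinery buys is reuse, since the same $\Pi_h$/covering-number bounds drive the $L_\infty$ oracle inequality and all the rate theorems. Two points where you are actually more careful than the paper: you make the almost-sure claim explicit via Borel--Cantelli with summable failure probabilities $N_n/n^2$ (the paper's $1-1/n$ bounds are not summable and its two-line proof leaves this step implicit), and you surface the tail-cell issue, i.e.\ that $r$ must grow (or be chosen as a function of $\varepsilon$) to kill $\mathrm{P}(B_r^c)$, which the paper hides in the choice of the approximant's support inside $B_r$. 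Your sketch does not pin down the calibration of $r_n$ and of the Bernstein level, but any slowly growing $r_n$ with $r_n^d\log n/(n\underline{h}_{0,n}^d)\to 0$ works under the stated hypotheses, so this is a matter of detail rather than a gap.
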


\subsection{Results in the Space $C^{0,\alpha}$}\label{subsec::C0}

\subsubsection{Convergence Rates for Single Estimators under $L_1(\mu)$-Norm} \label{sec::l1_convergence}

We firstly establish the convergence rates of single histogram transform density estimators under $L_1(\mu)$-norm with three different tail assumptions imposed on $\mathrm{P}$. In particular, analysis will be conducted in situations where the tail of the probability distribution $\mathrm{P}$ has a polynomial decay, exponential decay and disappears, respectively.

\begin{theorem} \label{theorem::ConvergenceRatesL1}
Let the histogram transform $H_n$ be defined as in \eqref{HistogramTransform} with bandwidth $h_n$ satisfying Assumption \ref{assumption::h}. Moreover, suppose that the density $f\in C^{0,\alpha}$. We consider the following cases:
\begin{enumerate}
    \item[(i)] 
$\mathrm{P}(B_r^c) \lesssim r^{- \eta d}$ for some $\eta > 0$ and for all $ r \geq 1$;
    \item[(ii)] 
$\mathrm{P}(B_r^c) \lesssim e^{- a r^\eta}$ for some $a > 0$, $\eta > 0$ and for all $r \geq 1$;
    \item[(iii)] 
$\mathrm{P}(B_{r_0}^c) = 0$ for some $r_0 \geq 1$;
\end{enumerate} 
and the sequences $\underline{h}_{0,n}$ are of the following forms:
\begin{enumerate}
    \item[(i)] 
$\underline{h}_{0,n} =  (\log n/n)^{\frac{1+\eta}{(1+\eta)(2\alpha+d)-\alpha}}$;
    \item[(ii)] 
$\underline{h}_{0,n} =  (\log n/n)^{\frac{1}{2\alpha+d}} (\log n)^{- \frac{d}{\eta} \cdot \frac{1}{2\alpha+d}}$;
    \item[(iii)] 
$\underline{h}_{0,n} =  (\log n/n)^{\frac{1}{2\alpha+d}}$;
\end{enumerate}
then with probability $\nu_n$ at least $1 - \frac{1}{n}$, there holds
\begin{align*}
\|f_{\mathrm{D},H_n} - f\|_{L_1(\mu)} \leq \varepsilon_n, 
\end{align*}
where the convergence rates  
\begin{enumerate}
    \item[(i)] 
$\varepsilon_n \lesssim  (\log n/n)^{\frac{\alpha\eta}{(1+\eta)(2\alpha+d)-\alpha}}$;
    \item[(ii)] 
$\varepsilon_n \lesssim  (\log n/n)^{\frac{\alpha}{2\alpha+d}} (\log n)^{\frac{d}{\eta} \cdot \frac{\alpha+d}{2\alpha+d}}$;
    \item[(iii)] 
$\varepsilon_n \lesssim  (\log n/n)^{\frac{\alpha}{2\alpha+d}}$.
\end{enumerate}
\end{theorem}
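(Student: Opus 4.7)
The plan is a standard bias--variance decomposition. By the triangle inequality,
\begin{align*}
\|f_{\mathrm{D},H_n} - f\|_{L_1(\mu)}
\leq
\underbrace{\|f_{\mathrm{D},H_n} - f_{\mathrm{P},H_n}\|_{L_1(\mu)}}_{\text{estimation error}}
+
\underbrace{\|f_{\mathrm{P},H_n} - f\|_{L_1(\mu)}}_{\text{approximation error}},
\end{align*}
so it suffices to control each summand separately and then optimize over the bandwidth. These two quantities are to be handled by the building-block bounds developed in Section \ref{sec::error_analysis}.

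For the approximation error, the plan is to introduce a truncation radius $r_n$ tailored to the tail assumption: $r_n$ polynomial in $n$ for case (i), $r_n \sim (\log n)^{1/\eta}$ for case (ii), and $r_n = r_0$ fixed for case (iii). Splitting the integral over $B_{r_n}$ and $B_{r_n}^c$, for each cell $A_j \subset B_{r_n}$ the $C^{0,\alpha}$ H\"older condition together with $\mathrm{diam}(A_j) \leq \sqrt{d}\,\overline{h}_{0,n}$ yields $|f_{\mathrm{P},H_n}(x) - f(x)| \leq c_L (\sqrt{d}\,\overline{h}_{0,n})^\alpha$ on $A_j$; summing over cells gives a bound of order $\overline{h}_{0,n}^\alpha r_n^d$. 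On $B_{r_n}^c$, since $\mu(A_0) = +\infty$ forces $f_{\mathrm{P},H_n} \equiv 0$, the contribution reduces to $\mathrm{P}(B_{r_n}^c)$, directly bounded by the case-specific tail hypothesis.

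For the estimation error, the key observation is that the two densities coincide outside $\bigcup_{j \in \mathcal{I}_{H_n}} A_j$, so
\begin{align*}
\|f_{\mathrm{D},H_n} - f_{\mathrm{P},H_n}\|_{L_1(\mu)} = \sum_{j \in \mathcal{I}_{H_n}} |\mathrm{D}(A_j) - \mathrm{P}(A_j)|.
\end{align*}
Conditional on $H_n$ (which is independent of the sample), I would apply Bernstein's inequality to each $\eins_{A_j}$ with failure probability $1/(nN)$, union-bound over the $N \lesssim (r_n/\underline{h}_{0,n})^d$ cells, and then invoke Cauchy--Schwarz in the form $\sum_j \sqrt{\mathrm{P}(A_j)} \leq \sqrt{N}$. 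With $\nu_n$-probability at least $1 - 1/n$ this yields an estimation-error bound of order $\sqrt{(r_n/\underline{h}_{0,n})^d \log n / n}$, once the Bernstein lower-order term is absorbed.

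It then remains to balance the three quantities $\overline{h}_{0,n}^\alpha r_n^d$, $\mathrm{P}(B_{r_n}^c)$, and $\sqrt{(r_n/\underline{h}_{0,n})^d \log n / n}$; Assumption \ref{assumption::h} lets us treat $\underline{h}_{0,n}$ and $\overline{h}_{0,n}$ as being of the same order, so effectively one parameter. In each case one picks $r_n$ to force $\mathrm{P}(B_{r_n}^c)$ below the target rate, and then equates bias and variance to pin down $\underline{h}_{0,n}$. Case (iii) is the cleanest and recovers the classical $(\log n/n)^{\alpha/(2\alpha+d)}$. The main obstacle I anticipate is the careful bookkeeping of logarithmic factors in case (ii): the truncation $r_n^d = (\log n)^{d/\eta}$ enters both the bias and, through the balance equation, the optimal $h_n$, and tracking it is what produces both the $(\log n)^{-d/(\eta(2\alpha+d))}$ correction in the bandwidth and the $(\log n)^{d(\alpha+d)/(\eta(2\alpha+d))}$ factor in the rate; case (i) requires the analogous but algebraically heavier balance with a polynomial $r_n$, giving the exponent $\alpha\eta/((1+\eta)(2\alpha+d)-\alpha)$.
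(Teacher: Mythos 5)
Your proposal is correct and its skeleton matches the paper's: the same split \eqref{equ::L1} into estimation and approximation error, the same H\"{o}lder bias bound of order $r_n^d\overline{h}_{0,n}^{\alpha}$ on $B_{r_n}$ plus the tail term $\mathrm{P}(B_{r_n}^c)$ (Propositions \ref{ApproximationError} and \ref{L1LInftyRelation}), and the same final three-way balance per tail regime, yielding the stated exponents. Where you genuinely diverge is the estimation error. The paper's Proposition \ref{OracleInequalityL1} does not condition on the transform: it reduces $\sum_j|\mathrm{D}(A_j)-\mathrm{P}(A_j)|$ via a Scheff\'{e}-type set $\widetilde{A}$ to $2\sup_{B\in\Pi_h}|\mathrm{D}(B)-\mathrm{P}(B)|$, where $\Pi_h$ is the class of \emph{all} unions of cells over \emph{all} admissible histogram transforms, and controls this supremum with Bernstein plus the VC-dimension and covering-number bounds of Lemmas \ref{VCindex} and \ref{ScriptBhCoveringNumber}; the price of uniformity over $H$ is the combinatorial machinery. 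You instead fix $H_n$, apply Bernstein cell-by-cell with a union bound over the at most $N\lesssim(r_n/\underline{h}_{0,n})^d$ cells meeting $B_{r_n}$, and use $\sum_j\sqrt{\mathrm{P}(A_j)}\le\sqrt{N}$; since the resulting bound depends on $H_n$ only through the deterministic cap on $N$, integrating the conditional $\mathrm{P}^n$-statement over $\mathrm{P}_H$ gives the same $\nu_n$-probability, so your route is legitimate and more elementary, producing the identical order $\sqrt{r_n^d\log n/(n\underline{h}_{0,n}^d)}$ plus an absorbable $N\log n/n$ term. What the paper's heavier argument buys is a bound holding uniformly over every partition simultaneously (independent of how a bad event might depend on $H$), which is the form its machinery reuses elsewhere (e.g.\ Propositions \ref{OracleInequalityInftyNorm} and \ref{OracleInequality::LOne}); for the present theorem your conditional argument suffices. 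Two minor points to tighten: your claim that $f_{\mathrm{D},H_n}$ and $f_{\mathrm{P},H_n}$ coincide off $\bigcup_j A_j$ tacitly uses $\mu(B_{r_n}^c)=\infty$ (otherwise add one extra Bernstein term for $\eins_{B_{r_n}^c}$, as the paper does), and in case (i) the optimization is a genuine three-way balance of tail, bias and variance rather than "tail below target, then bias equals variance" --- though at the optimum these coincide and your stated exponents are the correct ones.
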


\subsubsection{Convergence Rates for Single Estimators under $L_{\infty}(\mu)$-Norm} \label{sec::linfty_convergence}

We now state our main results on the convergence rates of single histogram transform density estimators under $L_{\infty}(\mu)$-norm.

\begin{theorem}\label{ConvergenceRatesLInfty}
Let the histogram transform $H_n$ be defined as in \eqref{HistogramTransform} with bandwidth $h_n$ satisfying Assumption \ref{assumption::h}. Moreover, assume that $\mathcal{X} \subset B_r \subset \mathbb{R}^d$ and the density function $f \in C^{0,\alpha}$ with $\|f\|_{L_{\infty}(\mu)} < \infty$. Then for all $x \in B^+_{r, \sqrt{d} \cdot \overline{h}_{0,n}}$ and all  $n \geq 1$, by choosing
\begin{align*}
\underline{h}_{0,n} := (\log n / n)^{\frac{1}{2\alpha+d}},
\end{align*}
there holds
\begin{align}\label{RatesInfty}
\|f_{\mathrm{D},H_n} - f\|_{L_{\infty}(\mu)}
\lesssim (\log n/n)^{\frac{\alpha}{2\alpha+d}}
\end{align}	
with probability $\nu_n$ at least $1 - \frac{1}{n}$.
\end{theorem}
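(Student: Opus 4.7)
The plan is to bound the pointwise error via the standard bias-variance decomposition
\begin{align*}
f_{\mathrm{D},H_n}(x) - f(x) = \bigl(f_{\mathrm{D},H_n}(x) - f_{\mathrm{P},H_n}(x)\bigr) + \bigl(f_{\mathrm{P},H_n}(x) - f(x)\bigr),
\end{align*}
and then pick $\underline{h}_{0,n}$ to balance the two pieces; the restriction $x\in B^+_{r,\sqrt d\,\overline{h}_{0,n}}$ serves precisely to guarantee that the Hölder approximation argument applies to every relevant bin.

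First I would handle the \emph{approximation error} $f_{\mathrm{P},H_n}(x)-f(x)$. In the transformed space each bin is a unit cube, so in the input space the bin $A_{H_n}(x)$ has diameter at most $\sqrt d\cdot\overline{h}_{0,n}$ (the largest singular value of $S^{-1}$). Consequently, for $x$ in the shrunken box $B^+_{r,\sqrt d\,\overline{h}_{0,n}}$ the whole bin $A_{H_n}(x)$ sits inside $B_r$, where the $(0,\alpha)$-Hölder bound on $f$ is available. Writing $f_{\mathrm{P},H_n}(x)$ as a cell average as in \eqref{equ::RHdensity} and using $f\in C^{0,\alpha}$, I would conclude
\begin{align*}
|f_{\mathrm{P},H_n}(x) - f(x)| \le c_L\bigl(\sqrt d\,\overline{h}_{0,n}\bigr)^\alpha \lesssim \underline{h}_{0,n}^{\alpha},
\end{align*}
where the last step invokes Assumption \ref{assumption::h} to trade $\overline{h}_{0,n}$ for $\underline{h}_{0,n}$.

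Next I would attack the \emph{estimation error} cell-by-cell. Conditioning on the histogram transform $H_n$, the count $n\mathrm{D}(A_j)$ is Binomial$(n,\mathrm{P}(A_j))$, and since $\|f\|_{L_\infty(\mu)}<\infty$ we have the variance bound $\mathrm{P}(A_j)\le\|f\|_\infty\mu(A_j)\lesssim\overline{h}_{0,n}^d$. Bernstein's inequality then yields, with probability at least $1-2e^{-\tau}$,
\begin{align*}
\frac{|\mathrm{D}(A_j)-\mathrm{P}(A_j)|}{\mu(A_j)} \lesssim \sqrt{\frac{\|f\|_\infty\,\tau}{n\,\mu(A_j)}} + \frac{\tau}{n\,\mu(A_j)} \lesssim \sqrt{\frac{\tau}{n\,\underline{h}_{0,n}^{d}}}+\frac{\tau}{n\,\underline{h}_{0,n}^{d}}.
\end{align*}
The number of bins intersecting $B_r$ is $|\mathcal{I}_{H_n}|\lesssim\overline{h}_{0,n}^{-d}$, which is polynomial in $n$ under the chosen bandwidth, so taking $\tau=C\log n$ for $C$ large and union-bounding over cells (then integrating out the transform) produces, with $\nu_n$-probability at least $1-1/n$,
\begin{align*}
\sup_{x}\, |f_{\mathrm{D},H_n}(x) - f_{\mathrm{P},H_n}(x)| \lesssim \sqrt{\frac{\log n}{n\,\underline{h}_{0,n}^{d}}}
\end{align*}
(the second Bernstein term is dominated once $n\underline{h}_{0,n}^d\gtrsim \log n$, which the chosen $\underline{h}_{0,n}$ satisfies). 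Balancing $\underline{h}_{0,n}^{\alpha}\asymp\sqrt{\log n/(n\underline{h}_{0,n}^{d})}$ gives the stated optimal choice $\underline{h}_{0,n}=(\log n/n)^{1/(2\alpha+d)}$ and the claimed rate $(\log n/n)^{\alpha/(2\alpha+d)}$.

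The main obstacle is that the partition $\pi_{H_n}$ itself is random, so the union bound has to be carried out after conditioning on $H_n$ while making sure all implicit constants (bin count, bin volume upper/lower bounds) depend only on $\overline{h}_{0,n}$ and $\underline{h}_{0,n}$ deterministically—this is exactly why Assumption \ref{assumption::h} is indispensable. A secondary but delicate point is that the Hölder estimate for the bias fails for bins straddling $\partial B_r$; the hypothesis $x\in B^+_{r,\sqrt d\,\overline{h}_{0,n}}$ is precisely what eliminates such boundary bins from consideration, which is why the theorem is stated on that shrunken domain rather than on all of $B_r$.
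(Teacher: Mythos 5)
Your proposal is correct in substance: it uses the same decomposition \eqref{equ::Linfty} and essentially the same H\"older bias estimate as the paper (Proposition~\ref{ApproximationError}\,(ii)), but it handles the estimation error by a genuinely different route. You condition on the realized transform $H_n$, apply Bernstein's inequality to the binomial cell counts of the realized partition, union bound over its polynomially many cells with $\tau\asymp\log n$, and only then integrate over $\mathrm{P}_H$; since the conditional failure probability is at most $1/n$ uniformly in $H$, this indeed delivers the stated $\nu_n$-statement. The paper instead invokes Proposition~\ref{OracleInequalityInftyNorm}: via Lemma~\ref{FundamentalLemma} the estimation error is written as $\sup_{A\in\pi_h}|\mathbb{E}_{\mathrm{D}}\eins_A-\mathbb{E}_{\mathrm{P}}\eins_A|/\mu(A)$ over the class of \emph{all} cells generated by \emph{all} admissible transforms, and this supremum is controlled by an $\varepsilon$-net/covering-number argument resting on the VC bound of Lemma~\ref{VCindex} and Lemma~\ref{ScriptBhCoveringNumber}, so the bound holds with probability $\mathrm{P}^n$ uniformly in $H$. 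Your conditional argument is more elementary and entirely sufficient for this theorem; what the paper's uniform bound buys is reusability, since the very same estimate is applied simultaneously to the $T$ transforms of the ensemble (Proposition~\ref{OracleInequality::LOne}, Theorem~\ref{cor::ensemblerateP}), which a per-$H$ conditional bound would not yield as directly. Two minor points: the number of cells meeting $B_r$ is of order $(r/\underline{h}_{0,n})^d$ rather than $\overline{h}_{0,n}^{-d}$ (harmless here, since both are polynomial in $n$ under Assumption~\ref{assumption::h}), and the restriction to $B^+_{r,\sqrt{d}\cdot\overline{h}_{0,n}}$ is needed not only for the H\"older bias bound but also to secure the volume lower bound $\mu(A_{H_n}(x))\geq\underline{h}_{0,n}^d$ in your Bernstein step, because cells truncated by the boundary of $B_r$ may have arbitrarily small volume.
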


\subsubsection{Convergence Rates for Ensemble Estimators under $L_1(\mu)$-norm}

In this subsection, we state the convergence rates of histogram transform ensembles. First of all, the following theorem establishes the convergence rates of $f_{\mathrm{D},\mathrm{E}}$ in the sense of $L_1(\mu)$-norm under three tail probability distributions.

\begin{theorem}\label{ensembleL1}
Let the histogram transform $H_n$ be defined as in \eqref{HistogramTransform} with bandwidth $h_n$ satisfying Assumption \ref{assumption::h} and $T$ be the number of single estimators contained in the ensembles. Moreover, suppose that the density $f \in C^{0,\alpha}$. We consider the following cases: 
\begin{enumerate}
    \item[(i)] 
$\mathrm{P}(B_r^c) \lesssim r^{- \eta d}$ for some $\eta > 0$ and for all $r \geq 1$;
    \item[(ii)] 
$\mathrm{P}(B_r^c) \lesssim e^{- a r^\eta}$ for some $a > 0$, $\eta > 0$ and for all $r \geq 1$;
    \item[(iii)] 
$\mathrm{P}(B_{r_0}^c) = 0$ for some $r_0 \geq 1$;
\end{enumerate} 
and the sequences $\underline{h}_{0,n}$ are of the following forms:
\begin{enumerate}
    \item[(i)] 
$\underline{h}_{0,n} =  (\log n/n)^{\frac{1+\eta}{(1+\eta)(2\alpha+d)-\alpha}}$;
    \item[(ii)] 
$\underline{h}_{0,n} =  (\log n/n)^{\frac{1}{2\alpha+d}} (\log n)^{- \frac{d}{\eta} \cdot \frac{1}{2\alpha+d}}$;
    \item[(iii)] 
$\underline{h}_{0,n} =  (\log n/n)^{\frac{1}{2\alpha+d}}$;
\end{enumerate}
then with probability $\nu_n$ at least $1 - \frac{T}{n}$, there holds
\begin{align*}
\|f_{\mathrm{D}, \mathrm{E}} - f\|_{L_1(\mu)} \leq \varepsilon_n, 
\end{align*}
where the convergence rates  
\begin{enumerate}
    \item[(i)] 
$\varepsilon_n \lesssim  (\log n/n)^{\frac{\alpha\eta}{(1+\eta)(2\alpha+d)-\alpha}}$;
    \item[(ii)] 
$\varepsilon_n \lesssim  (\log n/n)^{\frac{\alpha}{2\alpha+d}} (\log n)^{\frac{d}{\eta} \cdot \frac{\alpha+d}{2\alpha+d}}$;
    \item[(iii)] 
$\varepsilon_n \lesssim  (\log n/n)^{\frac{\alpha}{2\alpha+d}}$.
\end{enumerate}
\end{theorem}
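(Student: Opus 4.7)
The plan is to reduce the ensemble bound to the single-estimator bound of Theorem \ref{theorem::ConvergenceRatesL1} via convexity and a union bound. Since $f_{\mathrm{D},\mathrm{E}} = \frac{1}{T}\sum_{t=1}^T f_{\mathrm{D},H_t}$ is a convex combination, the triangle inequality in $L_1(\mu)$ gives
\begin{align*}
\|f_{\mathrm{D},\mathrm{E}} - f\|_{L_1(\mu)}
= \Big\|\tfrac{1}{T}\sum_{t=1}^T (f_{\mathrm{D},H_t} - f)\Big\|_{L_1(\mu)}
\leq \tfrac{1}{T}\sum_{t=1}^T \|f_{\mathrm{D},H_t} - f\|_{L_1(\mu)}.
\end{align*}
Hence it suffices to control each summand uniformly in $t$.

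Next, I would enlarge the underlying probability space to $\mathrm{P}^n \otimes \mathrm{P}_H^{\otimes T}$, so that the transforms $H_1,\ldots,H_T$ are i.i.d.\ copies drawn according to $\mathrm{P}_H$ and independent of the sample $D$. For each fixed $t \in \{1,\ldots,T\}$, Theorem \ref{theorem::ConvergenceRatesL1} applied to the marginal $\mathrm{P}^n \otimes \mathrm{P}_{H_t}$ yields, under the assumption on the tail of $\mathrm{P}$ and the prescribed choice of $\underline{h}_{0,n}$ in each of cases (i)--(iii), that the event
\begin{align*}
E_t := \big\{ \|f_{\mathrm{D},H_t} - f\|_{L_1(\mu)} \leq \varepsilon_n \big\}
\end{align*}
has probability at least $1 - 1/n$, with $\varepsilon_n$ exactly the rate stated in the theorem.

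Finally, a union bound gives
\begin{align*}
\nu_n\Big(\bigcap_{t=1}^T E_t\Big) \geq 1 - \sum_{t=1}^T \nu_n(E_t^c) \geq 1 - \frac{T}{n},
\end{align*}
and on this intersection each term in the preceding average is bounded by $\varepsilon_n$, so $\|f_{\mathrm{D},\mathrm{E}} - f\|_{L_1(\mu)} \leq \varepsilon_n$ as claimed.

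There is no real obstacle here: the argument is essentially a textbook convexity-plus-union-bound reduction. The only subtlety worth flagging is notational, namely making explicit that the randomness $\mathrm{P}_H$ in the single-estimator theorem now ranges over a product $\mathrm{P}_H^{\otimes T}$, so that the $T$ events $E_t$ live on the same probability space and the union bound is legitimate; this explains why the confidence level degrades from $1 - 1/n$ to $1 - T/n$. Note also that no independence among the $H_t$ is actually used, only marginal validity of each bound, so the conclusion would survive even if the ensemble were generated with dependent transforms.
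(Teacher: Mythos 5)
Your proposal is correct and follows essentially the same route as the paper's proof: the triangle inequality bounds the ensemble error by the average of the single-estimator errors, Theorem \ref{theorem::ConvergenceRatesL1} gives each term the rate $\varepsilon_n$ with probability at least $1-1/n$, and a union bound over the $T$ transforms yields the $1-T/n$ confidence level. Your extra remark that only the marginal validity of each single-estimator bound (not independence of the $H_t$) is needed is a fair observation, but it does not change the argument, which matches the paper's.
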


\subsubsection{Convergence Rates for Ensemble Estimators under $L_{\infty}(\mu)$-norm}

Finally, we present the convergence rates of the ensembles $f_{\mathrm{D},\mathrm{E}}$ under $L_{\infty}(\mu)$-norm.

\begin{theorem}\label{ensembleLinfty}
Let the histogram transform $H_n$ be defined as in \eqref{HistogramTransform} with bandwidth $h_n$ satisfying Assumption \ref{assumption::h} and $T$ be the number of single estimators contained in the ensembles. Moreover, assume that $\mathcal{X} \subset B_r \subset \mathbb{R}^d$ with $r \in (0, \infty)$ and the density function $f \in C^{0,\alpha}$. 
Then for any $x \in B^+_{r, \sqrt{d} \cdot \overline{h}_{0,n}}$, there exists some $N_0 \in \mathbb{N}$ such that for all $n \geq N_0$, by choosing
\begin{align*}
\underline{h}_{0,n} := (\log n / n)^{\frac{1}{2\alpha+d}},
\end{align*}
there holds
\begin{align}\label{RatesInftyEnsemble}
\|f_{\mathrm{D},\mathrm{E}} - f\|_{L_{\infty}(\mu)}
\lesssim (\log n/n)^{\frac{\alpha}{2\alpha+d}}
\end{align}	
with probability $\nu_n$ at least $1 - \frac{T}{n}$.
\end{theorem}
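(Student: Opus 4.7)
The plan is to reduce Theorem \ref{ensembleLinfty} directly to the single-estimator bound in Theorem \ref{ConvergenceRatesLInfty}, since both the rate and the choice of $\underline{h}_{0,n}$ match. The key observation is that the ensemble is just a uniform average over $T$ histogram transform densities, so by the definition \eqref{histogramensemble} and the triangle inequality one has
\begin{align*}
\bigl\|f_{\mathrm{D},\mathrm{E}} - f\bigr\|_{L_\infty(\mu)}
\;=\; \Bigl\|\tfrac{1}{T}\sum_{t=1}^T (f_{\mathrm{D},H_t} - f)\Bigr\|_{L_\infty(\mu)}
\;\leq\; \frac{1}{T}\sum_{t=1}^T \bigl\|f_{\mathrm{D},H_t} - f\bigr\|_{L_\infty(\mu)},
\end{align*}
where the $L_\infty(\mu)$-norm is understood over the deterministic set $B^+_{r,\sqrt{d}\cdot\overline{h}_{0,n}}$ that appears in Theorem \ref{ConvergenceRatesLInfty}. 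Because this set depends only on $r$ and $\overline{h}_{0,n}$ and not on the individual random transform $H_t$, the same domain is valid for every summand on the right-hand side.

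Next I would apply Theorem \ref{ConvergenceRatesLInfty} to each individual $f_{\mathrm{D},H_t}$. Since the $H_t$'s are i.i.d.\ draws from $\mathrm{P}_H$ and the sample $\mathrm{D}$ is shared, for each fixed $t$ one obtains, with probability at least $1 - 1/n$ under $\nu_n$, the bound
\begin{align*}
\bigl\|f_{\mathrm{D},H_t} - f\bigr\|_{L_\infty(\mu)} \;\lesssim\; (\log n/n)^{\alpha/(2\alpha+d)}.
\end{align*}
A simple union bound over $t=1,\ldots,T$ shows that all $T$ of these events hold simultaneously with probability at least $1 - T/n$, which is exactly the probability bound stated. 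On this joint event, inserting the per-estimator bound into the averaged triangle inequality above yields the same rate $(\log n/n)^{\alpha/(2\alpha+d)}$ for the ensemble, as required. The quantity $N_0$ can be taken to absorb any sample-size thresholds (for instance ensuring $T/n < 1$ so the probability bound is non-vacuous, and ensuring $\overline{h}_{0,n}$ is small enough that $B^+_{r,\sqrt{d}\cdot\overline{h}_{0,n}}$ is non-empty).

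There is no real obstacle in this argument beyond recognizing that the $L_\infty$ bound for the single estimator is ``pointwise uniform'' over a deterministic shrunken box that does not depend on the particular $H_t$, so both the triangle inequality and the union bound go through cleanly. The mild cost is the factor $T$ in the failure probability; a sharper argument exploiting the independence of the transforms could plausibly improve this, but it is unnecessary for the stated rate since the dominant contribution (the approximation error coming from $f \in C^{0,\alpha}$) is the same for every $H_t$ and does not benefit from averaging at the $C^{0,\alpha}$-regularity level.
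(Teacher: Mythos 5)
Your proposal is correct and follows essentially the same route as the paper: the paper likewise bounds the ensemble error by the average of the single-estimator $L_\infty(\mu)$-errors via the triangle inequality, invokes the single-estimator rate of Theorem \ref{ConvergenceRatesLInfty} for each $t$, and obtains the $1-\frac{T}{n}$ probability by exactly the union bound you describe (spelled out in the proof of Theorem \ref{ensembleL1}, to which the paper defers). No substantive difference to report.
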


At first glance, the ensembles and singles seem to share the same convergence rate. However, a closer look shows that the ensembles have even worse rates than single estimators, since they converge with probability $1-\frac{T}{n}$ instead of $1-\frac{1}{n}$ for a relatively large $T$. As a matter of fact, we can say that ensembles and singles converge at the roughly same rate only when $T$ is a constant independent of $n$. As a result, with the above analysis in the space $C^{0,\alpha}$
we fail to explain how ensemble histogram transforms exceed single estimators. Therefore,  we are motivated to search for theoretical clues of this remaining puzzle in the subspace $C^{1,\alpha}$.

\subsection{Results in the Space $C^{1,\alpha}$} \label{sec::LowerBoundSingles}

In this subsection, we turn to the subspace $C^{1,\alpha}$ and provide a result that illustrates the benefits of histogram transform ensembles over single density estimators by both verifying the optimal convergence rate for ensembles and establishing the lower bound of the single estimators.

As Theorem \ref{thm::LowerBoundSingles} shows below, single histogram transform density estimators do not benefit the stronger smoothness assumption and can not achieve the same rate as the ensembles due to the fact that the approximation error of single histogram transform density estimators is highly sub-optimal for such functions.

\subsubsection{Convergence Rates for Ensemble Estimators under $L_{\infty}(\mu)$-norm}

This subsection presents a theorem that verifies the optimal convergence rate of histogram transform ensembles $f_{\mathrm{D},\mathrm{E}}$. Note that this result implies that our density estimator indeed benefits from ensembles.

\begin{theorem}\label{cor::ensemblerateP}
Let the histogram transform $H_n$ be defined as in \eqref{HistogramTransform} with bandwidth $h_n$ satisfying Assumption \ref{assumption::h} and $T_n$ be the number of single estimators contained in the ensembles.	Moreover, assume that $\mathcal{X} \subset B_r \subset \mathbb{R}^d$ with $r\in(0,\infty)$
and the density function $f\in C^{1,\alpha}$. 
Then for all $\tau > 0$ and $x \in B^+_{r, \sqrt{d} \cdot \overline{h}_{0,n}}$,
there exists some $N_0 \in \mathbb{N}$ such that for all $n \geq N_0$, by choosing
\begin{align*}
\underline{h}_{0,n} & := (n/\log n)^{-\frac{1}{2 (1+\alpha) + d}},
\\
T_n & := n^{\frac{2 \alpha}{2 (1+\alpha) + d}},
\end{align*}
there holds
\begin{align}\label{RatesLInftyConeEnsemble}
\|f_{\mathrm{D},\mathrm{E}} - f\|_{L_{\infty}(\mu)}
\lesssim   (\log n/n)^{\frac{1+\alpha}{2(1+\alpha)+d}}
\end{align}	
in the sense of $L_2(\mathrm{P}_H)$
with probability $\mathrm{P}^n$ at least $1 - e^{-\tau}$.
\end{theorem}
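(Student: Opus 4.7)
The plan is to decompose the ensemble error into three pieces and bound each separately, then optimize over $\underline{h}_{0,n}$ and $T_n$. Let $\bar f_{\mathrm{P},T_n}(x) := \frac{1}{T_n}\sum_{t=1}^{T_n} f_{\mathrm{P},H_t}(x)$ be the partial average at the population level, and $\bar f_{\mathrm{P}}(x) := \mathbb{E}_{H \sim \mathrm{P}_H}[f_{\mathrm{P},H}(x)]$ be the infinite-ensemble limit, so that
\[
f_{\mathrm{D},\mathrm{E}} - f = (f_{\mathrm{D},\mathrm{E}} - \bar f_{\mathrm{P},T_n}) + (\bar f_{\mathrm{P},T_n} - \bar f_{\mathrm{P}}) + (\bar f_{\mathrm{P}} - f).
\]

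The key step, and the place where $C^{1,\alpha}$ is exploited, is the approximation error $\bar f_{\mathrm{P}} - f$. For $f \in C^{1,\alpha}$ a Taylor expansion inside each bin gives
\[
f_{\mathrm{P},H}(x) - f(x) = \nabla f(x)^{\top}\!\Big(\tfrac{1}{\mu(A_H(x))}\!\int_{A_H(x)}\!(y-x)\,d\mu(y)\Big) + O\big(\overline{h}_{0,n}^{\,1+\alpha}\big),
\]
where the $O(\overline{h}_{0,n}^{\,1+\alpha})$ remainder uses $\|\nabla f(y)-\nabla f(x)\| \le c_L\|y-x\|^{\alpha}$. The linear term has magnitude $O(\overline{h}_{0,n})$ for a single $H$; the point is that its $\mathrm{P}_H$-average vanishes by the symmetry induced by the uniform translation $b\in[0,1]^d$. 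Transforming to the bin coordinates $z = H'x+b$, the point $z$ is uniform on the unit cube conditional on its bin, so the centroid offset in transformed coordinates is $0$; pulling back through $(H')^{-1}$ and bounding the residual via H\"older continuity of $\nabla f$ and Assumption~\ref{assumption::h} yields $\|\bar f_{\mathrm{P}} - f\|_{L_{\infty}(\mu)} \lesssim \overline{h}_{0,n}^{\,1+\alpha}$.

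The ensemble-variance term $\bar f_{\mathrm{P},T_n} - \bar f_{\mathrm{P}}$ is, for each fixed $x$, an average of $T_n$ i.i.d.\ centered variables each bounded by $\|f\|_{\infty}$ and of variance $\lesssim \overline{h}_{0,n}^{\,2}$ (inherited from the first-order Taylor term before averaging). A Bernstein inequality gives deviation $\lesssim \overline{h}_{0,n}/\sqrt{T_n}$ up to log factors. Because the statement is in the $L_2(\mathrm{P}_H)$ sense, this can be handled directly by computing the variance of the average, avoiding any $x$-union bound. Finally, the estimation error $f_{\mathrm{D},\mathrm{E}}-\bar f_{\mathrm{P},T_n}$ is inherited from the single-estimator analysis of Theorem~\ref{ConvergenceRatesLInfty}: for each $H_t$, Bernstein's inequality applied per bin plus a union bound over the $O(\overline{h}_{0,n}^{-d})$ cells that meet $B_r$ yields a per-$t$ bound $\lesssim \sqrt{\log n /(n \overline{h}_{0,n}^{\,d})}$ with failure probability $\lesssim n^{-1}$; linearity of averaging preserves this rate, and a conversion of the failure probability to $e^{-\tau}$ is standard.

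Plugging in $\underline{h}_{0,n} \sim (n/\log n)^{-1/(2(1+\alpha)+d)}$ balances the approximation rate $\overline{h}_{0,n}^{\,1+\alpha}$ against the estimation rate $\sqrt{\log n/(n\overline{h}_{0,n}^{\,d})}$, both equalling $(\log n/n)^{(1+\alpha)/(2(1+\alpha)+d)}$; choosing $T_n \sim n^{2\alpha/(2(1+\alpha)+d)}$ forces $\overline{h}_{0,n}/\sqrt{T_n}$ below the same rate. I expect the main obstacle to be the rigorous verification that the first-order bias is killed by the $\mathrm{P}_H$-average. One must track how the random rotation and anisotropic stretching interact with the uniform translation when pulling back from transformed to input coordinates, and show that the contribution of the rotation--stretching does not resurrect an $O(\overline{h}_{0,n})$ term. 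Once this cancellation is established, the rest is concentration plus the bandwidth--$T_n$ balancing already indicated.
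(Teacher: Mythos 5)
Your proposal follows essentially the same route as the paper: the paper combines Proposition \ref{ApproximationError::LTwo} (an exact bias--variance decomposition in $L_2(\mathrm{P}_H)$, where the uniform translation $b$ makes the first-order Taylor term average to zero, leaving a bias of order $\overline{h}_{0,n}^{1+\alpha}$ plus an ensemble-variance term of order $\overline{h}_{0,n}^2/T$) with Proposition \ref{OracleInequality::LOne} (Bernstein's inequality plus a covering-number argument giving an estimation error $\lesssim \sqrt{\log n/(n\underline{h}_{0,n}^d)}$ with probability $1-e^{-\tau}$), and then balances $\underline{h}_{0,n}$ and $T_n$ exactly as you do. The only minor deviation is that the paper controls the estimation error uniformly over the whole class of admissible cells (so no union bound over the $T_n$ transforms is needed), whereas your per-$t$ bound with a union over $t$ merely replaces $\tau$ by $\tau+\log T_n\lesssim \tau+\log n$ and does not change the rate.
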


\subsubsection{Lower Bound of Single Estimators under $L_{\infty}(\mu)$-norm}

As mentioned at the beginning of this subsection, we now present a theorem to illustrate the benefit of ensembles
over single estimators.

\begin{theorem} \label{thm::LowerBoundSingles}
Let the histogram transform $H_n$ be defined as in \eqref{HistogramTransform} with bandwidth $h_n$ satisfying Assumption \ref{assumption::h}. Furthermore, let the density function $f \in C^{1,\alpha}$. For fixed constants $\underline{c}'_f, \underline{c}_f, \overline{c}_f \in (0, \infty)$, let $\mathcal{A}_f$ denote the set 
\begin{align} \label{DegenerateSetF}
\mathcal{A}_f := 
\biggl\{ x \in \mathbb{R}^d : 
\biggl| \frac{\partial f(x)}{\partial x_i} \biggr| \geq \underline{c}'_f \text{ for all } i = 1, \ldots, d  
\text{ and }
f(x) \in [\underline{c}_f, \overline{c}_f]\biggr\}.
\end{align}
Then for all $x \in B_{r, \sqrt{d} \cdot \overline{h}_0}^+ \cap \mathcal{A}_f$ and all $n > N_0$ with
\begin{align} \label{MinimalNumber}
N_0 := \min \biggl\{
n \in \mathbb{N} :
\overline{h}_{0,n} \leq  
\min \biggl\{ 
\biggl( \frac{\sqrt{d} \underline{c}'_f c_{0,n}}{4 \sqrt{3} c_L} \biggr)^{\frac{1}{\alpha}},
\biggl( \frac{d \sqrt{d}}{2} \biggr)^{\frac{1}{\alpha}},
\frac{\underline{c}_f}{2 d \sqrt{d} c_L},
\biggl( \frac{1}{4 \overline{c}_f} \biggr)^{\frac{1}{d}}
\biggr\} \biggr\},
\end{align}
by choosing 
\begin{align*}
	\overline{h}_{0,n} := n^{-\frac{1}{2+d}}, 
\end{align*}
there holds
\begin{align} \label{RatesLInftyConeSingle}
\|f_{\mathrm{D},H_n} - f\|_{L_{\infty}(\mu)}
\gtrsim n^{-\frac{1}{2+d}}
\end{align}
in the sense of $L_2(\nu_n)$-norm.
\end{theorem}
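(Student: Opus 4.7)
The plan is to reduce the $L_{\infty}(\mu)$ lower bound to a pointwise lower bound at a single $x$, and then to exploit the uniform translation $b$ to show that the bias of $f_{\mathrm{P},H_n}(x)$ is of exact order $\overline{h}_{0,n}$ in $L_2(\mathrm{P}_H)$, rather than shrinking to $\overline{h}_{0,n}^{1+\alpha}$ as one might naively hope from $C^{1,\alpha}$-smoothness. The variance part of the pointwise mean-squared error is simply discarded since it is nonnegative.

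Fix any $x \in B_{r, \sqrt{d}\,\overline{h}_0}^+ \cap \mathcal{A}_f$; the interior condition guarantees $A := A_{H_n}(x) \subset B_r$ for every realisation of $H_n$. Combining $\|g\|_{L_\infty(\mu)}^2 \geq g(x)^2$ with the bias--variance decomposition conditional on $H_n$ (noting $\mathbb{E}_{\mathrm{P}^n}[f_{\mathrm{D},H_n}(x)\mid H_n] = f_{\mathrm{P},H_n}(x)$) yields
\begin{align*}
\mathbb{E}_{\nu_n}\|f_{\mathrm{D},H_n} - f\|_{L_\infty(\mu)}^2
\geq \mathbb{E}_{\mathrm{P}_H}\bigl(f_{\mathrm{P},H_n}(x) - f(x)\bigr)^2,
\end{align*}
so it suffices to prove $\mathbb{E}_{\mathrm{P}_H}(f_{\mathrm{P},H_n}(x) - f(x))^2 \gtrsim \overline{h}_{0,n}^2$. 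I Taylor-expand: since $f \in C^{1,\alpha}$, for every $x'\in A$, $f(x') - f(x) = \nabla f(x)\cdot(x'-x) + r(x',x)$ with $|r(x',x)| \leq c_L\|x'-x\|^{1+\alpha} \leq c_L(\sqrt{d}\,\overline{h}_{0,n})^{1+\alpha}$. Averaging against \eqref{equ::RHdensity} gives $f_{\mathrm{P},H_n}(x) - f(x) = \nabla f(x)\cdot \bar c + \varepsilon$, where $\bar c := \mu(A)^{-1}\int_A (x'-x)\,d\mu(x')$ is the centroid offset and $|\varepsilon| \leq c_L d^{(1+\alpha)/2}\overline{h}_{0,n}^{1+\alpha}$.

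The crux is the law of $\bar c$ under $\mathrm{P}_H = \mathrm{P}_R \otimes \mathrm{P}_S \otimes \mathrm{P}_b$. In the transformed coordinates $A'_H(x) = [\lfloor H(x)\rfloor, \lfloor H(x)\rfloor+1)^d$ has centroid $\lfloor H(x)\rfloor + \tfrac{1}{2}\mathbf{1}$, so pulling back through $H^{-1}$ gives $\bar c = S^{-1}R^{-1}\bigl(\tfrac{1}{2}\mathbf{1} - \{H(x)\}\bigr)$. Because $b \sim U([0,1]^d)$ is independent of $(R,S)$, the fractional part $\{H(x)\} = \{RSx + b\}$ is uniform on $[0,1]^d$ conditional on $(R,S)$; hence $\xi := \tfrac{1}{2}\mathbf{1} - \{H(x)\} \sim U([-1/2,1/2]^d)$ is independent of $(R,S)$ with covariance $\tfrac{1}{12}I_d$. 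Using the orthogonality of $R$,
\begin{align*}
\mathbb{E}_{\mathrm{P}_H}|\nabla f(x)\cdot \bar c|^2
= \tfrac{1}{12}\,\mathbb{E}_{\mathrm{P}_{R,S}}\|R S^{-1}\nabla f(x)\|^2
= \tfrac{1}{12}\,\mathbb{E}_{\mathrm{P}_S}\sum_{i=1}^d h_i^2(\partial_i f(x))^2
\geq \tfrac{d}{12}(\underline{c}'_f)^2 c_{0,n}^2\,\overline{h}_{0,n}^2,
\end{align*}
where the final step invokes $|\partial_i f(x)| \geq \underline{c}'_f$ (from $x\in\mathcal{A}_f$) together with $h_i \geq \underline{h}_{0,n} \geq c_{0,n}\overline{h}_{0,n}$. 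Expanding the square and applying Cauchy--Schwarz gives $\mathbb{E}_{\mathrm{P}_H}(\nabla f(x)\cdot \bar c + \varepsilon)^2 \geq \mathbb{E}_{\mathrm{P}_H}|\nabla f(x)\cdot \bar c|^2 - 2\bigl(\mathbb{E}_{\mathrm{P}_H}|\nabla f(x)\cdot \bar c|^2 \cdot \mathbb{E}_{\mathrm{P}_H}\varepsilon^2\bigr)^{1/2}$. The first bound in \eqref{MinimalNumber} is calibrated exactly so that the cross term of order $\overline{h}_{0,n}^{2+\alpha}$ is at most half of the principal $\overline{h}_{0,n}^2$ term, leaving $\mathbb{E}_{\mathrm{P}_H}(f_{\mathrm{P},H_n}(x) - f(x))^2 \gtrsim \overline{h}_{0,n}^2$; the choice $\overline{h}_{0,n} = n^{-1/(2+d)}$ then delivers \eqref{RatesLInftyConeSingle}.

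The main obstacle is the centroid distribution identity. One must (a) carefully unwind $\bar c$ in terms of the affine components $(R,S,b)$ of $H$, (b) recognise that $b$ alone, acting by fractional parts, makes $\{H(x)\}$ uniform on the torus $[0,1]^d$ irrespective of $(R,S)$, and (c) use the orthogonality of $R$ to strip its influence from the quadratic form, leaving the coordinate-separable sum $\sum_i h_i^2(\partial_i f(x))^2$ that $\mathcal{A}_f$ controls below. Without the coordinate-wise lower bound $|\partial_i f(x)| \geq \underline{c}'_f$ encoded in $\mathcal{A}_f$, the diagonal stretching could dampen $\nabla f$ in the dominant direction; this coordinate-wise control is precisely what prevents the single-estimator bias from collapsing to higher order, in contrast to the ensemble of Theorem \ref{cor::ensemblerateP}, whose averaging over $\mathrm{P}_H$ suppresses the linear-in-$h$ part of the bias and thereby attains the faster $C^{1,\alpha}$ rate.
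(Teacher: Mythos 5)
Your proposal is correct, and its core computation—the lower bound on the bias $\mathbb{E}_{\mathrm{P}_H}\bigl(f_{\mathrm{P},H_n}(x)-f(x)\bigr)^2 \gtrsim \overline{h}_{0,n}^2$ via the affine pull-back of the transformed cell, the uniformity of the fractional part $\{H(x)\}$ induced by the translation $b$, the orthogonality of $R$ reducing the quadratic form to $\tfrac{1}{12}\sum_i h_i^2(\partial_i f(x))^2$, and the coordinate-wise control from $\mathcal{A}_f$—is exactly the paper's Proposition \ref{ApproximationError::LTwoCounter} (built on Lemma \ref{binset}), with the Taylor remainder absorbed by the first smallness condition. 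Where you genuinely depart from the paper is in the overall structure: the paper uses the exact decomposition \eqref{equ::L2Decomposition} and lower-bounds \emph{both} terms, proving in addition an estimation-error lower bound $\mathbb{E}_{\mathrm{P}^n}(f_{\mathrm{D},H_n}(x)-f_{\mathrm{P},H_n}(x))^2 \geq \underline{c}_f/(4n\overline{h}_{0,n}^d)$ (Proposition \ref{OracleInequality::LTwoCounter}), whereas you discard the variance term, which is legitimate precisely because the conditional unbiasedness $\mathbb{E}_{\mathrm{P}^n}[f_{\mathrm{D},H_n}(x)\mid H_n]=f_{\mathrm{P},H_n}(x)$ makes the decomposition exact and because the prescribed choice $\overline{h}_{0,n}=n^{-1/(2+d)}$ already lets the bias alone deliver the rate $n^{-1/(2+d)}$. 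Your route is therefore shorter and shows that the last three constraints in \eqref{MinimalNumber} (which the paper needs only to sandwich $\mathrm{P}(A_H(x))$ for the variance bound) are not required for the stated conclusion; the paper's longer route buys the additional information that the error cannot be improved by re-balancing the bandwidth, since both sides of the bias--variance trade-off are bounded below. Two cosmetic caveats: your remainder bound carries the factor $d^{(1+\alpha)/2}$, so the cross-term control strictly requires a slightly smaller threshold than the first entry of \eqref{MinimalNumber} (the paper's own bound $c_L\overline{h}_0^{1+\alpha}$ silently drops the same $\sqrt{d}$ factor, so this is a shared constant-level slack, harmless for a $\gtrsim$ statement); and the step $\|g\|_{L_\infty(\mu)}\geq |g(x)|$ should be read modulo the $\mathrm{P}_H$-null event that $x$ falls on a cell boundary, exactly as the paper implicitly does when it passes from the pointwise bound to \eqref{RatesLInftyConeSingle}.
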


Obviously, for all $\alpha \in (0,1]$, the upper bound of ensemble histogram transforms \eqref{RatesLInftyConeEnsemble}
in Theorem \ref{cor::ensemblerateP} is essentially smaller than the lower bound \eqref{RatesLInftyConeSingle} in Theorem \ref{thm::LowerBoundSingles} for single histogram transforms. This exactly illustrates the benefits for the convergence rates of ensemble predictors over single estimators.

\subsection{Comments and Discussions} \label{sec::Comments}

Through a statistical learning treatment, in this paper we investigated and explored the histogram transform ensembles for density estimation which takes full advantage of the diversity induced by random rotation transform and the ensemble nature. In this section, we present some comments and discussions on the obtained theoretical results on the consistency and convergence rates of $f_{\mathrm{D}, \mathrm{E}}$ and compares them with existing studies.

First of all, we point out that in this paper, theoretical analysis on convergence rates is conducted in the H\"{o}lder spaces $C^{0,\alpha}$ and $C^{1,\alpha}$ under different norms respectively. For the space $C^{0, \alpha}$, the universal consistency is obtained under $L_1(\mu)$-norm. Furthermore, we highlight that the convergence rate $O((\log n / n)^{\alpha/(2\alpha + d)})$ derived in Theorem \ref{theorem::ConvergenceRatesL1} and Theorem \ref{ConvergenceRatesLInfty} in the sense of $L_1(\mu)$-norm and $L_{\infty}(\mu)$-norm are both optimal up to a logarithmic factor and of strong type ``with high probability $\nu_n$'' due to the use of the Bernstein's inequality that takes into account the variance information of the random variables. However, we fail to show the benefits of histogram transform ensembles $f_{\mathrm{P}, \mathrm{E}}$ over single histogram transform density estimators $f_{\mathrm{P}, H}$ theoretically. Therefore, we turn to the subspace $C^{1, \alpha}$ which is confined to a class of smoother functions and prove that in the space $C^{1,\alpha}$ ensemble estimators $f_{\mathrm{P}, \mathrm{E}}$ converges in type of ``in expectation w.r.t.~$\mathrm{P}_H$ and with high probability $\mathrm{P}^n$'' faster than single estimators $f_{\mathrm{P}, H}$ in weaker type of ``in expectation w.r.t.~$\nu_n$''. More precisely, Theorem \ref{cor::ensemblerateP} shows that the histogram transform ensembles $f_{\mathrm{P}, \mathrm{E}}$ can attain the almost optimal rate $O((n/\log n)^{-(1+\alpha)/(2(1+\alpha)+d)})$ whereas Theorem \ref{thm::LowerBoundSingles} shows that a single histogram transform density estimator fails to achieve this rate whose lower bound is of the order $O(n^{-1/(2+d)})$.

Recall that our study focuses on histogram partitions by applying random histogram transform ensembles to address the density estimation problem, see \cite{Ezequiel2013HT}. As a result, comparisons on consistency and convergence rates with other existing theoretical studies of histogram density estimators should be conducted. First of all, \cite{lugosi1996consistency}  investigated histogram density estimations based on data-dependent partitions and established strong consistency in the sense of $L_1(\mu)$-norm under general sufficient conditions which turns out to be stronger than ours. Concerning with the convergence rate of the histogram-based density estimators, \cite{klemela2009multivariate}  presents a multivariate histograms based on data-dependent partitions which are obtained by minimizing a complexity-penalized error criterion. The convergence rates obtained in his study are of the type $O((\log n/n)^{2\sigma/(2\sigma + 1)})$ with respect to the weaker expected $L_2$-norm under the assumption that function belongs to an anisotropic Besov class. By analyzing the proposed sieve maximum likelihood density estimator in \cite{liu2014multivariate} and further developed by a Bayesian method in \cite{liu2015multivariate}, the concentration rate of the type $O(n^{-r/(2r+1)})$ up to a logarithmic factor which does not directly depend on the dimension. However, note that the satisfying dimension-free conclusion heavily relies on the strong assumptions imposed on the approximation errors which is discussed in detail and given the upper bound in our study. \cite{Density2014} presented an algorithm taking advantage of discrepancy, a concept originates from Quasi Monte Carlo analysis, to control the partition process and a piecewise constant estimator defined on a binary partition. The resulting algorithm is proved to have the optimal convergence rate of order $O(n^{-1/2})$ in certain Monte Carlo sense under more restrictive constraints on regions containing larger proportion of the sample. Moreover, there exist a flurry of studies in the literature exploring other density estimation methods rather than histogram estimators. Several representative examples are listed below: under the same assumptions on the $\alpha$-H\"{o}lder continuity of $f$, the convergence rate for the kernel density estimator derived in \cite{jiang2017fog} is as fast as ours as well. \cite{Liu2011Forest} formed kernel density estimates of bivariate and univariate marginals supposed in the $\alpha$-H\"{o}lder class which turns out to be stronger assumptions than ours, and applied Kruskal’s algorithm to estimate the optimal forest on held out data. Formally speaking, the convergence rate 
$O ((\log n)^{1/2} ( (k^* + \widehat{k}) n^{-\beta / (2+2\beta)} + n^{- \beta / (1+2\beta)}))$ in probability are further verified.

\section{Error Analysis} \label{sec::error_analysis}

In this section, we conduct error analysis for the single and ensemble density estimators $f_{\mathrm{D}, H}$ and $f_{\mathrm{D}, \mathrm{E}}$ in the H\"{o}lder spaces $C^{0,\alpha}$ and $C^{1,\alpha}$.

\subsection{Analysis in the Space $C^{0,\alpha}$}\label{sec::subsec::C0}

By introducing the function $f_{\mathrm{P},H}$ in \eqref{equ::RHdensity}, obviously there holds
\begin{align}\label{equ::L1}
\| f_{\mathrm{D},H} - f \|_{L_1(\mu)} 
\leq \| f_{\mathrm{D},H} - f_{\mathrm{P},H} \|_{L_1(\mu)} 
      + \| f_{\mathrm{P},H} - f \|_{L_1(\mu)}.
\end{align}
The consistency and convergence analysis in our study will be mainly conducted in the sense of $L_1(\mu)$-norm with the help of inequality \eqref{equ::RHdensity}. Besides, for some specific case, i.e., when the density $f$ is compactly supported, we are also concerned with the consistency and convergence of $f_{\mathrm{D},H}$ to $f$ under $L_{\infty}(\mu)$-norm. In this case, there also holds the following inequality
\begin{align}\label{equ::Linfty}
\| f_{\mathrm{D},H} - f \|_{L_{\infty}(\mu)}
 \leq \| f_{\mathrm{D},H} - f_{\mathrm{P},H} \|_{L_{\infty}(\mu)} 
       + \| f_{\mathrm{P},H} - f \|_{L_{\infty}(\mu)}.
\end{align}

It is easy to see that the first error term on the right-hand side of \eqref{equ::L1} or \eqref{equ::Linfty} is data-dependent due to the empirical measure $\mathrm{D}$ while the second one is deterministic because of it's sampling-free. Loosely speaking, the first error term corresponds to the variance of the estimator $f_{\mathrm{D},H}$, while the second one can be treated as its bias although \eqref{equ::L1} or \eqref{equ::Linfty} is not an exact error decomposition. In our study, we proceed with the consistency and convergence analysis on $f_{\mathrm{D},H}$ by bounding the two error terms, respectively.

\subsubsection{Bounding the Approximation Error}\label{subsubsec::AppError0}

Our first theoretical result on bounding the approximation error term shows that, the $L_1(\mu)$-distance between $f_{\mathrm{P},H}$ and $f$ can be arbitrarily small by choosing the bandwidth appropriately which leads to the result of universal consistency. Moreover, under the assumption $f \in C^{0,\alpha}$, the $L_{\infty}(\mu)$-distance between the two quantities is shown to decrease polynomially. As a result, convergence rates under $L_{\infty}(\mu)$-norm can be derived.

\begin{proposition} \label{ApproximationError}
Let the histogram transform $H$ be defined as in \eqref{HistogramTransform} with bandwidth $h$ satisfying Assumption \ref{assumption::h}.
\begin{itemize}
    \item[(i)]
For any $\varepsilon > 0$, there exists $h_{\varepsilon} > 0$ such that for $\overline{h}_0  \leq h_{\varepsilon}$, there holds
\begin{align*}
\|f_{\mathrm{P},H} - f\|_{L_1(\mu)} \leq \varepsilon.
\end{align*}
    \item[(ii)]
If $f \in C^{0,\alpha}$, then for $\overline{h}_0 \leq (\varepsilon/c_L)^{1/\alpha}/d$ with $c_L$ as in Definition \ref{def::Cp}, there holds
\begin{align*}
\|f_{\mathrm{P},H} - f\|_{L_{\infty}(\mu)} \leq \varepsilon.
\end{align*}
\end{itemize}
\end{proposition}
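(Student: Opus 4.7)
My plan is to work cell-by-cell. For $x$ in any cell $A_j$ of the partition induced by $H$, the identity
$$
f_{\mathrm{P},H}(x) - f(x) = \frac{1}{\mu(A_j)}\int_{A_j} \bigl(f(x') - f(x)\bigr) \, d\mu(x')
$$
reduces the claim to bounding $|f(x') - f(x)|$ for $x,x'$ in a common cell. The geometric input is that $A_j = H^{-1}(C_j)$ for a unit axis-aligned cube $C_j$ in the transformed space. Since $H^{-1} = S^{-1}R^{\top}(\cdot - b)$, rotations preserve distances and $S^{-1}$ rescales coordinate $i$ by $h_i \le \overline{h}_0$, so the cell diameter satisfies $\mathrm{diam}(A_j) \le (\sum_i h_i^2)^{1/2} \le \sqrt{d}\,\overline{h}_0 \le d\,\overline{h}_0$. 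Applying the H\"older hypothesis gives $|f_{\mathrm{P},H}(x) - f(x)| \le c_L (d\,\overline{h}_0)^{\alpha} \le \varepsilon$ under the prescribed threshold $\overline{h}_0 \le (\varepsilon/c_L)^{1/\alpha}/d$, and taking the supremum over $x$ finishes part (ii).

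\noindent\textbf{Part (i): density plus contraction.}
Since no smoothness is assumed on $f$, I would combine two standard ingredients. First, restricted to the bounded region $B_r$ the operator $T_H : f \mapsto f_{\mathrm{P},H}$ is exactly the conditional expectation with respect to the $\sigma$-algebra generated by $\pi_H$, hence an $L_1(B_r,\mu)$-contraction: $\|T_H \varphi\|_{L_1(B_r)} \le \|\varphi\|_{L_1(B_r)}$. Second, $C_c(\mathbb{R}^d)$ is dense in $L_1(\mu)$. The plan is to split
$$
\|f_{\mathrm{P},H}-f\|_{L_1(\mu)} \le P(B_r^c) + \|f_{\mathrm{P},H}-f\|_{L_1(B_r)},
$$
noting that on the infinite outer cell $A_0 = B_r^c$ one has $f_{\mathrm{P},H}\equiv 0$, so the first summand equals $P(B_r^c)$, which I make at most $\varepsilon/3$ by enlarging $r$ via tightness of $P$. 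Then I pick $g\in C_c(\mathbb{R}^d)$ with $\mathrm{supp}(g)\subset B_r$ and $\|f-g\|_{L_1}\le\varepsilon/9$; the contraction together with the triangle inequality yields $\|T_H f - f\|_{L_1(B_r)} \le 2\|f-g\|_{L_1}+\|T_H g-g\|_{L_1(B_r)}$. For the remaining piece I apply the part-(ii) cell-diameter bound to the uniformly continuous $g$, with its modulus of continuity $\omega_g$ replacing the H\"older seminorm, so that $\|T_H g-g\|_{L_\infty(B_r)}\le \omega_g(d\,\overline{h}_0)$ and hence $\|T_H g-g\|_{L_1(B_r)}\le \mu(B_r)\,\omega_g(d\,\overline{h}_0)\le\varepsilon/3$ for $\overline{h}_0$ small enough. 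Summing the three contributions gives $\varepsilon$.

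\noindent\textbf{Main obstacle.}
The delicate point is the interplay between the tail cell $A_0$ and the single threshold $h_\varepsilon$ advertised in the statement: one must implicitly also choose a radius $r_\varepsilon$ for the bounded partition region, because $T_H$ is only naturally a contraction on $L_1(B_r,\mu)$ (where $\mu$ is finite and $T_H$ is a genuine conditional expectation), not on $L_1(\mathbb{R}^d)$. I would resolve this by fixing $r_\varepsilon$ first from tightness of $P$, then shrinking $\overline{h}_0$ based on $\mu(B_{r_\varepsilon})$ and the modulus $\omega_g$, so that $h_\varepsilon$ absorbs the resulting dependence. Part (ii) is by comparison routine once the diameter calculation is in place; the only minor choice there is to replace the sharp constant $\sqrt{d}$ by the slightly looser but cleaner $d$ that matches the threshold stated in the proposition.
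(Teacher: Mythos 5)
Your proof is correct and follows essentially the same route as the paper: part (ii) is the identical cell-diameter plus H\"older computation, and part (i) rests on the same three ingredients the paper uses, namely density of $C_c(\mathbb{R}^d)$ in $L_1$, the $L_1$-contraction of the cell-averaging operator (which the paper verifies cell by cell rather than naming it a conditional expectation), and uniform continuity combined with the $d\,\overline{h}_0$ diameter bound. The only cosmetic difference is that you split off the tail $\mathrm{P}(B_r^c)$ explicitly via tightness, whereas the paper absorbs the tail into the $L_1$-approximation of $f$ by the compactly supported $\tilde{f}$.
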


We now show that the $L_1(\mu)$-distance between $f_{\mathrm{P},H}$ and $f$ can be upper bounded by their difference on a compact domain $B_r$ together with their difference outside the domain which is the crucial for obtaining the convergence rates under $L_1(\mu)$-norm.

\begin{proposition} \label{L1LInftyRelation}
Let the histogram transform $H$ be defined as in \eqref{HistogramTransform}. Then, for all $r \geq 1$, we have
\begin{align*}
\|f_{\mathrm{P},H} - f\|_{L_1(\mu)} 
\lesssim 2^d r^d \|(f_{\mathrm{P},H} - f) \eins_{B_r}\|_{L_{\infty}(\mu)} + 2 \mathrm{P}(B_r^c).
\end{align*}
\end{proposition}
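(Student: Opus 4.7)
The plan is to decompose the $L_1(\mu)$-integral into its contributions inside and outside the box $B_r$, and handle each part with a different tool. Write
\begin{align*}
\|f_{\mathrm{P},H} - f\|_{L_1(\mu)}
= \int_{B_r} |f_{\mathrm{P},H} - f| \, d\mu
+ \int_{B_r^c} |f_{\mathrm{P},H} - f| \, d\mu.
\end{align*}
For the interior term I use the elementary bound that converts an $L_1$-integral over a bounded region into a sup-norm estimate weighted by the Lebesgue volume. Since $\mu(B_r) = (2r)^d = 2^d r^d$, this immediately gives
\begin{align*}
\int_{B_r} |f_{\mathrm{P},H} - f| \, d\mu
\leq \mu(B_r) \, \|(f_{\mathrm{P},H} - f)\eins_{B_r}\|_{L_\infty(\mu)}
= 2^d r^d \|(f_{\mathrm{P},H} - f)\eins_{B_r}\|_{L_\infty(\mu)},
\end{align*}
which produces the first term on the right-hand side of the claim.

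For the exterior term the triangle inequality gives
\begin{align*}
\int_{B_r^c} |f_{\mathrm{P},H} - f| \, d\mu
\leq \int_{B_r^c} f_{\mathrm{P},H} \, d\mu + \int_{B_r^c} f \, d\mu,
\end{align*}
and the second summand is exactly $\mathrm{P}(B_r^c)$. To control the first summand I use that $f_{\mathrm{P},H}$ is itself a probability density on $\mathbb{R}^d$ (as verified right after \eqref{RHdensity}), so $\int_{\mathbb{R}^d} f_{\mathrm{P},H} \, d\mu = 1$. Since $\{A_j\}_{j \in \mathcal{I}_H}$ partitions $B_r$ and $f_{\mathrm{P},H}$ takes the constant value $\mathrm{P}(A_j)/\mu(A_j)$ on each $A_j$, I compute
\begin{align*}
\int_{B_r} f_{\mathrm{P},H} \, d\mu
= \sum_{j \in \mathcal{I}_H} \frac{\mathrm{P}(A_j)}{\mu(A_j)} \cdot \mu(A_j)
= \sum_{j \in \mathcal{I}_H} \mathrm{P}(A_j)
= \mathrm{P}(B_r) = 1 - \mathrm{P}(B_r^c),
\end{align*}
which forces $\int_{B_r^c} f_{\mathrm{P},H} \, d\mu = \mathrm{P}(B_r^c)$. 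Adding the two contributions bounds the exterior integral by $2 \mathrm{P}(B_r^c)$, and combining with the interior bound finishes the argument.

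The main obstacle is the small piece of bookkeeping in step three, namely justifying that $f_{\mathrm{P},H}$ distributes total mass exactly $\mathrm{P}(B_r)$ inside $B_r$. This step rests on two structural facts baked into the construction of Section \ref{sec::HTE}: first, the index set $\mathcal{I}_H$ captures \emph{every} transformed cell meeting $B_r$, and second, the truncated cells $A_j = A'_j \cap B_r$ form a genuine disjoint partition of $B_r$, so the telescoping $\sum_{j \in \mathcal{I}_H} \mathrm{P}(A_j) = \mathrm{P}(\bigcup_{j \in \mathcal{I}_H} A_j) = \mathrm{P}(B_r)$ is legitimate. Once this identity is established, the rest of the proof is a one-line triangle inequality, so no probabilistic or analytic machinery beyond basic measure theory is required.
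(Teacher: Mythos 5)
Your proposal is correct and follows essentially the same route as the paper: the same split of the $L_1(\mu)$-norm over $B_r$ and $B_r^c$, the same volume-times-sup bound on the interior, and the same triangle inequality plus the identity $\int_{B_r^c} f_{\mathrm{P},H}\,d\mu = \mathrm{P}(B_r^c)$ for the tail. The only (inessential) difference is how that identity is obtained: the paper integrates the explicit piecewise-constant formula for $f_{\mathrm{P},H}$ on the cell $B_r^c$ directly, while you deduce it from total mass one of $f_{\mathrm{P},H}$ minus the mass $\mathrm{P}(B_r)$ carried by the cells partitioning $B_r$ — both rest on the same structural facts and yield the same bound.
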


\subsubsection{Bounding the Estimation Error} \label{subsubsction::VCDimension}

Recall that for any histogram transform $H$, the set $\pi'_H:= \{ A_j \}_{j \in \mathcal{I}_H\cup \{0\}} $ forms a partition of $\mathbb{R}^d$. The following lemma shows that both of the $\|\cdot\|_{L_1(\mu)}$ and $\|\cdot\|_{L_{\infty}(\mu)}$-distance between $f_{\mathrm{D},H}$ and $f_{\mathrm{P},H}$ can be estimated by the quantities $|\mathbb{E}_{\mathrm{D}} \eins_{A_j} - \mathbb{E}_{\mathrm{P}} \eins_{A_j}|$.

\begin{lemma} \label{FundamentalLemma}
Let the histogram transform $H$ be defined as in \eqref{HistogramTransform}. Then the following equalities hold:
\begin{itemize}
    \item[(i)]
$\displaystyle \|f_{\mathrm{D},H} - f_{\mathrm{P},H}\|_{L_1(\mu)} = \sum_{j\in \mathcal{I}_H \cup \{0\}} |\mathbb{E}_{\mathrm{D}} \eins_{A_j} - \mathbb{E}_{\mathrm{P}} \eins_{A_j}|$.
    \item[(ii)]
$\displaystyle \|f_{\mathrm{D},H} - f_{\mathrm{P},H}\|_{L_{\infty}(\mu)} = \sup_{j \in \mathcal{I}_H \cup \{0\}} \frac{|\mathbb{E}_{\mathrm{D}} \eins_{A_j} - \mathbb{E}_{\mathrm{P}} \eins_{A_j}|}{\mu(A_j)}$.
\end{itemize}
\end{lemma}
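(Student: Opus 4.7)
The plan is to reduce both equalities to a direct computation by exploiting the fact that $f_{\mathrm{D},H} - f_{\mathrm{P},H}$ is piecewise constant on the partition $\pi'_H$. Since the cells $\{A_j\}_{j \in \mathcal{I}_H \cup \{0\}}$ are pairwise disjoint and cover $\mathbb{R}^d$, every $x$ lies in exactly one $A_j$. Combining the definitions \eqref{equ::RHdensity} and \eqref{equ::EMRHdensity} with the identities $\mathrm{D}(A_j) = \mathbb{E}_{\mathrm{D}} \eins_{A_j}$ and $\mathrm{P}(A_j) = \mathbb{E}_{\mathrm{P}} \eins_{A_j}$, I would first write
\begin{align*}
f_{\mathrm{D},H}(x) - f_{\mathrm{P},H}(x)
= \sum_{j \in \mathcal{I}_H \cup \{0\}} \frac{\mathbb{E}_{\mathrm{D}} \eins_{A_j} - \mathbb{E}_{\mathrm{P}} \eins_{A_j}}{\mu(A_j)} \, \eins_{A_j}(x),
\end{align*}
with the convention $1/\mu(A_0) := 0$ for the unbounded outer cell $A_0 = B_r^c$, so that the difference vanishes on $B_r^c$. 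Because the supports $\{A_j\}$ are disjoint, the absolute value can be pushed inside the sum without producing any cross terms.

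For part (i), the plan is to integrate this piecewise constant function against $\mu$. On each cell $A_j$ with $j \in \mathcal{I}_H$ the contribution is $\mu(A_j) \cdot |\mathbb{E}_{\mathrm{D}} \eins_{A_j} - \mathbb{E}_{\mathrm{P}} \eins_{A_j}|/\mu(A_j) = |\mathbb{E}_{\mathrm{D}} \eins_{A_j} - \mathbb{E}_{\mathrm{P}} \eins_{A_j}|$, and summing over $j$ yields the claimed identity. For part (ii), I would use the fact that the $L_{\infty}(\mu)$-norm is the essential supremum and observe that on each cell of positive Lebesgue measure the function $f_{\mathrm{D},H} - f_{\mathrm{P},H}$ is constant with absolute value $|\mathbb{E}_{\mathrm{D}} \eins_{A_j} - \mathbb{E}_{\mathrm{P}} \eins_{A_j}|/\mu(A_j)$; by disjointness, the essential supremum coincides exactly with the supremum of these cell values over $j \in \mathcal{I}_H \cup \{0\}$.

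The only mild obstacle is the bookkeeping around the outer cell $A_0$ where $\mu(A_0) = \infty$. By construction of the histogram transform density, both $f_{\mathrm{D},H}$ and $f_{\mathrm{P},H}$ are zero on $B_r^c$, so the $j = 0$ term contributes nothing to either norm on the left, which is consistent with the right-hand side under the convention $1/\mu(A_0) := 0$. Once this convention is fixed, no measure-theoretic or probabilistic tools beyond the definitions themselves are required: both identities follow from disjointness of the partition and the standard formulas for the $L_1$ and $L_{\infty}$ norms of a simple function.
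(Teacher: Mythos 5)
Your overall strategy---observing that $f_{\mathrm{D},H}-f_{\mathrm{P},H}$ is a simple function on the partition $\pi'_H$ and evaluating the $L_1(\mu)$- and $L_{\infty}(\mu)$-norms cell by cell---is exactly the paper's argument, and for part (ii) your treatment is correct. The genuine gap is in how you dispose of the outer cell $A_0 = B_r^c$ in part (i). Under your convention $1/\mu(A_0):=0$ the difference $f_{\mathrm{D},H}-f_{\mathrm{P},H}$ vanishes on $B_r^c$, so your cellwise integration yields $\|f_{\mathrm{D},H}-f_{\mathrm{P},H}\|_{L_1(\mu)}=\sum_{j\in\mathcal{I}_H}|\mathbb{E}_{\mathrm{D}}\eins_{A_j}-\mathbb{E}_{\mathrm{P}}\eins_{A_j}|$, i.e.\ without the $j=0$ term. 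But the right-hand side of (i) is not weighted by $1/\mu(A_j)$: its $j=0$ term is $|\mathbb{E}_{\mathrm{D}}\eins_{B_r^c}-\mathbb{E}_{\mathrm{P}}\eins_{B_r^c}|=|\mathrm{D}(B_r^c)-\mathrm{P}(B_r^c)|$, which is in general strictly positive (for instance, when $\mathrm{P}$ has unbounded support and no sample point falls outside $B_r$ it equals $\mathrm{P}(B_r^c)>0$). So your assertion that dropping the outer cell ``is consistent with the right-hand side'' is false; under your convention the claimed equality in (i) would actually fail, and your computation only produces the sum over $\mathcal{I}_H$, which is strictly smaller than the stated right-hand side whenever $\mathrm{D}(B_r^c)\neq\mathrm{P}(B_r^c)$.

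The paper keeps the $j=0$ term throughout: on $B_r^c$ it writes the difference as the constant $(\mathrm{D}(B_r^c)-\mathrm{P}(B_r^c))/\mu(B_r^c)$ and formally integrates it over $B_r^c$, using the same convention $\mu(B_r^c)/\mu(B_r^c)=1$ that underlies the verification that $f_{\mathrm{Q},H}$ integrates to one and the identity $\int_{B_r^c}f_{\mathrm{P},H}\,d\mu=\mathrm{P}(B_r^c)$ in the proof of Proposition \ref{L1LInftyRelation}, thereby recovering exactly $|\mathrm{D}(B_r^c)-\mathrm{P}(B_r^c)|$. Retaining this term is not cosmetic: in Proposition \ref{OracleInequalityL1} the quantity $|\mathbb{E}_{\mathrm{D}}\eins_{B_r^c}-\mathbb{E}_{\mathrm{P}}\eins_{B_r^c}|$ is singled out and bounded by a separate Bernstein argument, which is how the mass outside $B_r$ is controlled in the $L_1(\mu)$ convergence rates for heavy-tailed $\mathrm{P}$. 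To repair your proof you must either adopt the paper's formal convention for the density on the unbounded cell, or restate (i) with the sum restricted to $\mathcal{I}_H$ and carry the outer-cell term separately; as written, your argument does not establish the equality in (i) as stated.
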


Now, there is a need for some constraints on the complexity of the function set so that the set will have a finite VC dimension \citep{vapnik71a}, and therefore make the algorithm PAC learnable \citep{valiant84a}, see e.g., \cite[Definition 3.6.1]{Gine2016Infinitedimension}.

\begin{definition}[VC dimension] \label{def::VCdimension}
Let $\mathcal{B}$ be a class of subsets of $\mathcal{X}$ and $A \subset \mathcal{X}$ be a finite set. The trace of $\mathcal{B}$ on $A$ is defined by $\{ B \cap A : B \in  \mathcal{B} \}$. Its cardinality is denoted by $\Delta^{\mathcal{B}}(A)$. We say that $\mathcal{B}$ shatters $A$ if $\Delta^{\mathcal{B}}(A) = 2^{\#(A)}$, that is, if for every $A' \subset A$, there exists a $B \subset \mathcal{B}$ such that $A' = B \cap A$. For $k \in \mathbb{N}$, let 
\begin{align*}
m^{\mathcal{B}}(k)  := \sup_{A \subset \mathcal{X}, \, \#(A) = k}  \Delta^{\mathcal{B}}(A).
\end{align*} 
Then, the set $\mathcal{B}$ is a Vapnik-Chervonenkis class if there exists $k < \infty$ such that $m^{\mathcal{B}}(k) < 2^k$ and the minimal of such $k$ is called the \emph{VC dimension} of $\mathcal{B}$, and abbreviated as $\mathrm{VC}(\mathcal{B})$.
\end{definition}

Recall that $H$ is a histogram transform, $\pi_H := (A_j)_{j\in \mathcal{I}_H}$ is a partition of $B_r$ with the index set $\mathcal{I}_H$ induced by $H$, and $\Pi_H$ is the gathering of all partitions $\pi_H$. To bound the estimation error, we need to introduce some more notations. To this end, let $\pi_h$ denote the collection of all cells in $\pi_H$, that is, 
\begin{align}  \label{WidetildeMathcalB}
\pi_h := \{ A_j :  A_j \in \pi_H \subset \Pi_H \}. 
\end{align} 
Moreover, we define
\begin{align} \label{Bh}
\Pi_h := \biggl\{ B : B = \bigcup_{j \in I} A_j,  I \subset \mathcal{I}_H, A_j \in \pi_H \subset \Pi_H \biggr\}.
\end{align}

\begin{lemma} \label{VCindex}
Let $\pi_h$ and $\Pi_h$ be defined as in \eqref{WidetildeMathcalB} and \eqref{Bh}, respectively. Then we have
\begin{align*} 
\mathrm{VC}(\pi_h) \leq 2^d + 2
\end{align*} 
and
\begin{align} \label{VCMathcalBh}
\mathrm{VC}(\Pi_h)
\leq \bigl( d (2^d - 1) + 2 \bigr) \biggl( \frac{2 r \sqrt{d}}{\underline{h}_0} + 1 \biggr)^d.
\end{align}
\end{lemma}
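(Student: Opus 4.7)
The plan is to prove the two bounds separately: first controlling the complexity of a single cell, then leveraging the partition structure to bound unions of cells.

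For $\mathrm{VC}(\pi_h) \leq 2^d + 2$, I would first unpack the geometric form of an arbitrary cell $A_j \in \pi_h$. By \eqref{TransBin} and \eqref{equ::InputBin}, each cell is of the form $H^{-1}([k, k+1)^d)$ for some $k \in \mathbb{Z}^d$, hence is the intersection of $d$ slabs of unit width in the transformed coordinates whose normal directions are the rows of $R S$. As $H$ ranges over admissible histogram transforms, $\pi_h$ therefore coincides with the class of all rotated--stretched unit parallelepipeds, restricted to $B_r$. Rather than invoking a general-purpose VC bound for intersections of $2d$ halfspaces (which yields only an $O(d^2 \log d)$-type bound), I would prove the sharper bound by a direct combinatorial argument tailored to parallelepipeds: assume for contradiction that a set $T$ with $|T| = 2^d + 3$ is shattered, and exploit the fact that a parallelepiped has exactly $2^d$ extremal vertices, one per sign pattern along the $d$ orthogonal normal directions, to derive an impossibility when attempting to carve out all $2^{|T|}$ subsets $S \subseteq T$.

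For $\mathrm{VC}(\Pi_h)$, I would proceed in two stages. Stage (a) is a volume-and-diameter count bounding the number of cells in any single partition that meet $B_r$: since each row of $R S$ has Euclidean norm $s_i \leq 1/\underline{h}_0$, the image $H(B_r)$ is contained in a cube of side at most $2 r \sqrt{d}/\underline{h}_0$ in the transformed space, so at most $N := (2 r \sqrt{d}/\underline{h}_0 + 1)^d$ integer unit cells can meet it, and consequently $|\mathcal{I}_H| \leq N$. Stage (b) observes that every element of $\Pi_h$ is a union of at most $N$ cells drawn from $\pi_h$, so $\Pi_h \subseteq \pi_h^{\cup N}$, and then invokes a standard VC-union bound: if $\mathcal{C}$ has VC dimension $v$, then $\mathcal{C}^{\cup N}$ has VC dimension at most $c \cdot N \cdot v$ after absorbing the $\log(N v) = O(d)$ factor that arises because both $N$ and $v$ are exponential in $d$. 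Carrying the explicit constants through this step with $v = 2^d + 2$ yields the stated bound $(d(2^d - 1) + 2) \cdot N$.

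The main obstacle is clearly the first step, the sharp combinatorial bound $\mathrm{VC}(\pi_h) \leq 2^d + 2$. Off-the-shelf machinery for VC dimensions of polytopal classes tends to produce either $O(d^2)$ bounds from parameter counting or $O(d \log d)$-type bounds with spurious logarithmic factors from intersection-of-halfspace lemmas, none of which matches $2^d + 2$ on the nose. The tailored argument must exploit the rigid geometry of a parallelepiped---its $2^d$ vertices and $d$ pairs of parallel facets---to pin down precisely how a candidate shattered set can interact with admissible cells. Once this cornerstone is in place, the passage to $\Pi_h$ via Stages (a) and (b) is essentially routine: the volume argument is a direct geometric count, and the union bound is a classical application of Sauer's lemma with careful bookkeeping of constants.
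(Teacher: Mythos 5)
There are two genuine gaps here. First, your cornerstone claim $\mathrm{VC}(\pi_h)\leq 2^d+2$ is only announced, not proved: you say the tailored argument ``must exploit'' the $2^d$ vertices of a parallelepiped and ``derive an impossibility,'' but no such derivation is given, and you yourself flag this as the main obstacle. The paper closes this step with a concrete two-case argument: if some point $x$ of a candidate set $A$ with $\#(A)=2^d+2$ lies in the convex hull of a subset $A_1\subset A\setminus\{x\}$ of size $2^d$, then convexity of each cell (a parallelepiped intersected with $B_r$) forces any cell containing $A_1$ to contain $x$ as well, so the trace $A_1$ is never realized; if instead all points are in convex position, a parity argument on the graph of the convex hull produces a non-realizable subset. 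Without an argument of this kind your first bound remains unproven.

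Second, your passage to $\Pi_h$ via a generic union bound cannot deliver the stated constant. If $\mathcal{C}$ has VC dimension $v$, unions of at most $N$ members of $\mathcal{C}$ have VC dimension of order $Nv\log N$, and the $\log N$ factor is in general unavoidable. Here $N=(2r\sqrt{d}/\underline{h}_0+1)^d$, so $\log N \approx d\log\bigl(2r\sqrt{d}/\underline{h}_0\bigr)$, which is not $O(d)$: in every application of this lemma one takes $\underline{h}_{0,n}\to 0$, so this factor diverges and cannot be ``absorbed.'' Your route therefore produces an extra $\log(r/\underline{h}_0)$ factor and does not reproduce $\bigl(d(2^d-1)+2\bigr)\bigl(2r\sqrt{d}/\underline{h}_0+1\bigr)^d$; the claim that carrying constants through yields exactly $d(2^d-1)+2$ is unsupported. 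The paper avoids the union bound entirely by localization: it tiles $B_r$ with $(\lfloor 2r\sqrt{d}/\underline{h}_0\rfloor+1)^d$ small cubes of diameter at most $\underline{h}_0$, observes that at most one vertex of any single histogram partition can lie in such a cube, so the trace of $\Pi_h$ on the cube is generated by at most $2^d-1$ hyperplane splits, and then applies Lemma \ref{VCindexPre} (unions over a partition with $p$ splits have VC dimension at most $dp+2$) together with a pigeonhole count over the cubes. Note also that the factor $\bigl(2r\sqrt{d}/\underline{h}_0+1\bigr)^d$ in \eqref{VCMathcalBh} counts these localization cubes, not the histogram cells meeting $B_r$ as in your Stage (a), even though the two counts happen to look alike.
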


To bound the capacity of an infinite function set, we need to introduce the following fundamental descriptions which enables an approximation by finite subsets, see e.g. \cite[Definition 6.19]{StCh08}.

\begin{definition}[Covering Numbers]
Let $(X, d)$ be a metric space, $A \subset X$ and $\varepsilon > 0$. We call $A' \subset A$ an $\varepsilon$-net of $A$ if for all $x \in A$ there exists an $x' \in A'$ such that $d(x, x') \leq \varepsilon$. Moreover, the $\varepsilon$-covering number of $A$ is defined as
\begin{align*}
\mathcal{N}(A, d, \varepsilon)
= \inf \biggl\{ n \geq 1 : \exists x_1, \ldots, x_n \in X \text{ such that } A \subset \bigcup_{i=1}^n B_d(x_i, \varepsilon) \biggr\},
\end{align*}
where $B_d(x, \varepsilon)$ denotes the closed ball in $X$ centered at $x$ with radius $\varepsilon$.
\end{definition}

Let $\mathcal{B}$ be a class of subsets of $\mathcal{X}$, denote $\eins_{\mathcal{B}}$ as the collection of the indicator functions of all $B \in \mathcal{B}$, that is, $\eins_{\mathcal{B}} := \{ \eins_B : B \in \mathcal{B} \}$. Moreover, as usual, for any probability measure $\mathrm{Q}$, $L_1(\mathrm{Q})$ is denoted as the $L_1$ space with respect to $Q$ equipped with the norm $\|\cdot\|_{L_1(\mathrm{Q})}$.

\begin{lemma} \label{ScriptBhCoveringNumber}
Let $\pi_h$ and $\Pi_h$ be defined as in \eqref{WidetildeMathcalB} and \eqref{Bh}, respectively. Then, for all $0 < \varepsilon < 1$, there exists a universal constant $K$ such that for any probability measure $\mathrm{Q}$, there hold
\begin{align} \label{CollectionCoveringNumber} 
\mathcal{N}(\eins_{\pi_h}, \|\cdot\|_{L_1(\mathrm{Q})}, \varepsilon) \leq K (2^d+2) (4e)^{2^d+2} \biggl( \frac{1}{\varepsilon} \biggr)^{2^d+1} 
\end{align} 
and
\begin{align} \label{BpCoveringNumber}
\mathcal{N}(\eins_{\Pi_h}, \|\cdot\|_{L_1(\mathrm{Q})}, \varepsilon) \leq K \biggl(\frac{c_dr}{\underline{h}_0}\biggr)^d (4 e)^{(\frac{c_dr}{\underline{h}_0})^d} \biggl( \frac{1}{\varepsilon} \biggr)^{(\frac{c_dr}{\underline{h}_0})^d - 1},
\end{align}
where the constant $c_d := 2^{1+\frac{1}{d}}\cdot 3\cdot d^{\frac{1}{d}+\frac{1}{2}}$.
\end{lemma}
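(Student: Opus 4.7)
The plan is to derive both bounds from a classical VC-type covering number estimate combined with the VC dimension bounds already established in Lemma \ref{VCindex}. Specifically, I would invoke the following standard fact (attributable to Dudley/Haussler and available, e.g., in Theorem 2.6.4 of van der Vaart and Wellner or in Chapter 9 of Györfi et al.): for any class $\mathcal{B}$ of measurable subsets of $\mathcal{X}$ with $\mathrm{VC}(\mathcal{B}) = V < \infty$, there exists a universal constant $K$ such that for every probability measure $\mathrm{Q}$ and every $\varepsilon \in (0, 1)$,
\begin{align*}
\mathcal{N}\bigl(\eins_{\mathcal{B}}, \|\cdot\|_{L_1(\mathrm{Q})}, \varepsilon\bigr)
\leq K V (4 e)^V (1/\varepsilon)^{V-1}.
\end{align*}

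First I would treat \eqref{CollectionCoveringNumber}. Since Lemma \ref{VCindex} gives $\mathrm{VC}(\pi_h) \leq 2^d + 2$, and since the right-hand side of the general bound is nondecreasing in $V$ (as $\varepsilon < 1$, so $(1/\varepsilon)^{V-1}$ and $(4e)^V$ both grow with $V$), substituting $V = 2^d + 2$ yields \eqref{CollectionCoveringNumber} immediately.

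Next I would handle \eqref{BpCoveringNumber}. Again by Lemma \ref{VCindex}, we have $\mathrm{VC}(\Pi_h) \leq V_d := \bigl(d(2^d - 1) + 2\bigr)\bigl(2r\sqrt{d}/\underline{h}_0 + 1\bigr)^d$. Plugging this into the general covering number bound gives a first estimate of the form $K V_d (4e)^{V_d} (1/\varepsilon)^{V_d - 1}$, and the task reduces to showing the algebraic inequality $V_d \leq (c_d r/\underline{h}_0)^d$ with the stated $c_d = 2^{1 + 1/d} \cdot 3 \cdot d^{1/d + 1/2}$; the target bound \eqref{BpCoveringNumber} then follows by monotonicity in $V$.

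The main obstacle is the constant-chasing in this last step. Assuming the mild normalization $\underline{h}_0 \leq r$ (otherwise $\Pi_h$ is trivially small), one can bound $2r\sqrt{d}/\underline{h}_0 + 1 \leq (2\sqrt{d} + 1) \, r/\underline{h}_0 \leq 3\sqrt{d} \cdot r/\underline{h}_0$, and absorb the polynomial prefactor via $d(2^d - 1) + 2 \leq 2 d \cdot 2^d$, obtaining
\begin{align*}
V_d \leq 2 d \cdot 2^d \cdot (3\sqrt{d})^d (r/\underline{h}_0)^d
= 2 \cdot 6^d \cdot d \cdot d^{d/2} (r/\underline{h}_0)^d
= (c_d \, r/\underline{h}_0)^d,
\end{align*}
which is precisely the inequality needed. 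Once this piece of arithmetic is in place, the rest is bookkeeping and \eqref{BpCoveringNumber} is established.
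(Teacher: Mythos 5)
Your proposal is correct and follows essentially the same route as the paper: both invoke the standard Haussler-type VC covering number bound (the paper cites Theorem 9.2 of Kosorok, the same result you attribute to van der Vaart--Wellner), plug in the VC dimension bounds from Lemma \ref{VCindex}, and absorb the resulting prefactor into $(c_d r/\underline{h}_0)^d$ by the same arithmetic. Your explicit remark that $\underline{h}_0 \leq r$ is needed to swallow the ``$+1$'' is a minor point the paper leaves implicit, but otherwise the two arguments coincide.
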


Now, we are able to establish oracle inequalities under $L_1(\mu)$-norm and $L_{\infty}(\mu)$-norm which can be used to derive the convergence rates.

\begin{proposition} \label{OracleInequalityL1}
Let the histogram transform $H_n$ be defined as in \eqref{HistogramTransform} with bandwidth $h_n$ satisfying Assumption \ref{assumption::h} with $\overline{h}_{0,n} \leq 1$. Then, for all $r \geq 2$ and $n \geq 1$, there holds
\begin{align*}
\|f_{\mathrm{D},H_n} - f_{\mathrm{P},H_n}\|_{L_1(\mu)}
\leq \sqrt{\frac{9\log n(1+12(c_dr/\underline{h}_{0,n})^d)}{2n}} + \frac{2 \log n(1+(c_dr/\underline{h}_{0,n})^d)}{n} + \frac{4}{n}
\end{align*}
with probability $\nu_n$  at least $1 - \frac{1}{n}$. 
\end{proposition}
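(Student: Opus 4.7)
\textbf{Proof proposal for Proposition \ref{OracleInequalityL1}.}

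The plan is to reduce the $L_1$ error to a uniform deviation over the class $\Pi_h$ of finite unions of cells, and then invoke a uniform Bernstein-type concentration inequality controlled via the covering number bound \eqref{BpCoveringNumber}.

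First, I would apply Lemma \ref{FundamentalLemma}(i) to write
\begin{align*}
\|f_{\mathrm{D},H_n} - f_{\mathrm{P},H_n}\|_{L_1(\mu)}
= \sum_{j \in \mathcal{I}_{H_n} \cup \{0\}} |(\mathrm{D} - \mathrm{P})(A_j)|.
\end{align*}
Splitting the indices according to the sign of $(\mathrm{D} - \mathrm{P})(A_j)$ and setting
$B^+ := \bigcup_{j:\,(\mathrm{D}-\mathrm{P})(A_j)\geq 0} A_j$ and $B^- := \bigcup_{j:\,(\mathrm{D}-\mathrm{P})(A_j)<0} A_j$,
both of which belong to $\Pi_h$, the sum telescopes into
$(\mathrm{D}-\mathrm{P})(B^+) - (\mathrm{D}-\mathrm{P})(B^-) \leq 2 \sup_{B \in \Pi_h} |(\mathrm{D}-\mathrm{P})(B)|$.
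So the task becomes that of bounding $\sup_{B \in \Pi_h} |(\mathrm{D}-\mathrm{P})(B)|$.

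Second, for each fixed $B \in \Pi_h$, Bernstein's inequality applied to the bounded variables $\eins_B(X_i) - \mathrm{P}(B)$ (which have variance at most $\mathrm{P}(B) \leq 1$ and range in $[-1,1]$) gives, with probability at least $1 - 2e^{-\tau}$,
\begin{align*}
|(\mathrm{D}-\mathrm{P})(B)|
\leq \sqrt{\tfrac{2\tau}{n}} + \tfrac{\tau}{n}.
\end{align*}
To upgrade this to a uniform statement over $B \in \Pi_h$, I would use the $L_1(\mathrm{P})$ covering number of $\eins_{\Pi_h}$ supplied by Lemma \ref{ScriptBhCoveringNumber}, pass to a finite $\varepsilon$-net at a scale $\varepsilon \asymp 1/n$ (so that discretization error is absorbed into the final $4/n$ remainder), and apply a union bound over the net. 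The logarithm of the covering number is controlled by $(c_d r/\underline{h}_{0,n})^d$ up to factors involving $\log(1/\varepsilon)$, and taking $\tau = \log n \cdot (1 + c \cdot (c_d r/\underline{h}_{0,n})^d)$ for an appropriate constant $c$ yields the two dominant terms of the stated bound after multiplying by the factor $2$ from the reduction step.

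Third, I would carefully track constants so as to produce exactly the coefficients $9/2$ and $12$, and $2$ and $1$ appearing in the proposition. The anticipated main obstacle is precisely this bookkeeping: one must choose the discretization scale and the deviation level $\tau$ so that (a) the union bound over the $\varepsilon$-net still fits inside the tail probability $1/n$, (b) the variance term $\sqrt{2\mathrm{P}(B)\tau/n}$ collapses into the claimed $\sqrt{9\log n(1 + 12(c_dr/\underline{h}_{0,n})^d)/(2n)}$ after accounting for the factor $2$ from step one and the crude bound $\mathrm{P}(B)\leq 1$, and (c) the additive discretization error is at most $4/n$. Once these bookkeeping pieces line up, the assumption $\overline{h}_{0,n} \leq 1$ (together with $r \geq 2$) ensures that $c_dr/\underline{h}_{0,n} \geq 1$, so the $+1$'s inside the bound do not degenerate, and the conclusion follows.
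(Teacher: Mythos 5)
Your skeleton is the same as the paper's (reduce via Lemma \ref{FundamentalLemma}(i) to a supremum over unions of cells, then Bernstein plus an $\varepsilon$-net from Lemma \ref{ScriptBhCoveringNumber} at scale $\varepsilon = 1/n$ with $\tau \asymp \log n$), but two steps fail as written. First, your claim that $B^+, B^- \in \Pi_h$ is false in general: the sum in Lemma \ref{FundamentalLemma}(i) runs over $\mathcal{I}_{H_n} \cup \{0\}$, and the cell $A_0 = B_r^c$ is not a union of cells of $\pi_H$ (which partitions $B_r$ only), so whichever of $B^{\pm}$ absorbs $A_0$ leaves the class $\Pi_h$ and the covering bound \eqref{BpCoveringNumber} no longer applies to it. The paper excludes $j=0$ from the sign-splitting reduction and controls $|\mathbb{E}_{\mathrm{D}}\eins_{B_r^c} - \mathbb{E}_{\mathrm{P}}\eins_{B_r^c}|$ by one additional, standalone Bernstein application; this extra term is precisely where the constants come from (three copies of $\sqrt{\tau/(2n)}$ and of $2\tau/(3n)$ give $\sqrt{9\tau/(2n)} + 2\tau/n$), so your accounting of the factor $2$ alone cannot reproduce the stated bound without either this separate term or an enlarged class with a re-derived entropy bound. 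Relatedly, your crude variance bound $\mathrm{P}(B) \leq 1$ gives $2\sqrt{2\tau/n} = \sqrt{16\tau/(2n)}$ for the sup part, which overshoots the claimed $\sqrt{9(\cdot)/(2n)}$; you need $\mathrm{P}(B)(1-\mathrm{P}(B)) \leq 1/4$ as in the paper.

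Second, taking the net with respect to $L_1(\mathrm{P})$ does not let you absorb the discretization error into $4/n$. With a $\mathrm{P}$-net you get $|\mathbb{E}_{\mathrm{P}}\eins_B - \mathbb{E}_{\mathrm{P}}\eins_{B_j}| \leq \varepsilon$ for free, but the empirical side $|\mathbb{E}_{\mathrm{D}}\eins_B - \mathbb{E}_{\mathrm{D}}\eins_{B_j}| \leq \mathrm{D}(B \triangle B_j)$ is a random quantity whose mean is $\leq \varepsilon = 1/n$ but which is not deterministically of that order; making it uniformly small over the uncountable class $\Pi_h$ would require a further uniform argument (another entropy/VC step), not a union bound over the net alone. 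The paper sidesteps this by choosing the net with respect to the empirical metric $\|\cdot\|_{L_1(\mathrm{D})}$ — legitimate because Lemma \ref{ScriptBhCoveringNumber} bounds the covering number uniformly over all probability measures $\mathrm{Q}$ — so the empirical discretization is $\leq \varepsilon$ by construction and only the population side needs the expectation argument. If you swap your net to $L_1(\mathrm{D})$ (or to $\mathrm{Q} = \tfrac12(\mathrm{D}+\mathrm{P})$), handle $A_0 = B_r^c$ separately, and use the variance bound $1/4$, your argument aligns with the paper's proof and the stated constants follow.
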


\begin{proposition}\label{OracleInequalityInftyNorm}
Let the histogram transform $H_n$ be defined as in \eqref{HistogramTransform} with bandwidth $h_n$ satisfying Assumption \ref{assumption::h} with $\overline{h}_{0,n} \leq 1$. Moreover, assume that $\mathcal{X} \subset B_r \subset \mathbb{R}^d$ and the density function $f$ satisfies $\|f\|_{L_{\infty}(\mu)} < \infty$. Then for all $\tau > 0$,
all $x \in B^+_{r, \sqrt{d} \cdot \overline{h}_{0,n}}$, 
and all $n \geq N_0$ with $N_0 := \max\{e, 2K, \mu(B_r)\}$ and 
$K$ as in Lemma \ref{ScriptBhCoveringNumber}, there holds
\begin{align*}
\|f_{\mathrm{D},H_n} - f_{\mathrm{P},H_n}\|_{L_{\infty}(\mu)}
& \leq \sqrt{\frac{2 \|f\|_{L_{\infty}(\mu)} (\tau + 2^{d+4} \log n +  2^{d+2}\log(1/\underline{h}_{0,n}^d))}{n\underline{h}_{0,n}^d}}
\nonumber\\
& \phantom{=}
+ \frac{2(\tau+2^{d+4}\log n+2^{d+2}\log(1/\underline{h}_{0,n}^d))}{3n\underline{h}_{0,n}^d} + \frac{2}{n}
\end{align*}
with probability $\nu_n$ at least $1 - e^{-\tau}$.
\end{proposition}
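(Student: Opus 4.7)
The plan is to reduce the $L_\infty$ estimation error to the supremum of an empirical process indexed by histogram cells, and then apply Bernstein's inequality together with the covering bound of Lemma \ref{ScriptBhCoveringNumber}(a) to obtain uniform control. By Lemma \ref{FundamentalLemma}(ii),
$$\|f_{\mathrm{D},H_n} - f_{\mathrm{P},H_n}\|_{L_\infty(\mu)} = \sup_{j \in \mathcal{I}_{H_n} \cup \{0\}} \frac{|\mathrm{D}(A_j) - \mathrm{P}(A_j)|}{\mu(A_j)}.$$
Since $x \in B^+_{r, \sqrt{d}\overline{h}_{0,n}}$ and every cell has diameter at most $\sqrt{d}\overline{h}_{0,n}$, only cells lying entirely inside $B_r$ contribute to the supremum, and by Assumption \ref{assumption::h} together with $\mu(A) = (\det H')^{-1} = \prod_i h_i$ each such cell satisfies $\mu(A) \in [\underline{h}_{0,n}^d, \overline{h}_{0,n}^d]$; the boundary cell $A_0 = B_r^c$ is irrelevant on $B^+_{r,\sqrt{d}\overline{h}_{0,n}}$.

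For each fixed interior cell, I would apply Bernstein's inequality directly to the normalized random variables $Y_A := \eins_A/\mu(A)$, using $\|Y_A\|_\infty \leq 1/\underline{h}_{0,n}^d$ and $\mathbb{E}Y_A^2 = \mathrm{P}(A)/\mu(A)^2 \leq \|f\|_{L_\infty(\mu)}/\mu(A) \leq \|f\|_{L_\infty(\mu)}/\underline{h}_{0,n}^d$; this gives the pointwise bound
$$\frac{|\mathrm{D}(A)-\mathrm{P}(A)|}{\mu(A)} \leq \sqrt{\frac{2\|f\|_{L_\infty(\mu)}\,t}{n\,\underline{h}_{0,n}^d}} + \frac{2t}{3n\,\underline{h}_{0,n}^d}$$
with $\mathrm{P}^n$-probability at least $1-2e^{-t}$, which already matches the form in the proposition. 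To upgrade this to a uniform bound over all $A_j$, I observe that regardless of the realization of $H_n$ every cell $A_j$ lies in the deterministic class $\pi_h$ of \eqref{WidetildeMathcalB}, so it suffices to take the supremum over $\{A \in \pi_h : \mu(A) \geq \underline{h}_{0,n}^d\}$. I would pick an $\varepsilon$-net $A_1,\ldots,A_N \subset \pi_h$ in the $L_1(\mathrm{P})$ metric with $\varepsilon := \underline{h}_{0,n}^d/n$; Lemma \ref{ScriptBhCoveringNumber}(a) then yields $\log N \leq (2^d+1)\log(n/\underline{h}_{0,n}^d) + O(2^d)$. A standard splitting $|\mathrm{D}(A)-\mathrm{P}(A)| \leq |\mathrm{D}(A_k)-\mathrm{P}(A_k)| + \mathrm{P}(A \triangle A_k) + \mathrm{D}(A \triangle A_k)$ (with one further Bernstein step controlling the last summand) produces the additive $2/n$ residual after dividing by $\mu(A) \geq \underline{h}_{0,n}^d$; a union bound at level $t = \tau + \log(2N)$, with the dimension-dependent prefactors $K(2^d+2)(4e)^{2^d+2}$ absorbed into the mild threshold $n \geq N_0 = \max\{e,2K,\mu(B_r)\}$, inflates $t$ to at most $\tau + 2^{d+4}\log n + 2^{d+2}\log(1/\underline{h}_{0,n}^d)$, which gives the stated oracle inequality.

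The main obstacle is the bookkeeping in this last step: the $2^d+1$ exponent in Lemma \ref{ScriptBhCoveringNumber}(a) is what ultimately produces the $2^{d+c}$ pattern in the oracle inequality, and one must carefully absorb the multiplicative prefactors and the cross terms into $n \geq N_0$ while inflating $(2^d+1)$ into $2^{d+2}$ and the $\log n$ coefficient into $2^{d+4}$. A secondary subtlety is that Bernstein \emph{must} be applied to the already-normalized variables $\eins_A/\mu(A)$ rather than to $\eins_A$ themselves: a naive route that first bounds $|\mathrm{D}(A)-\mathrm{P}(A)|$ using the variance $\mathrm{P}(A) \leq \|f\|_{L_\infty(\mu)}\overline{h}_{0,n}^d$ and only afterward divides by $\underline{h}_{0,n}^d$ would introduce a spurious factor $(\overline{h}_{0,n}/\underline{h}_{0,n})^d$ that the proposition's statement does not permit, so the normalization has to be built in before the concentration step.
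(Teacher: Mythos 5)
Your overall route is the paper's: reduce via Lemma \ref{FundamentalLemma}(ii) to $\sup_{A}\lvert\mathbb{E}_{\mathrm{D}}\eins_A-\mathbb{E}_{\mathrm{P}}\eins_A\rvert/\mu(A)$ over the deterministic class $\pi_h$, apply Bernstein to the normalized variables $\eins_A/\mu(A)$ (with exactly the variance and sup bounds you state), and then pass to a finite $\varepsilon$-net of $\pi_h$ at scale $\sim 1/n$ via Lemma \ref{ScriptBhCoveringNumber}, with the same bookkeeping turning $(2^d+1)$-type exponents into the $2^{d+4}\log n+2^{d+2}\log(1/\underline{h}_{0,n}^d)$ terms and $2\varepsilon=2/n$. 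Your warning about normalizing before concentrating is precisely the point the paper's choice of $\zeta_i=(\eins_A(x_i)-\mathbb{E}_{\mathrm{P}}\eins_A)/\mu(A)$ takes care of.

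The one step that does not work as written is the net-comparison. First, you take the net in $L_1(\mathrm{P})$ and propose to control the leftover empirical term $\mathrm{D}(A\triangle A_k)$ by ``one further Bernstein step''; but $A$ ranges over the infinite class $\pi_h$, so a pointwise Bernstein application cannot give the needed uniform control, and bounding $\sup_A \mathrm{D}(A\triangle A_{k(A)})$ is itself a uniform-deviation problem of the same type you started with. Second, the pointwise Bernstein bound you proved is for $\lvert\mathbb{E}_{\mathrm{D}}\eins_{A_k}-\mathbb{E}_{\mathrm{P}}\eins_{A_k}\rvert/\mu(A_k)$, whereas your splitting divides by $\mu(A)$; transferring between the two requires controlling $\lvert\mu(A)-\mu(A_k)\rvert$, and closeness in $L_1(\mathrm{P})$ gives no bound on Lebesgue-measure closeness unless $f$ is bounded below. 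The paper's proof repairs both points at once by exploiting that the covering bound of Lemma \ref{ScriptBhCoveringNumber} holds for an arbitrary probability measure $\mathrm{Q}$: the net is taken at the finer scale $\tilde{\varepsilon}=\underline{h}_{0,n}^{2d}\varepsilon/(\underline{h}_{0,n}^d+\mu(B_r))$ and the comparison is done on the normalized functions $\eins_A/\mu(A)$, using $\mathrm{Q}=\mathrm{D}$ to make the empirical increment at most $\varepsilon$ by construction, and $\mathrm{Q}$ the uniform distribution on $B_r$ to convert $\|\eins_A-\eins_{A_k}\|_{L_1(\mathrm{Q})}\leq\tilde\varepsilon$ into $\lvert\mu(A)-\mu(A_k)\rvert\leq\tilde\varepsilon\mu(B_r)$, which together yield $\bigl\|\eins_A/\mu(A)-\eins_{A_k}/\mu(A_k)\bigr\|_{L_1(\mathrm{D})}\leq\varepsilon$ and likewise in $L_1(\mu)$, hence the clean $2\varepsilon=2/n$ residual. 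With your argument amended in this way (empirical/Lebesgue nets rather than an $L_1(\mathrm{P})$ net plus an extra Bernstein step), it coincides with the paper's proof.
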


\subsection{Analysis in the Space $C^{1,\alpha}$}\label{subsec::C1}

Recall that in the previous subsection where $f$ lies in the space $C^{0,\alpha}$, a drawback to $L_1(\mu)$-norm is that 
it does not admit an exact bias-variance decomposition and  the usual Taylor expansion involved techniques for error estimation may not apply directly. Thus, we fail to tell how ensembles exceed single estimators in a theoretical point of view, and therefore we turn to the subspace $C^{1,\alpha}$.

In this subsection, we study the convergence rates of $f_{\mathrm{D},\mathrm{E}}$ and $f_{\mathrm{D},H}$ to the true density $f \in C^{1,\alpha}$. To this end, there is a point in introducing some notations. First of all, for any $t \in \{ 1, \ldots, T \}$ and $x \in \mathbb{R}^d$, recall that the population version of $f_{\mathrm{D},H_t}$ can be formulated as
\begin{align*}
f_{\mathrm{P},H_t}(x) = \mathbb{E}_{\mathrm{P}} \bigl( f(X) | A_{H_t}(x) \bigr),
\end{align*}
where $\mathbb{E}_{\mathrm{P}}(\cdot | A_{H_t}(x))$ denotes the conditional expectation with respect to $\mathrm{P}$ on $A_{H_t}(x)$. With the ensembles of the population version
\begin{align}\label{equ::interes}
f_{\mathrm{P}, \mathrm{E}}(x) := \frac{1}{T} \sum^T_{t=1} f_{\mathrm{P},H_t}(x)
\end{align}
we make the $L_2(\nu_n)$-error decomposition
\begin{align}\label{equ::L2de}
\|f_{\mathrm{D},\mathrm{E}}(x) - f(x)\|_{L_2(\nu_n)}^2
= \|f_{\mathrm{D},\mathrm{E}}(x) - f_{\mathrm{P},\mathrm{E}}(x) \|_{L_2(\nu_n)}^2
   + \|f_{\mathrm{P},\mathrm{E}}(x) - f(x)\|_{L_2(\nu_n)}^2.
\end{align}
In our study, the consistency and convergence analysis of the histogram transform ensembles $f_{\mathrm{D},\mathrm{E}}$
in the space $C^{1,\alpha}$ will be mainly conducted with the help of the decomposition \eqref{equ::L2de}.

In particular, in the case that $T = 1$, i.e., when there is only single histogram transform density estimator, we are concerned with the lower bound of $f_{\mathrm{D}, H}$ to $f$. With the population version
\begin{align*}
f_{\mathrm{P},H}(x) = \mathbb{E}_{\mathrm{P}} ( f(X) | A_H(x) )
\end{align*}
we make the $L_2(\nu_n)$-error decomposition
\begin{align} \label{equ::L2Decomposition}
\|f_{\mathrm{D},H}(x) - f(x)\|_{L_2(\nu_n)}^2
= \|f_{\mathrm{D},H}(x) - f_{\mathrm{P},H}(x)\|_{L_2(\nu_n)}^2
   + \|f_{\mathrm{P},H}(x) - f(x)\|_{L_2(\nu_n)}^2.
\end{align}
It is important to note that both of the two terms on the right-hand side of \eqref{equ::L2de} or \eqref{equ::L2Decomposition} are data- and partition-independent due to the expectation with respect to $\mathrm{D}$ and $H$. Loosely speaking, the first error term corresponds to the expected estimation error of the estimators $f_{\mathrm{D},\mathrm{E}}$ or $f_{\mathrm{D},H}$, while the second one demonstrates the expected approximation error.  Besides, note that \eqref{equ::L2de} and \eqref{equ::L2Decomposition} are exact error decompositions, which are different from the error decompositions \eqref{equ::L1} and \eqref{equ::Linfty} conducted in Section \ref{sec::subsec::C0}.

\subsubsection{Bounds on the Approximation Error}\label{subsubsec::AppError1}

In this subsection, we firstly establish the upper bound for the approximation error term of histogram transform ensembles $f_{\mathrm{P},\mathrm{E}}$ and further find a lower bound of the approximation error for single density estimator $f_{\mathrm{P},H}$.

\begin{proposition} \label{ApproximationError::LTwo}
Let the histogram transform $H$ be defined as in \eqref{HistogramTransform} with bandwidth $h$ satisfying Assumption \ref{assumption::h} and $T$ be the number of single estimators contained in the ensembles. Moreover, let the density function satisfy $f \in C^{1,\alpha}$. Then, for all $n \geq 1$, there holds
\begin{align*}
\|f_{\mathrm{P},\mathrm{E}} - f\|_{L_{\infty}(\mu)}
\leq c_L \biggl( c_{0,n}^{-2d} \cdot \overline{h}_0^{2(1+\alpha)} + \frac{d}{T} \cdot  \overline{h}_0^2 \biggr)^{1/2}
\end{align*}
in the sense of $L_2(\mathrm{P}_H)$-norm, where the constant $c_{0,n}$ is as in Assumption \ref{assumption::h}.
\end{proposition}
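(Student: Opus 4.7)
The plan is to exploit the i.i.d.\ structure of $H_1,\dots,H_T$ via the exact bias--variance identity
\[
\bigl\|f_{\mathrm{P},\mathrm{E}}(x)-f(x)\bigr\|_{L_2(\mathrm{P}_H)}^2
=\bigl(\mathbb{E}_{\mathrm{P}_H}[f_{\mathrm{P},H_1}(x)-f(x)]\bigr)^2+\frac{1}{T}\,\mathrm{Var}_{\mathrm{P}_H}\bigl(f_{\mathrm{P},H_1}(x)\bigr),
\]
valid for each fixed $x$. The two terms in the stated bound correspond respectively to the squared bias, which I aim to bound by $c_L^2 c_{0,n}^{-2d}\overline h_0^{2(1+\alpha)}$, and the variance (divided by $T$), which I aim to bound by $c_L^2 d\,\overline h_0^2/T$. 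Taking a supremum over $x$ at the very end promotes the pointwise $L_2(\mathrm{P}_H)$ bound into the advertised $L_\infty(\mu)$ bound.

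The key preparatory step is to pass into transformed coordinates. Setting $z=H(x)$, $z'=H(x')$, and $u=z-\lfloor z\rfloor\in[0,1)^d$, the substitution $v=z'-\lfloor z\rfloor$ over the cell $A_H(x)$ gives
\[
f_{\mathrm{P},H}(x)-f(x)=\int_{[0,1)^d}\!\bigl[f\bigl(x+S^{-1}R^{-1}(v-u)\bigr)-f(x)\bigr]\,dv.
\]
The crucial randomness enters through the translation $b\sim\mathrm{Unif}[0,1]^d$: conditionally on $R,S$, this forces $u$ to be uniformly distributed on $[0,1)^d$, so that $\tilde u:=\tfrac12-u$ is mean-zero with covariance $\tfrac{1}{12}I_d$. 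This is the structure that makes ensembling worthwhile.

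For the bias I would Taylor-expand $f\in C^{1,\alpha}$ around $x$,
\[
f(x+\delta)-f(x)=\nabla f(x)^\top\delta+r(\delta),\qquad |r(\delta)|\le\tfrac{c_L}{1+\alpha}\|\delta\|^{1+\alpha},
\]
and plug in $\delta=S^{-1}R^{-1}(v-u)$. Integrating over $v$, the linear part collapses to $\nabla f(x)^\top S^{-1}R^{-1}\tilde u$, which vanishes under $\mathbb{E}_b$ by symmetry, leaving only the remainder. Since $\|\delta\|\le\sqrt{d}\,\overline h_0$, one has $|\mathbb{E}_{\mathrm{P}_H}[f_{\mathrm{P},H_1}(x)-f(x)]|\lesssim\overline h_0^{1+\alpha}$. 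Producing the precise constant $c_L c_{0,n}^{-d}$ requires a more careful, coordinate-wise treatment of $\|S^{-1}R^{-1}(v-u)\|^{1+\alpha}$ that expresses the cell's anisotropy through the ratio $\overline h_0/\underline h_0\le c_{0,n}^{-1}$, which gets raised to the $d$-th power after the product over the $d$ coordinate directions; this is where I expect the main bookkeeping difficulty.

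For the variance I reuse the same expansion to write
\[
f_{\mathrm{P},H}(x)-f(x)=\nabla f(x)^\top S^{-1}R^{-1}\tilde u+O\bigl(\overline h_0^{1+\alpha}\bigr),
\]
so conditioning on $(R,S)$ and using $\mathbb{E}_b[\tilde u\tilde u^\top]=\tfrac{1}{12}I_d$ gives
\[
\mathbb{E}_b\bigl[(f_{\mathrm{P},H}(x)-f(x))^2\bigr]\le\tfrac{1}{12}\,\nabla f(x)^\top S^{-2}\nabla f(x)+O\bigl(\overline h_0^{2(1+\alpha)}\bigr),
\]
which is bounded by $\tfrac{1}{12}\overline h_0^2\|\nabla f(x)\|^2\le\tfrac{d\,c_L^2}{12}\overline h_0^2$ via $\|\nabla f\|\le c_L$ and $\|S^{-1}\|_{\mathrm{op}}\le\overline h_0$. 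Inserting the bias and variance bounds into the decomposition, taking a square root and then a sup in $x$, yields the claim. The qualitative $(\mathrm{bias})^2+(\mathrm{variance})/T$ split is a transparent consequence of the i.i.d.\ averaging; the main technical obstacle is simply tracking the $d$- and $c_{0,n}$-dependent constants carefully enough to land on the precise form $c_L\bigl(c_{0,n}^{-2d}\overline h_0^{2(1+\alpha)}+(d/T)\overline h_0^2\bigr)^{1/2}$.
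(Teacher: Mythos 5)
Your proposal is correct and follows essentially the same route as the paper: the exact bias--variance decomposition over the i.i.d.\ transforms, the cell representation $x'=x+S^{-1}R^{\top}(v-u)$ with $u\sim\mathrm{Unif}[0,1]^d$, a first-order Taylor expansion whose linear part is killed by $\mathbb{E}_b[\tfrac12-u]=0$ so that only the H\"{o}lder remainder of order $\overline{h}_0^{1+\alpha}$ survives in the bias, and a variance contribution of order $d\,c_L^2\overline{h}_0^2/T$. The one remark worth making is that your anticipated ``bookkeeping difficulty'' for the constant $c_L c_{0,n}^{-d}$ does not arise: since $\prod_i h_i/\mu(A_H(x))=1$ the paper bounds the bias simply by $c_L\overline{h}_0^{1+\alpha}$ and the factor $c_{0,n}^{-2d}\ge 1$ in the statement is pure slack, while the variance is handled even more crudely than in your sketch, via Lipschitz continuity, $\mathrm{Var}_{\mathrm{P}_H}(f_{\mathrm{P},H}(x))\le c_L^2\,\mathbb{E}_{\mathrm{P}_H}\bigl(\mathrm{diam}(A_H(x))\bigr)^2\le d\,c_L^2\overline{h}_0^2$, which avoids the cross term between the linear part and the Taylor remainder that your covariance computation would otherwise have to control.
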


\begin{proposition} \label{ApproximationError::LTwoCounter}
Let the histogram transform $H$ be defined as in \eqref{HistogramTransform} with bandwidth $h$ satisfying Assumption \ref{assumption::h}. Furthermore, let the density function $f \in C^{1,\alpha}$ and $\mathcal{A}_f$ be the set \eqref{DegenerateSetF}. Then for all $x \in B_{r, \sqrt{d} \cdot \overline{h}_0}^+ \cap \mathcal{A}_f$ and all
\begin{align*}
h_0 \leq \biggl( \frac{\sqrt{d} \underline{c}'_f c_0}{4 \sqrt{3} c_L} \biggr)^{\frac{1}{\alpha}},
\end{align*}
we have
\begin{align}\label{eq::appolower}
\|f_{\mathrm{P},H} - f\|_{L_{\infty}(\mu)}
\geq \frac{\sqrt{d}}{4} \underline{c}_f' c_0 \cdot \overline{h}_0
\end{align}
in the sense of $L_2(\mathrm{P}_H)$-norm, where the constant $c_0$ is as in Assumption \ref{assumption::h}.
\end{proposition}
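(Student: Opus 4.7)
The plan is to lower-bound the $L_2(\mathrm{P}_H)$-norm of the pointwise quantity $|f_{\mathrm{P},H}(x)-f(x)|$ at a single, arbitrary $x\in B^+_{r,\sqrt d\,\overline h_0}\cap\mathcal A_f$; the elementary bound $\|f_{\mathrm{P},H}-f\|_{L_\infty(\mu)}\geq |f_{\mathrm{P},H}(x)-f(x)|$ then transfers the estimate to the $L_\infty$-norm. The conceptual heart of the argument is that the centroid of the cell $A_H(x)$ is a non-trivial random translate of $x$, and this non-cancellation forces the first-order term of a Taylor expansion of $f$ to remain the dominant contribution of the approximation error.

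First I would Taylor-expand $f\in C^{1,\alpha}$ about $x$. The containment $x\in B^+_{r,\sqrt d\,\overline h_0}$ combined with $\mathrm{diam}\,A_H(x)\leq\sqrt d\,\overline h_0$ ensures $A_H(x)\subset B_r$, so \eqref{equ::RHdensity} gives $f_{\mathrm{P},H}(x)-f(x)=\nabla f(x)^\top(c_H(x)-x)+r_H(x)$, where $c_H(x):=\mu(A_H(x))^{-1}\int_{A_H(x)}x'\,d\mu(x')$ is the centroid of $A_H(x)$ and the H\"older assumption on $\nabla f$ yields $|r_H(x)|\lesssim c_L\,\overline h_0^{\,1+\alpha}$. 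Next I would compute $c_H(x)$ via the affine change of variables $y=H(x')=RSx'+b$, which maps $A_H(x)$ to the unit cube $\lfloor H(x)\rfloor+[0,1]^d$; this yields the key identity $c_H(x)-x=S^{-1}R^{\top}\bigl(\tfrac{1}{2}\eins-\{H(x)\}\bigr)$, where $\{\cdot\}$ denotes componentwise fractional part, so that the first-order term becomes $L(H):=\langle RS^{-1}\nabla f(x),\,\tfrac{1}{2}\eins-\{H(x)\}\rangle$.

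The crucial step is to compute the variance of $L$. Since $b\sim U([0,1]^d)$ is independent of $(R,S)$ and the map $b\mapsto\{RSx+b\}$ is measure-preserving on $[0,1]^d$ for every fixed $(R,S,x)$, conditionally on $(R,S)$ the vector $\xi:=\tfrac{1}{2}\eins-\{H(x)\}$ is uniform on $[-1/2,1/2]^d$ with independent coordinates of variance $1/12$. Using orthogonality of $R$ and Assumption~\ref{assumption::h} (so $h_i\geq c_0\overline h_0$) together with $|\partial_if(x)|\geq\underline c'_f$ on $\mathcal A_f$, one obtains $\mathbb{E}_{\mathrm{P}_H}L^2=\tfrac{1}{12}\,\mathbb{E}_{\mathrm{P}_S}\|S^{-1}\nabla f(x)\|^2\geq\tfrac{d\,c_0^2(\underline c'_f)^2}{12}\,\overline h_0^{\,2}$. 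A Minkowski inequality $\|L+r_H\|_{L_2(\mathrm{P}_H)}\geq\|L\|_{L_2(\mathrm{P}_H)}-\|r_H\|_{L_2(\mathrm{P}_H)}$, combined with the hypothesized threshold $\overline h_0\leq(\sqrt d\,\underline c'_fc_0/(4\sqrt 3\,c_L))^{1/\alpha}$, ensures that $\|r_H\|_{L_2(\mathrm{P}_H)}$ is a sufficiently small fraction of the lower bound on $\|L\|_{L_2(\mathrm{P}_H)}$ to produce the constant $\sqrt d\,\underline c'_fc_0/4$ claimed in \eqref{eq::appolower}.

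The hard part will be the variance computation itself: one must rigorously justify both the geometric fact that the centroid commutes with the affine change of variables (via a Jacobian argument that is clean because $H$ is affine and $A_H(x)$ is a full bin in transformed space) and the probabilistic fact that $\{RSx+b\}$ is genuinely uniform on $[0,1]^d$ regardless of $(R,S,x)$. These two ingredients together are precisely what prevents a single histogram transform from cancelling its first-order bias, and contrast sharply with the ensemble result of Proposition~\ref{ApproximationError::LTwo}, where averaging over $T$ independent transforms contracts the leading term by a factor $1/\sqrt T$ so that the $C^{1,\alpha}$-improvement $\overline h_0^{\,1+\alpha}$ of the remainder can take over.
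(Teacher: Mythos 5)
Your proposal follows essentially the same route as the paper's proof: using the representation of $A_H(x)$ from Lemma \ref{binset} and a first-order Taylor expansion to write $f_{\mathrm{P},H}(x)-f(x)=\bigl(\tfrac{1}{2}-u\bigr)^{\top}RS^{-1}\nabla f(x)+O\bigl(c_L\overline{h}_0^{1+\alpha}\bigr)$ with $u$ the (conditionally) uniform fractional part, then computing the second moment of the leading term via orthogonality of $R$ and $\mathrm{Var}(u_i)=1/12$ to get the lower bound $\tfrac{d}{12}\underline{c}_f'^2c_0^2\overline{h}_0^2$, and finally using the smallness condition on $\overline{h}_0$ to keep the remainder from cancelling it. The only looseness is in the final constant (the Minkowski step as you state it yields $\tfrac{\sqrt{d}}{4\sqrt{3}}\underline{c}_f'c_0\overline{h}_0$ rather than $\tfrac{\sqrt{d}}{4}\underline{c}_f'c_0\overline{h}_0$), a harmless constant-factor discrepancy that the paper's own final step shares.
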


\subsubsection{Bounds on the Estimation Error}\label{subsubsec::EstError1}

The first proposition analyzes the estimation error term for ensemble density estimators.

\begin{proposition} \label{OracleInequality::LOne}
Let the histogram transform $H_n$ be defined as in \eqref{HistogramTransform} with bandwidth $h_n$ satisfying Assumption \ref{assumption::h} with $\overline{h}_{0,n} \leq 1$.
Moreover, assume that $\mathcal{X} \subset B_r \subset \mathbb{R}^d$ and the density function $f$ satisfies $\|f\|_{L_{\infty}(\mu)} < \infty$.
Then 
for all $x \in B^+_{r, \sqrt{d} \cdot \overline{h}_{0,n}}$, all $\tau > 0$,
and all $n \geq N_0$ with $N_0 := \max\{e, 2K, \mu(B_r)\}$ and 
$K$ as in Lemma \ref{ScriptBhCoveringNumber},
there holds
\begin{align*}
\|f_{\mathrm{D},\mathrm{E}}-f_{\mathrm{P},\mathrm{E}}\|_{L_{\infty}(\mu)}
& \leq \sqrt{\frac{2 \|f\|_{L_{\infty}(\mu)} (\tau + 2^{d+4} \log n +  2^{d+2}\log(1/\underline{h}_{0,n}^d))}{n\underline{h}_{0,n}^d}}
\nonumber\\
& \phantom{=}
+ \frac{2(\tau+2^{d+4}\log n+2^{d+2}\log(1/\underline{h}_{0,n}^d))}{3n\underline{h}_{0,n}^d} + \frac{2}{n}
\end{align*}
with probability $\mathrm{P}^n$ at least $1 - e^{-\tau}$.
\end{proposition}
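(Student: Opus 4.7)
The plan is to reduce the ensemble estimation error to the single-estimator bound of Proposition \ref{OracleInequalityInftyNorm} via the triangle inequality, and then argue that the single-estimator bound is in fact a \emph{uniform} concentration bound over the class of histogram transforms, so that it holds simultaneously for all $T$ members of the ensemble without any additional price.

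\textbf{Step 1: Reduce via triangle inequality.} By the linearity of the definitions in \eqref{histogramensemble} and \eqref{equ::interes}, I would write
\begin{align*}
\|f_{\mathrm{D},\mathrm{E}} - f_{\mathrm{P},\mathrm{E}}\|_{L_{\infty}(\mu)}
= \biggl\| \frac{1}{T} \sum_{t=1}^T \bigl( f_{\mathrm{D},H_t} - f_{\mathrm{P},H_t} \bigr) \biggr\|_{L_{\infty}(\mu)}
\leq \frac{1}{T} \sum_{t=1}^T \|f_{\mathrm{D},H_t} - f_{\mathrm{P},H_t}\|_{L_{\infty}(\mu)}.
\end{align*}
Thus it suffices to bound each summand by the stated right-hand side $B_n(\tau)$ with $\mathrm{P}^n$-probability at least $1 - e^{-\tau}$, \emph{uniformly} over $t = 1, \ldots, T$.

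\textbf{Step 2: Uniform single-estimator bound.} Proposition \ref{OracleInequalityInftyNorm} bounds $\|f_{\mathrm{D},H_n} - f_{\mathrm{P},H_n}\|_{L_{\infty}(\mu)}$ by $B_n(\tau)$ with $\nu_n$-probability at least $1 - e^{-\tau}$, and crucially, the bound $B_n(\tau)$ does not depend on the particular realization of $H$ (it only involves $\|f\|_{L_{\infty}(\mu)}$, $n$, $\underline{h}_{0,n}$, $d$ and $\tau$). Inspecting the underlying machinery via Lemma \ref{FundamentalLemma}(ii), the VC bound of Lemma \ref{VCindex}, and the covering-number estimate of Lemma \ref{ScriptBhCoveringNumber}, the bound is derived from uniform concentration of $|\mathbb{E}_{\mathrm{D}} \eins_A - \mathbb{E}_{\mathrm{P}} \eins_A|/\mu(A)$ over \emph{all} cells produced by \emph{any} admissible histogram transform. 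Consequently, the event on which the bound holds is measurable with respect to $D$ alone and has $\mathrm{P}^n$-probability at least $1 - e^{-\tau}$; in particular, conditionally on this $\mathrm{P}^n$-event, the bound holds simultaneously for $H_1, \ldots, H_T$, whatever their joint distribution under $\mathrm{P}_H^{\otimes T}$.

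\textbf{Step 3: Assemble.} On the favorable $\mathrm{P}^n$-event of Step~2, each summand in the triangle-inequality bound of Step~1 is at most $B_n(\tau)$, hence the average is at most $B_n(\tau)$ as well, yielding the advertised bound with $\mathrm{P}^n$-probability at least $1 - e^{-\tau}$. The only potential obstacle is verifying that Proposition \ref{OracleInequalityInftyNorm} is genuinely a \emph{uniform}-over-$H$ statement rather than a pointwise-in-$H$ one; this is precisely what the covering-number/VC apparatus of Lemmas \ref{VCindex}--\ref{ScriptBhCoveringNumber} is designed to deliver, so the price paid is the $2^{d+2}\log(1/\underline{h}_{0,n}^d)$ term already appearing in the bound and no further union-bound over $t$ is needed. (Alternatively, if one preferred a more cautious accounting, a union bound across the $T$ histograms would cost only an additive $\log T$, which does not affect the rate for the polynomial choices of $T$ used later in Theorem \ref{cor::ensemblerateP}.)
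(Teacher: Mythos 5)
Your proposal is correct and follows essentially the same route as the paper: the paper likewise bounds $\|f_{\mathrm{D},\mathrm{E}}-f_{\mathrm{P},\mathrm{E}}\|_{L_{\infty}(\mu)}$ by the average (indeed the supremum) of the single-estimator errors, and then controls $\sup_{t}\|f_{\mathrm{D},H_t}-f_{\mathrm{P},H_t}\|_{L_{\infty}(\mu)}$ by $\sup_{A\in\pi_h}|\mathbb{E}_{\mathrm{D}}\eins_A-\mathbb{E}_{\mathrm{P}}\eins_A|/\mu(A)$, whose Bernstein-plus-covering bound is uniform over all admissible histogram transforms and hence defines a single $\mathrm{P}^n$-event needing no union bound over $t$. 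The only cosmetic difference is that the paper re-runs the net/Bernstein argument inside the proof rather than citing Proposition \ref{OracleInequalityInftyNorm} and observing its uniformity in $H$, which is exactly your Step 2.
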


Here, we further find a lower bound of the estimation error for single density estimators.

\begin{proposition} \label{OracleInequality::LTwoCounter}
Let the histogram transform $H_n$ be defined as in \eqref{HistogramTransform} with bandwidth $h_n$ satisfying Assumption \ref{assumption::h}. Moreover, let the density function $f \in C^{1,\alpha}$ and $\mathcal{A}_f$ be the set \eqref{DegenerateSetF}. Then for all $x \in B_{r, \sqrt{d} \cdot \overline{h}_{0,n}}^+ \cap \mathcal{A}_f$ and all $n \geq N'$ with 
\begin{align} \label{MinimalNumberSample}
N' := \min \biggl\{ n \in \mathbb{N} :
\overline{h}_{0,n} \leq  
\min \biggl\{
\biggl( \frac{d \sqrt{d}}{2} \biggr)^{\frac{1}{\alpha}},
\frac{\underline{c}_f}{2 d \sqrt{d} c_L},
\biggl( \frac{1}{4 \overline{c}_f} \biggr)^{\frac{1}{d}}
\biggr\}
\biggr\},
\end{align}
there holds
\begin{align}	\label{variancetermP}
\|f_{\mathrm{D},H_n} - f_{\mathrm{P},H_n}\|_{L_{\infty}(\mu)}
\geq \sqrt{\frac{\underline{c}_f}{4 n\overline{h}_{0,n}^{d}}}
\end{align}
in the sense of $L_2(\mathrm{P}^n)$-norm.
\end{proposition}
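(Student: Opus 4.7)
The plan is to reduce the $L_\infty$-norm lower bound to a pointwise variance computation, with $H_n$ held fixed throughout (the $L_2(\mathrm{P}^n)$-norm only averages over the data). Fix any $x_0 \in B_{r, \sqrt{d}\overline{h}_{0,n}}^+ \cap \mathcal{A}_f$. The trivial inequality $\|g\|_{L_\infty(\mu)}^2 \geq |g(x_0)|^2$ combined with $f_{\mathrm{P},H_n}(x_0) = \mathbb{E}_{\mathrm{P}^n} f_{\mathrm{D},H_n}(x_0)$ gives
\begin{align*}
\mathbb{E}_{\mathrm{P}^n} \|f_{\mathrm{D},H_n} - f_{\mathrm{P},H_n}\|_{L_\infty(\mu)}^2
\geq \mathbb{E}_{\mathrm{P}^n} |f_{\mathrm{D},H_n}(x_0) - f_{\mathrm{P},H_n}(x_0)|^2
= \mathrm{Var}_{\mathrm{P}^n}\bigl( f_{\mathrm{D},H_n}(x_0) \bigr).
\end{align*}
So it is enough to prove $\mathrm{Var}_{\mathrm{P}^n}( f_{\mathrm{D},H_n}(x_0) ) \geq \underline{c}_f /(4 n \overline{h}_{0,n}^d)$.

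Next I would invoke the explicit form \eqref{equ::EMRHdensity}: $f_{\mathrm{D},H_n}(x_0)$ is a scaled sum of i.i.d.\ Bernoullis $\eins_{A_{H_n}(x_0)}(X_i)$ with success probability $p := \mathrm{P}(A_{H_n}(x_0))$, so
\begin{align*}
\mathrm{Var}_{\mathrm{P}^n}\bigl( f_{\mathrm{D},H_n}(x_0) \bigr) = \frac{p(1-p)}{n\, \mu(A_{H_n}(x_0))^2}.
\end{align*}
It remains to lower bound $p$, lower bound $1-p$, and upper bound $\mu(A_{H_n}(x_0))$; each bin condition in \eqref{MinimalNumberSample} is designed to deliver one of these.

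The three bin-level facts I would establish are: (a) $\mu(A_{H_n}(x_0)) \leq \overline{h}_{0,n}^d$, which is immediate from Assumption \ref{assumption::h} since every side length of the (rotated-axis) cell equals some $h_i \leq \overline{h}_{0,n}$; (b) $p \geq (\underline{c}_f/2)\, \mu(A_{H_n}(x_0))$; and (c) $1-p \geq 1/2$. For (b) and (c), note that the bin $A_{H_n}(x_0)$ has diameter at most $\sqrt{d}\,\overline{h}_{0,n}$, and since $x_0 \in B_{r,\sqrt{d}\overline{h}_{0,n}}^+$, the whole bin lies inside $B_r$. The gradient bound $\|\nabla f\| \leq c_L$ from Definition \ref{def::Cp}(i) with $\ell = 1$ gives $|f(y) - f(x_0)| \leq c_L \sqrt{d}\, \overline{h}_{0,n}$ for $y \in A_{H_n}(x_0)$. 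Under $\overline{h}_{0,n} \leq \underline{c}_f/(2 d\sqrt{d}\, c_L)$ (part of \eqref{MinimalNumberSample}), this deviation is at most $\underline{c}_f/(2d) \leq f(x_0)/2$, so $f(y) \geq \underline{c}_f/2$ on the bin and integration yields (b). Similarly $f(y) \leq 2\overline{c}_f$ on the bin, so using $\overline{h}_{0,n} \leq (1/(4\overline{c}_f))^{1/d}$ (the last constraint in \eqref{MinimalNumberSample}) gives $p \leq 2\overline{c}_f \cdot \overline{h}_{0,n}^d \leq 1/2$, which is (c).

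Assembling the pieces,
\begin{align*}
\mathrm{Var}_{\mathrm{P}^n}\bigl(f_{\mathrm{D},H_n}(x_0)\bigr)
\geq \frac{(\underline{c}_f/2)\,\mu(A_{H_n}(x_0)) \cdot (1/2)}{n\,\mu(A_{H_n}(x_0))^2}
= \frac{\underline{c}_f}{4 n\, \mu(A_{H_n}(x_0))}
\geq \frac{\underline{c}_f}{4 n\, \overline{h}_{0,n}^d},
\end{align*}
which yields \eqref{variancetermP}. I do not expect a serious technical obstacle here; the argument is essentially the Bernoulli-variance lower bound for a histogram cell of small but positive mass. The only real care needed is bookkeeping: matching each of the three conditions in \eqref{MinimalNumberSample} against the corresponding step (gradient-based control of $f$ from below for (b), control from above together with the volume estimate for (c), and Assumption \ref{assumption::h} for the volume bound in (a)). The role of $x_0 \in \mathcal{A}_f$ is precisely to supply the two-sided bound $\underline{c}_f \leq f(x_0) \leq \overline{c}_f$ that drives (b) and (c); the derivative lower bound in the definition \eqref{DegenerateSetF} is not needed for this proposition (it will be used in the companion approximation-error result, Proposition \ref{ApproximationError::LTwoCounter}).
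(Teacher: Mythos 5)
Your proposal is correct and follows essentially the same route as the paper: both reduce the $L_\infty$/$L_2(\mathrm{P}^n)$ lower bound to the pointwise variance of the Bernoulli sum $\frac{1}{n\mu(A_{H_n}(x))}\sum_i \eins_{A_{H_n}(x)}(X_i)$, show $\mathrm{P}(A_{H_n}(x))\le 1/2$ and $\mathrm{P}(A_{H_n}(x))\gtrsim \underline{c}_f\,\mu(A_{H_n}(x))$ on $\mathcal{A}_f$, and use $\mu(A_{H_n}(x))\le\overline{h}_{0,n}^d$. The only (harmless) deviation is that you control the cell probability by the plain gradient bound $|f(y)-f(x)|\le c_L\sqrt{d}\,\overline{h}_{0,n}$ rather than the Taylor representation of $\mathrm{P}(A_{H_n}(x))$ from Lemma \ref{binset} used in the paper, which makes the first constraint in \eqref{MinimalNumberSample} unnecessary in your argument and, by cancelling one factor of $\mu(A_{H_n}(x))$ before invoking $\mu(A_{H_n}(x))\le\overline{h}_{0,n}^d$, gives a slightly cleaner bookkeeping of the volume bound.
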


\section{Numerical Experiments} \label{sec::numerical_experiments}

In this section, we present the computational experiments that we have carried out. Firstly, in Section \ref{sec::subsec::experisetup}, we introduce some preliminary work including a brief account for the generation process of our histogram transforms, following by two effective measures of estimation accuracy named \emph{MAE} and \emph{ANLL}. Then we consider the origin algorithm proposed by \cite{Ezequiel2013HT}, referred to as na\"{i}ve histogram transform ensembles (NHTE) which we conduct comprehensive theoretical analysis in the preceding sections. We extend the existing work to an illustrative example, which illustrates how ensemble estimators outperform the single ones in a experimental point of view in Section \ref{sec::counter}.

In Section \ref{sec::AdaptiveHTE}, we introduce an adaptive version of histogram transform ensembles based on our proposed adaptive splitting method, termed as the adaptive histogram transform ensembles (AHTE). In Section \ref{sec::subsec::syntheticdata}, we design four types of distributions, study the behavior of our AHTE depending on the values of tunable parameters, and conduct comparisons with other state-of-the-art density estimators. Finally, in Section \ref{sec::subsec::realdata}, we compare our approach with the counterparts for real data in terms of \textit{ANLL}.

\subsection{Experimental Setup} \label{sec::subsec::experisetup}

Recall that $D = \{x_1, \ldots, x_n\}$ are denoted as observations drawn independently from an unknown distribution $\mathrm{P}$ on $\mathbb{R}^d$ with the density $f$.  For the sake of of constructing data sets, we need to detail the elements of histogram transform defined as in \eqref{HistogramTransform}.

\subsubsection{Generation Process for Histogram Transforms}

Firstly, note that the random rotation matrix $R$ is generated in the manner coinciding with Section \ref{sec::HTE}. For the elements of the scaling matrix $S$, applying the well known Jeffreys prior for scale parameters referred to \cite{Jeffreys1946An}, we draw $\log (s_i)$ from the uniform distribution over certain real-valued interval $[\log(\underline{s}_0),\log(\overline{s}_0)]$ with
\begin{align*}
	\log (\underline{s}_0) & := s_{\min} +\log (\widehat{s}),
\\
	\log (\overline{s}_0) & := s_{\max} +\log (\widehat{s}),
\end{align*}
where $s_{\min}, s_{\max} \in \mathbb{R}$ are tunable parameters with $s_{\min} < s_{\max}$ and the scale parameter $\widehat{s}$ is the inverse of the bin width $\widehat{h}$ measured on the input space, which is defined by
\begin{align*}
\widehat{s} := (\widehat{h})^{-1} = (3.5 \sigma)^{-1} n^{\frac{1}{2+d}}.
\end{align*}
Here, the standard deviation $\sigma := \sqrt{\mathrm{trace}(V)/d}$ with $V := \frac{1}{n-1} \sum_{i=1}^n (x_i - \bar{x})(x_i-\bar{x})^{\top}$ and $\bar{x} := \frac{1}{n} \sum_{i=1}^n x_i$ combines the information from all the dimensions of the input space.

\subsubsection{Performance Evaluation Criteria}

When it comes to the empirical performances for various different density estimators $\widehat{f}$, we introduce two comparisons of options conducted over $m$ test samples $\{x_j, j=1,\ldots,m\}$. In the case where the true density function $f$ is known, by convention, we adopt the following Mean Absolute Error (\emph{MAE}):
\begin{align*}
\emph{\text{MAE}} (\widehat{f}) = \frac{1}{m} \sum_{j = 1}^m |\widehat{f}(x_j) - f(x_j)|.
\end{align*}
Obviously, lower \emph{MAE} implies better performance of a density estimator $\widehat{f}$.

Taking the real data application into consideration, namely, when the true density $f$ is unknown, we turn to the measure of accuracy given by the Average Negative Log-Likelihood (\emph{ANLL}):
\begin{align*}
\emph{\text{ANLL}}(\widehat{f}) = -\frac{1}{m}\sum_{j=1}^m\log \widehat{f}(x_j),
\end{align*}
where $\widehat{f}(x_j)$ represents the estimated probability density for the test sample $x_j$ and the lower the \emph{ANLL} is, the better estimation we obtain. However, the case $\widehat{f}(x) = 0$ would imply $\emph{\text{ANLL}}(\widehat{f}) = \infty$, an undesirable state. We prefer to avoid this issue altogether by substituting all density estimation $\widehat{f}(x)$ with $\widehat{f}(x) + \varepsilon$, where $\varepsilon$ is a infinitesimal number which can be obtained by function {\tt numpy.spacing(1)} in Python. Last, note that the test set ($m$ samples) must be disjoint with the training set ($n$ samples) in all cases.

\subsection{Comparisons between Ensemble and Single Estimators}\label{sec::counter}

In order to give a more comprehensive understanding of this section, the reader will be reminded of the significance to illustrate the benefits of our histogram transform ensembles over single estimator. Therefore, we start this simulation by constructing the above mentioned illustrative example as the synthetic data. To be specific, we base the simulations on one particular distribution construction approach generating a 2-dimensional toy example, where the density function $f : \mathcal{X}^2 \to \mathbb{R}$ is defined by
\begin{align*}
f = \mathrm{Beta}(3,10).
\end{align*}

We emphasize that the bell-shaped density function $f$ is compactly supported and bounded.

In experiment, we generate $1,000$ testing samples, and let the number of training samples vary from $10$ to $1000$. We set hyper-parameters $s_{\min} = 0 $ and $s_{\max} = 1$. Each experiment is repeated  for $50$ times and the average \textit{MAE} is reported as a representative of testing accuracy.

\begin{figure}[H]
\begin{minipage}[t]{0.8\textwidth}  
\centering  
\includegraphics[width=\textwidth]{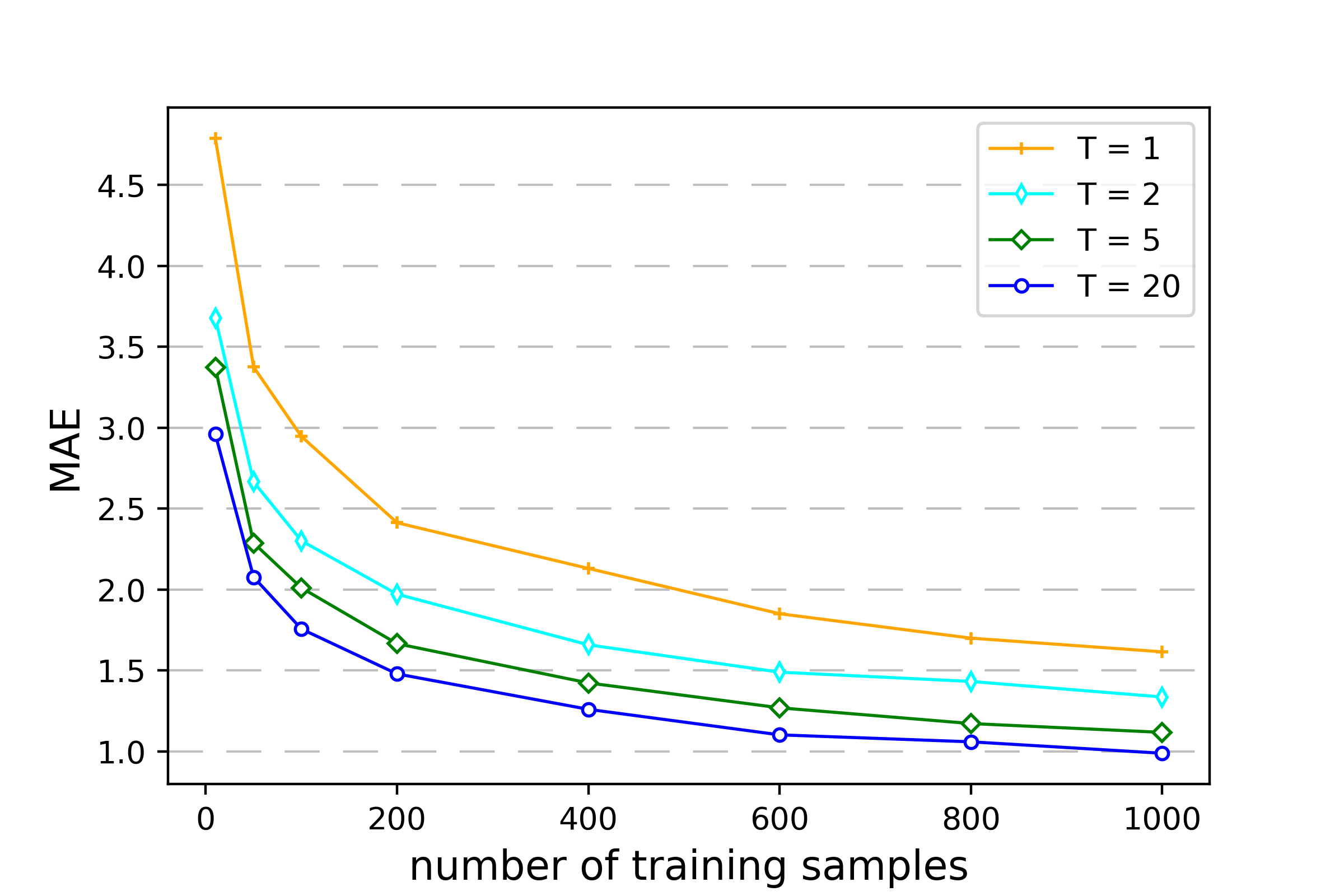}  
\end{minipage}  
\centering  
\caption{\emph{MAE} for different numbers of histogram transforms $T$ applied for the synthetic data.}
\label{fig:ANLLN}
\end{figure}

Figure \ref{fig:ANLLN} captures the \emph{MAE} performance of our model for $T = 1, 2, 5, 20$, respectively. The result is twofold: First of all, lower \textit{ANLL} of the steady state for $T > 1$ states that ensembles behave better than single estimators in terms of accuracy. Moreover, the difference of slope before the curves reach flat illustrates the lower bound of the convergence rate of single estimators to some extent.

\subsection{Adaptive HTE for Density Estimation}\label{sec::AdaptiveHTE}

Till now, the partition processes considered have only performed in an equal-size histogram manner, however, it fails to take into account information resides in data itself, and thus lacks of local adaptivity. Therefore, in the following experiments, we extend the origin histogram transform ensembles, which we will then refer to as the na\"{i}ve histogram transform ensembles (NHTE), to an adaptive version, the adaptive histogram transform ensembles (AHTE). That is, all histogram transforms in the following experiments adopt the adaptive random stretching criterion to significantly improve the balancing property of splits and hence to increase the accuracy.

\subsubsection{Adaptive Splitting}

The \textit{adaptive splitting} technique helps formulate a data dependent partition. In general, adaptive histogram transforms split more on sample-dense areas while split less on sample-sparse areas, for we adopt the recursive-based splittings, that is, the splitting process ceases with the number of samples containing in a cell less than a certain amount, denoted by the hyper-parameter \textit{min\_samples\_split}. However, it's worth pointing out that instead of selecting the bin indices as the round points, where each cell shares the same size, this adaptive method aims at diversifying the samples contained in each cell. In fact, high-dimensional samples are typically sparse. Therefore, it is of great necessity to recognize the area where samples densely cluster and to ``cut it out''. Otherwise, the high volume of the sample-sparse areas will significantly lower the estimated density in the whole cell.

Moreover, we impose a stopping criterion when a cell contains less than $m$ samples. Then we focus on every \textit{qualified} cell with enough sample points, and select the to-be-split dimension as the one with the largest ratio, that is, with the least variance, scaled to $[0.5,2.5]$, per range. In the process of split point selection, we choose the split point as a relatively far position from the sample mean. To be specific, if the sample mean is smaller than the $0.6$ quantile, the split point is selected as the $0.618$ quantile, otherwise as the $0.382$ quantile of the sample points.

By this means, we're able to make full use of the potential information containing in samples and to split at the area with as sparse points as possible. Thus, the sample-dense area will not be split too much, and therefore the local estimation of the underlying density function can be guaranteed. A concrete description of the construction process of \textit{adaptive splitting} is shown in the following Algorithm \ref{alg::adaptiveSplitting}.

\begin{algorithm}[htbp]
\caption{Adaptive Splitting}
\label{alg::adaptiveSplitting}
\KwIn{
Transformed sample space $D^\top$ 
;
\\
\quad\quad\quad\quad Minimal number of samples required to split  $m$; \\
\quad\quad\quad\quad Number of splits $p$ initiated as 1.
\\
}
\Repeat{$\max(\textit{number of samples in all cells}) \leq m$}{
$k_t^p$ is the number of cells before the $p$-th split for the $t$-th partition;	
\\	
\For{$j =1 \to k_t^p$}{
	\If{$\textit{number of samples in the j-th cell} > m$}{
		\For{$i = 1 \to d$}{
			$range_i$ denotes the range of dimension $i$; \\
			$var_i$ denotes the scaled variance of dimension $i$; \\
			$ratio_i = \frac{range_i}{var_i}$.	
		}	
		The to-be-split dimension is with the largest ratio. \\
		\eIf{mean $\leq$ $0.6$ quantile}{
			Select the split point as the $0.618$ quantile of this dimension;
		}
		{
			Select the split point as the $0.382$ quantile of this dimension;
		}
	}
	$p++$. \\
}
}
\KwOut{Adaptive partition of the transformed sample space.}
\end{algorithm}

\subsubsection{Adaptive HTE}

To achieve the adaptive version of HTE, we simply apply \textit{adaptive splittings} instead of data independent partitions, so that we are able to take into account more information of samples.

\begin{algorithm}[htbp]
\caption{Adaptive HTE for Density Estimation}
\label{alg::AHTE}
\KwIn{
Training data $D:=(X_1, \ldots, X_n)$;
\\
\quad\quad\quad\quad Number of histogram transforms $T$.
} 
\For{$t =1 \to T$}{

Generate random affine transform matrix
$H_t=R_t$;\\
Apply \textit{adaptive splitting} to the transformed sample space; \\
Apply piecewise constant density estimators to each cell; \\ 
Compute the histogram density estimator $f_{\mathrm{D},H_t}(x)$ induced by $H_t$.
}
\KwOut{The histogram transform ensemble for density estimation is
\begin{align*}
	f_{\mathrm{D},\mathrm{E}}(x)
	= \frac{1}{T} \sum_{t=1}^T f_{\mathrm{D},H_t}(x).
\end{align*}
}
\end{algorithm}

Note that in the adaptive version of HTE, the random affine transform matrix $H_t$ only consists of random rotations. However, this doesn't mean that we abandon  stretchings or translations. As a matter of fact, instead of choosing bin indices, we split differently on different local areas. Thus the \textit{adaptive splittings} can be viewed as a combination of \textit{adaptive stretchings} and \textit{adaptive translations}, with each local area stretched and translated to a different extent according to the sample distribution, only the effect of theses two transforms is achieved by adaptively selecting the split points. Also, the randomness of this adaptive partition is solely provided by random rotation transforms, with the implied stretchings and translations determined by sample distributions.

\subsection{Synthetic Data Analysis} \label{sec::subsec::syntheticdata}

\subsubsection{Synthetic Data Settings}

In this subsection, we start by applying our histogram transform ensembles and other density estimation methods including na\"{i}ve histogram transforms and KDE on several artificial examples. To be specific, we base the simulations on four different types of distribution construction approaches with dimension $d = 2, \ 5$, respectively. More precisely, we assume that the sample vector $X = (X_1, \ldots , X_d)$ are independent with each other and with distributions as follows:
\begin{itemize}
    \item
\textbf{Type I:} 
$f_{\mathrm{I}} = 0.3 * \mathrm{Unif}(0.7,1) + 0.7 * \mathrm{Unif}(0,0.4)$ with $X_i \sim f_{\mathrm{I}}$, for $i=1, \dots, d$;
    \item
\textbf{Type II:} 
$f_{\mathrm{II}} = 0.5 * \mathrm{Beta}(2,10) + 0.5 * \mathrm{Unif}(0.5,1)$ with $X_i \sim f_{\mathrm{II}}$, for $i=1, \dots, d$;
    \item
\textbf{Type III:} 
$f_{\mathrm{III}} = 0.5 * \mathrm{Laplace}(0,\frac{1}{2}) + 0.5 * \mathrm{Unif}(2,4)$ with $X_i \sim f_{\mathrm{III}}$, for $i=1, \dots, d$;
    \item
\textbf{Type IV:} 
$f_{\mathrm{IV}} = \prod_{i=1}^{d-1} f(X_i) \cdot g(X_d)$ with $f = \mathrm{Exp}(0.5)$ and $g = \mathrm{Unif}(0,5)$.
\end{itemize}

The above artificial examples represent different kinds of distributions. The first type $f_{\mathrm{I}}$ is piecewise constant, and the following two, $f_{\mathrm{II}}$ and $f_{\mathrm{III}}$ represent distributions with sectionally continuous probability distribution functions. Note that in the first three distributions, marginal distributions of each dimension are identically distributed, while in the last example $f_{\mathrm{IV}}$, $X_d$ obeys a different distribution from the first $d-1$ dimensions.

\subsubsection{Study of the Parameters}\label{sec::subsec::parameter}
In this subsection, we lubricate the study of parameters $T$ and $m$ in Algorithm \ref{alg::adaptiveSplitting} among different datasets and distinct dimensions. As it mentions above, $T$ stands for the number of partitions in an ensemble and $m$ represents the minimum number of samples required to split an internal node.

We conduct experiments based on synthetic datasets following four types of distributions defined as $f_{\mathrm{I}}$, $f_{\mathrm{II}}$, $f_{\mathrm{III}}$ and $f_{\mathrm{IV}}$. For each distribution, we independently take $2000$ and $10000$ samples on $d=2,\ 5$ as training data and test data respectively. In addition, we have carried out experiments with $T\in\{5,20,100\}$ and $m$ varying respectively from $1$ to $30$ and $1$ to $50$ for \textit{ANLL} and \textit{MAE}, the measurements of accuracy under each parameters combination.

\begin{figure}[htbp]
\centering
{
\begin{minipage}{0.450\textwidth}
	\centering
	\includegraphics[width=\textwidth]{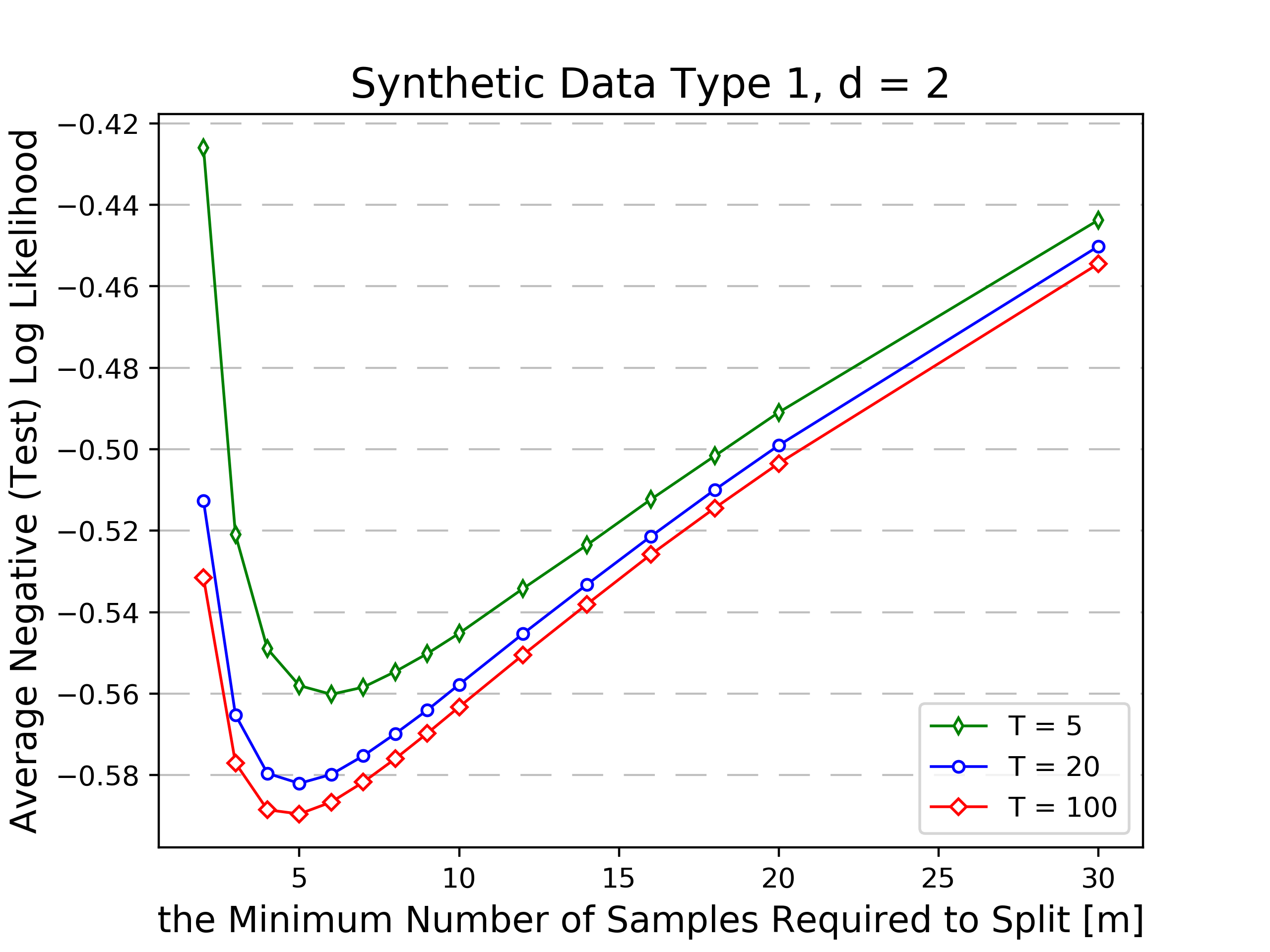}
\end{minipage}
}
{
\begin{minipage}{0.450\textwidth}
	\centering
	\includegraphics[width=\textwidth]{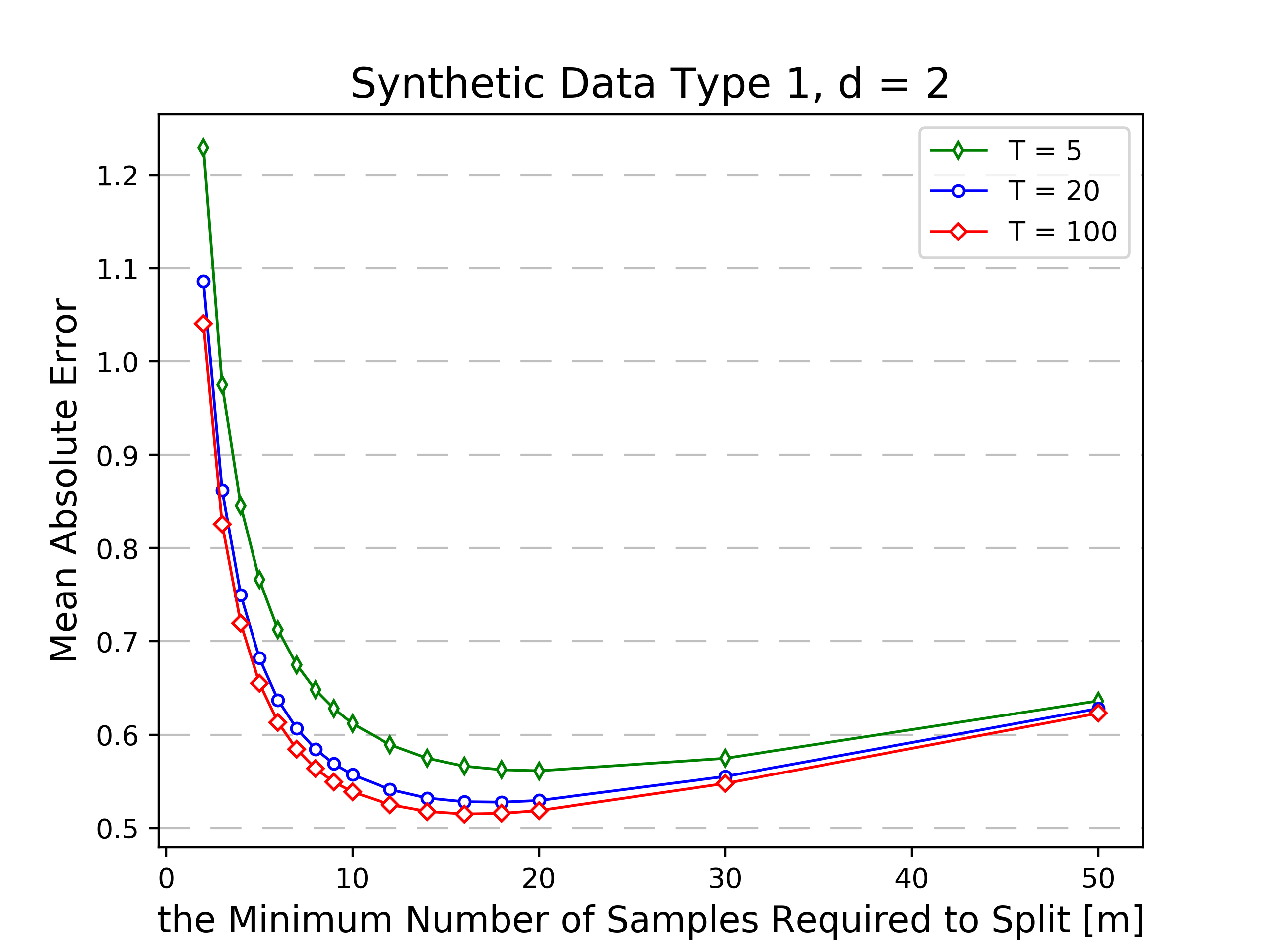}
\end{minipage}
}
{
\begin{minipage}{0.450\textwidth}
	\centering
	\includegraphics[width=\textwidth]{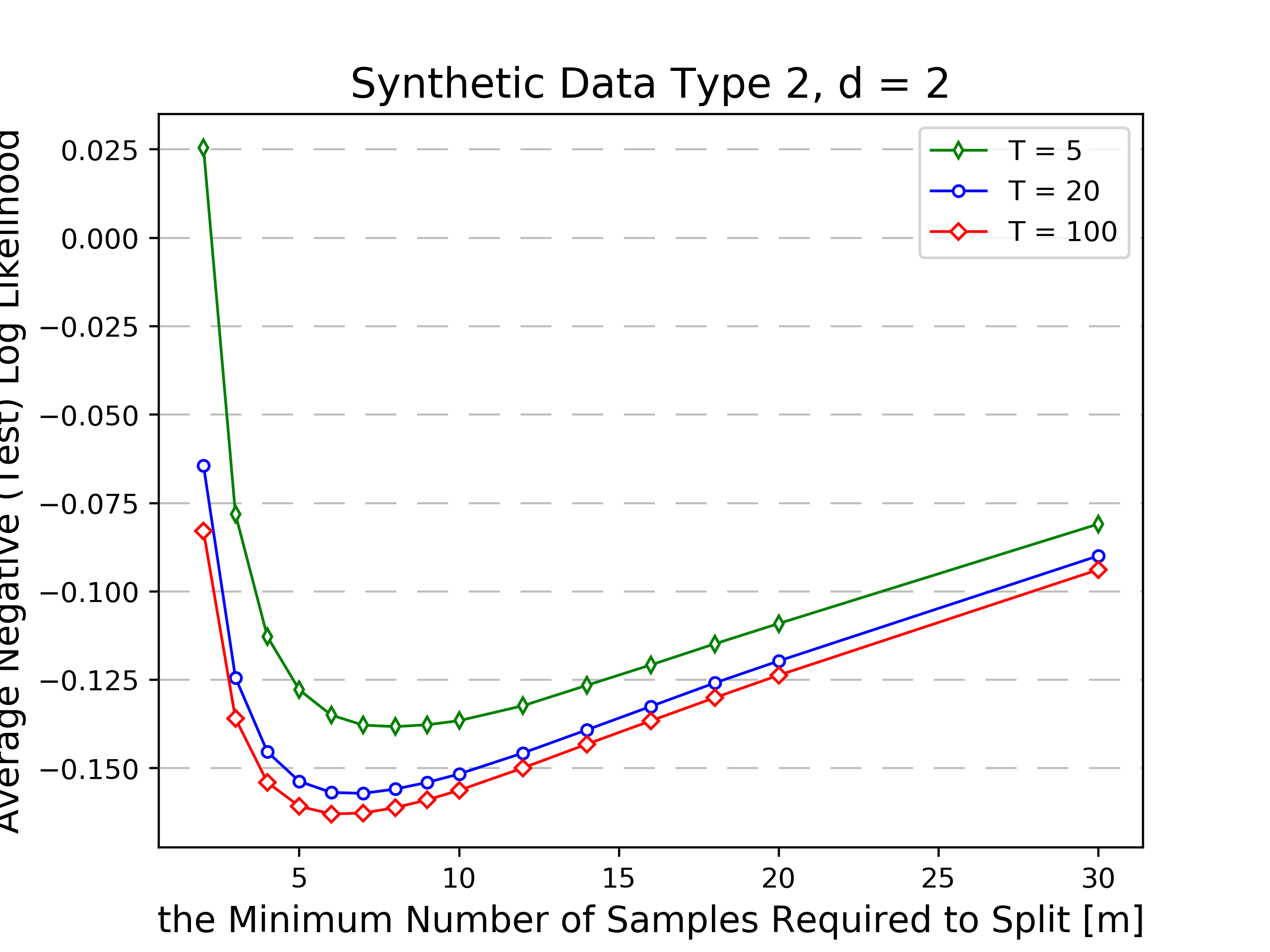}
\end{minipage}
}
{
\begin{minipage}{0.450\textwidth}
	\centering
	\includegraphics[width=\textwidth]{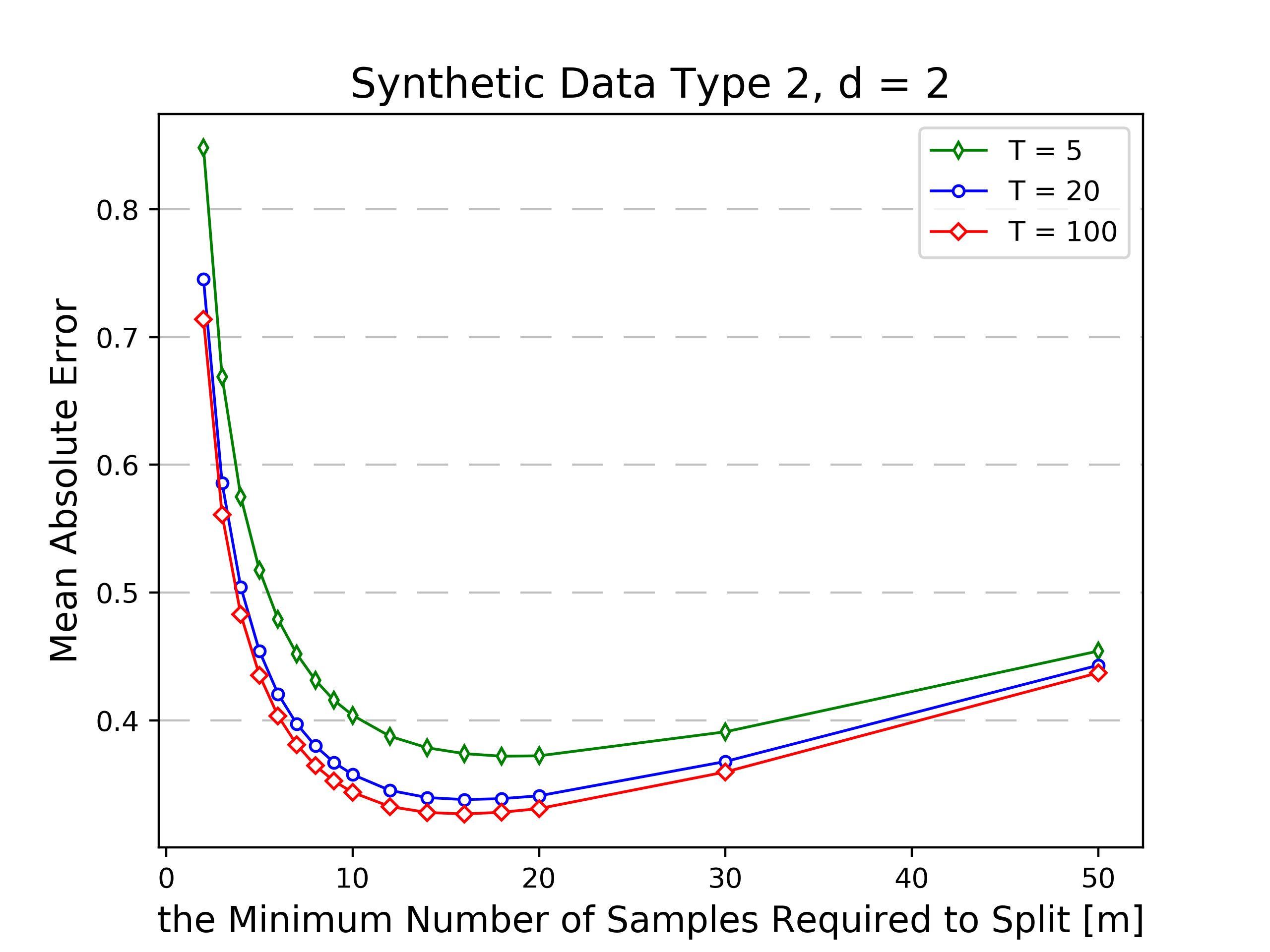}
\end{minipage}
}
{
\begin{minipage}{0.450\textwidth}
	\centering
	\includegraphics[width=\textwidth]{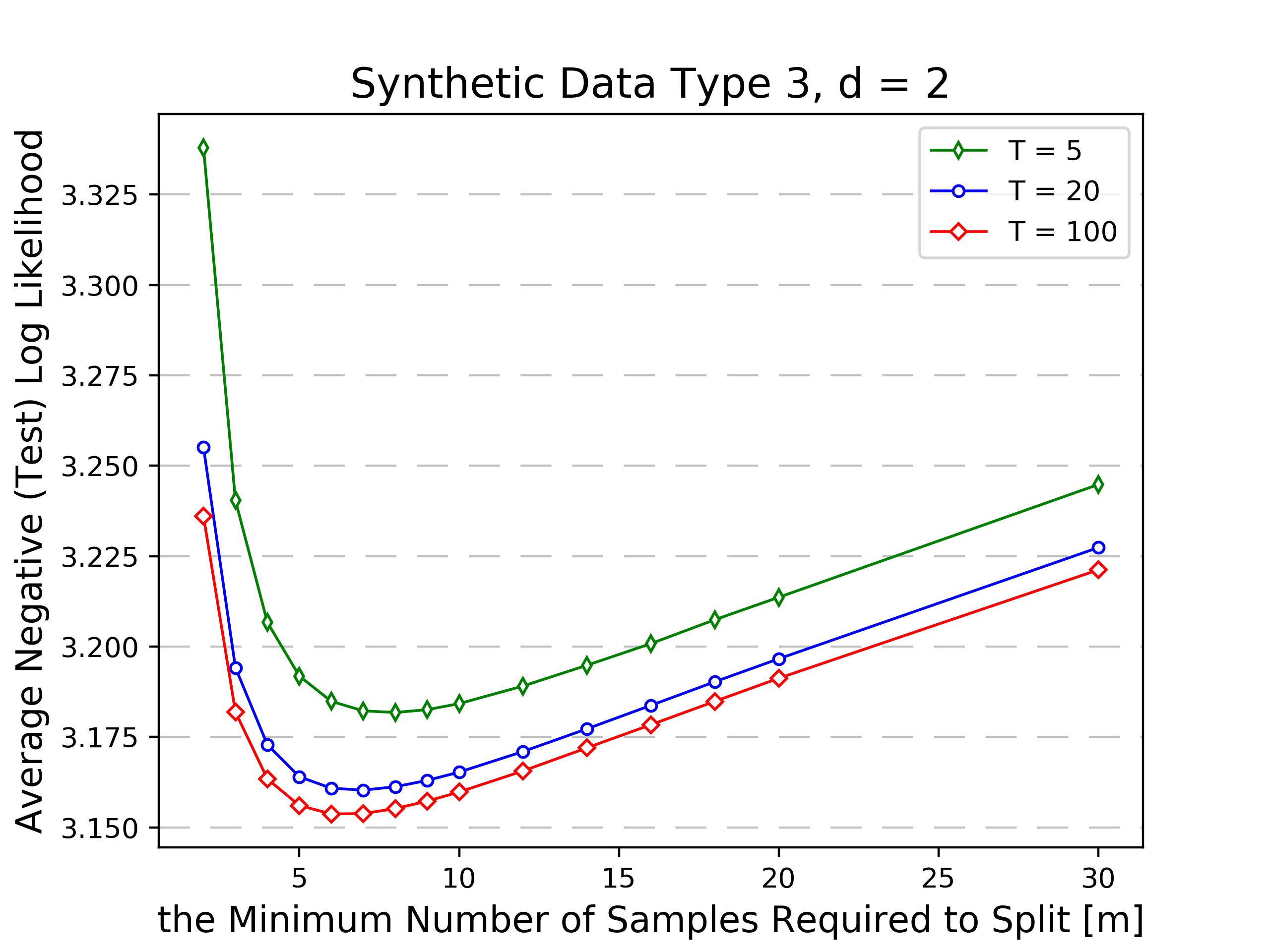}
\end{minipage}
}
{
\begin{minipage}{0.450\textwidth}
	\centering
	\includegraphics[width=\textwidth]{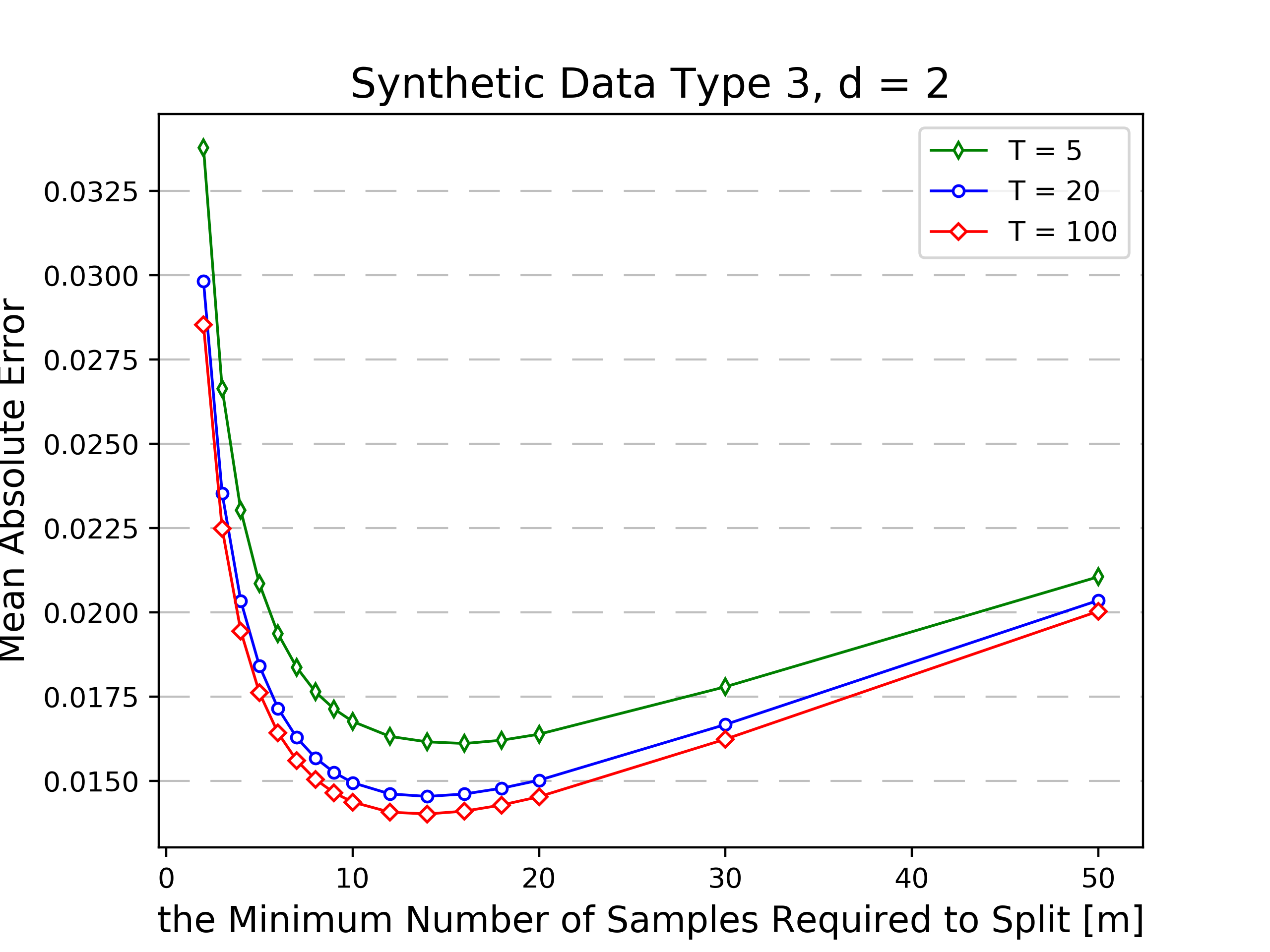}
\end{minipage}
}
\subfigure[The study of parameters with $d=2$, where each row respectively stands for each type of the four distributions. The left column indicates how \textit{ANLL} varies along parameters $m$ and $T$, and the right column shows the variation of \textit{MAE}.]{
{
	\begin{minipage}{0.450\textwidth}
		\centering
		\includegraphics[width=\textwidth]{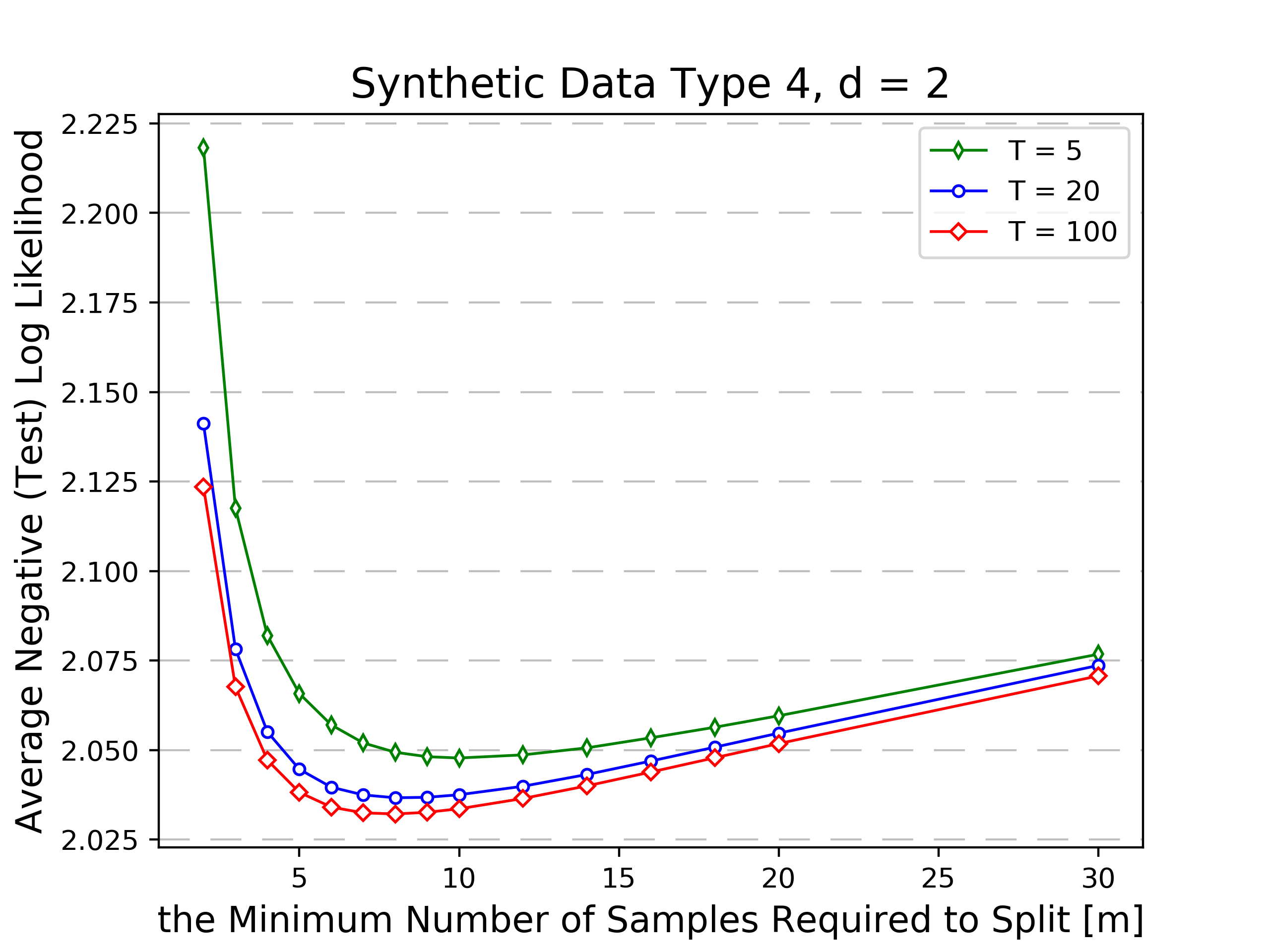}
	\end{minipage}
}
{
	\begin{minipage}{0.450\textwidth}
		\centering
		\includegraphics[width=\textwidth]{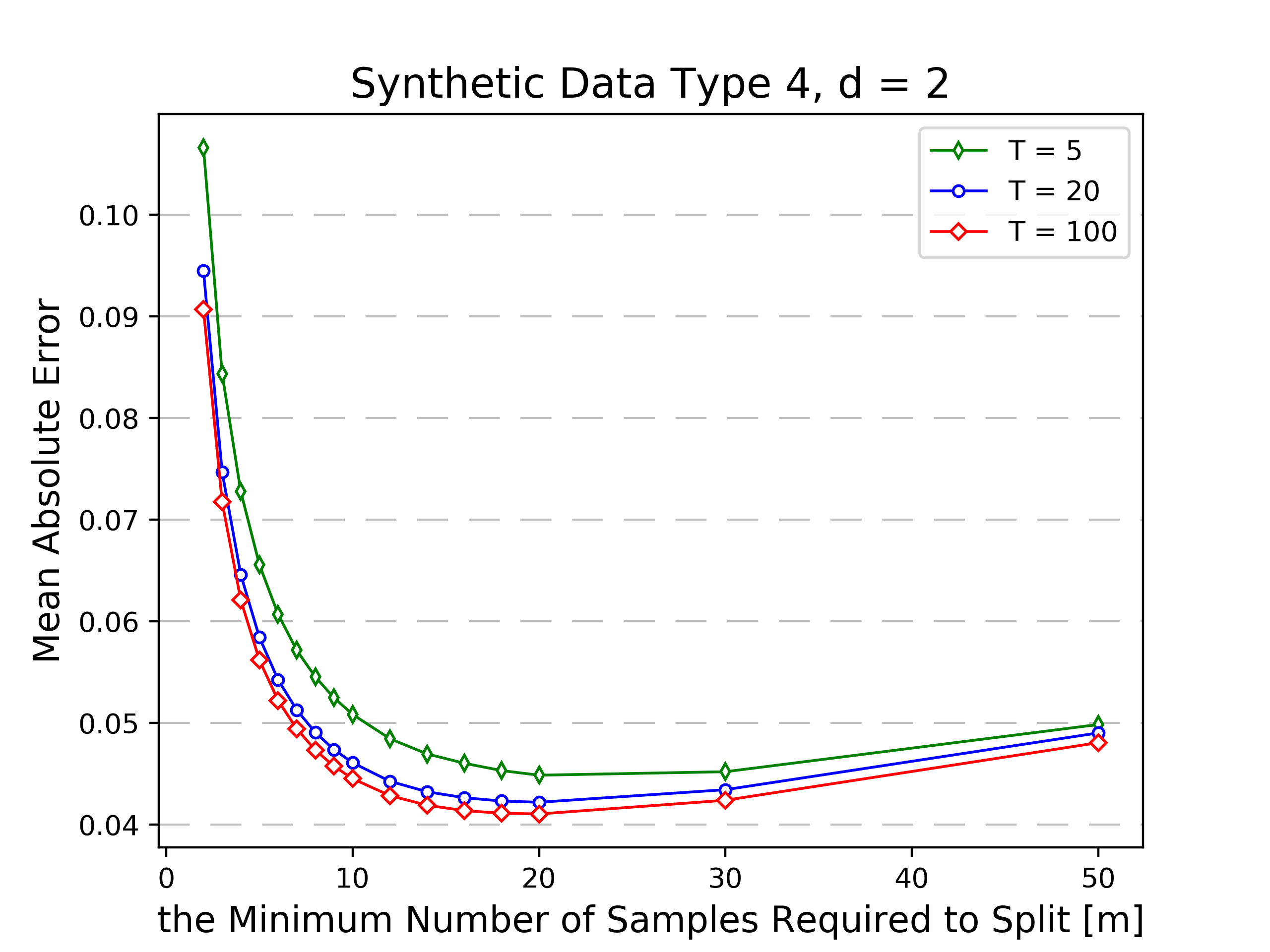}
	\end{minipage}
}
}
\end{figure}
\begin{figure}[htbp]
\centering
{
\begin{minipage}{0.450\textwidth}
	\centering
	\includegraphics[width=\textwidth]{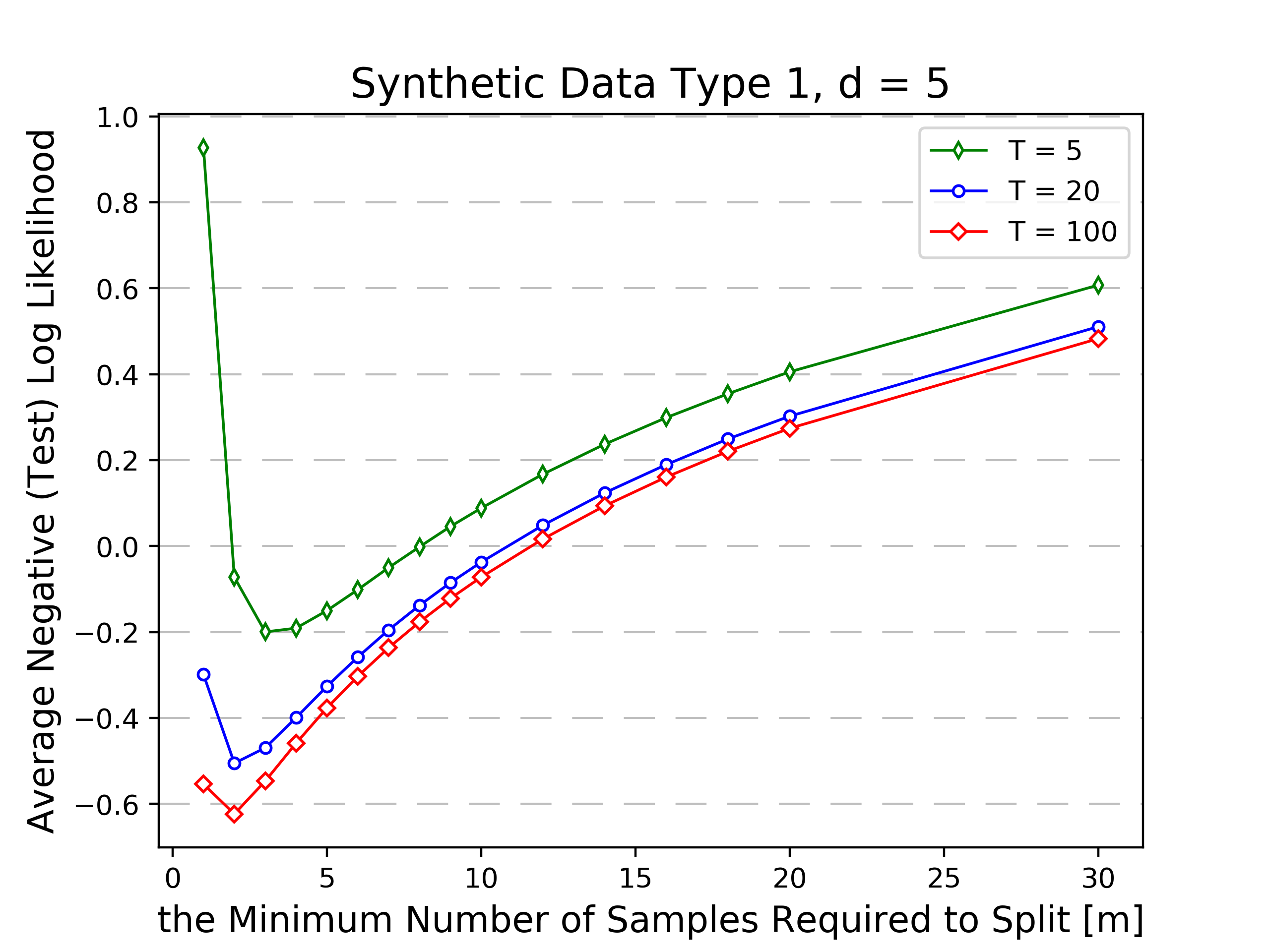}
\end{minipage}
}
{
\begin{minipage}{0.450\textwidth}
	\centering
	\includegraphics[width=\textwidth]{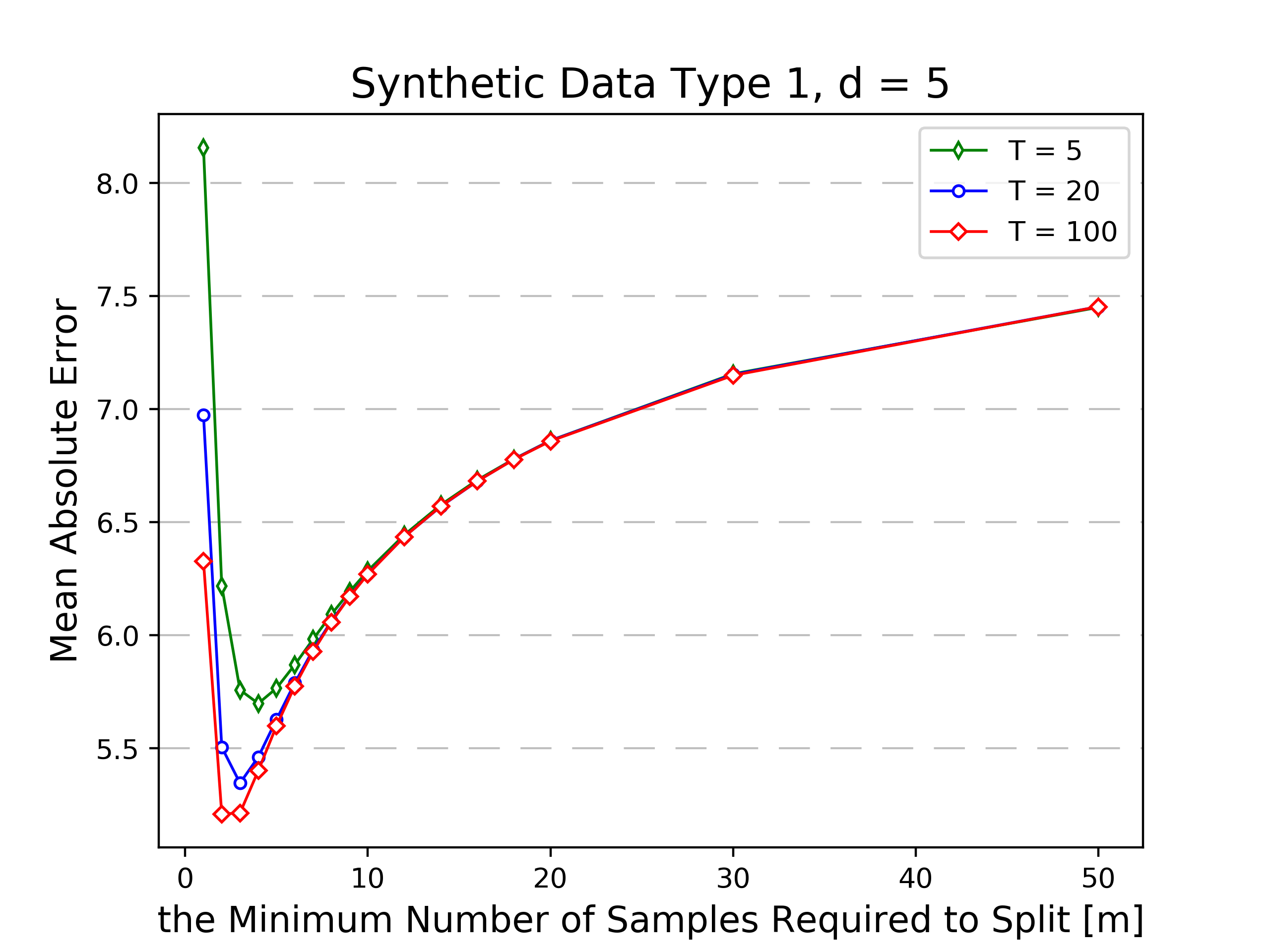}
\end{minipage}
}
{
\begin{minipage}{0.450\textwidth}
	\centering
	\includegraphics[width=\textwidth]{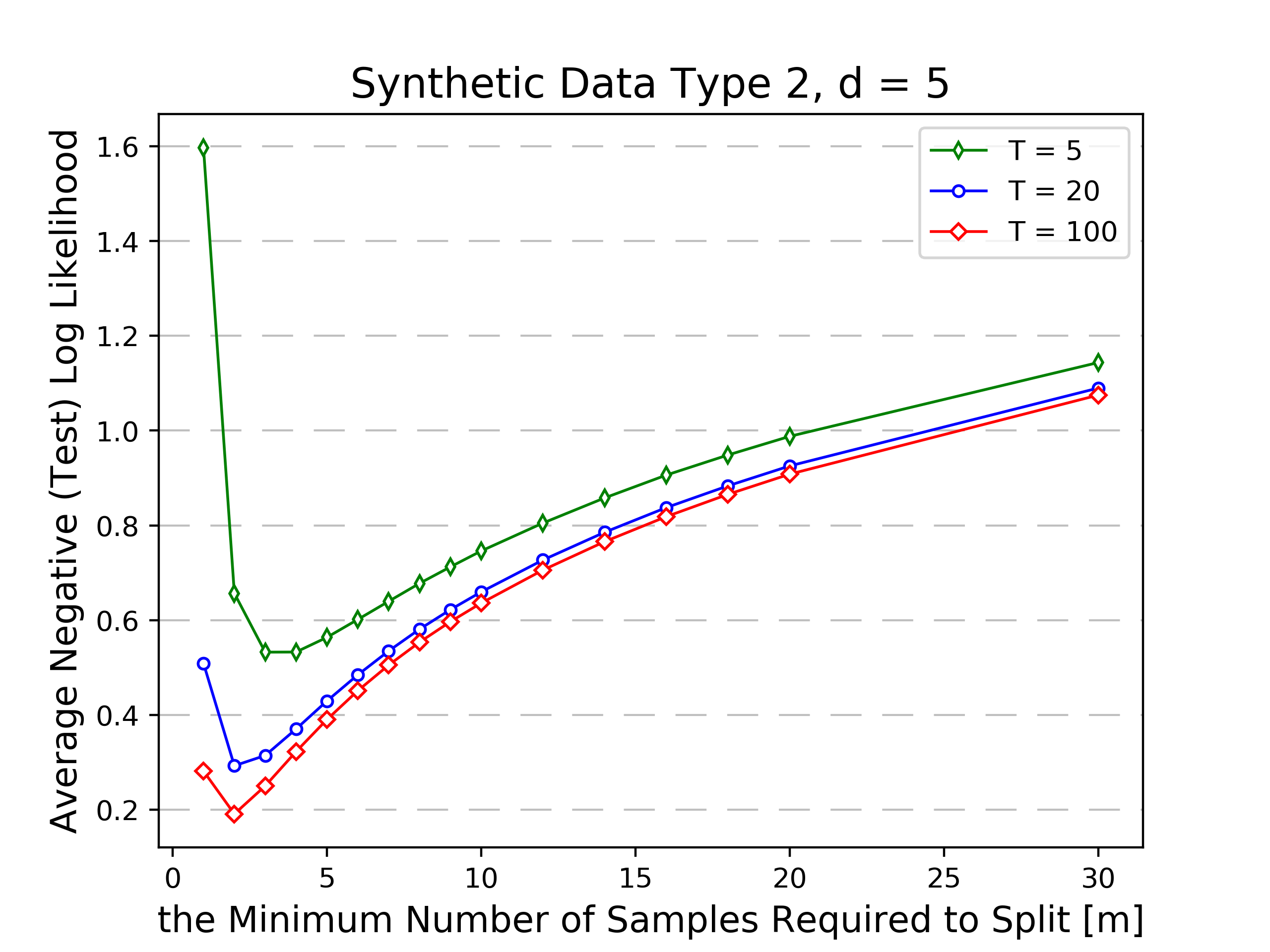}
\end{minipage}
}
{
\begin{minipage}{0.450\textwidth}
	\centering
	\includegraphics[width=\textwidth]{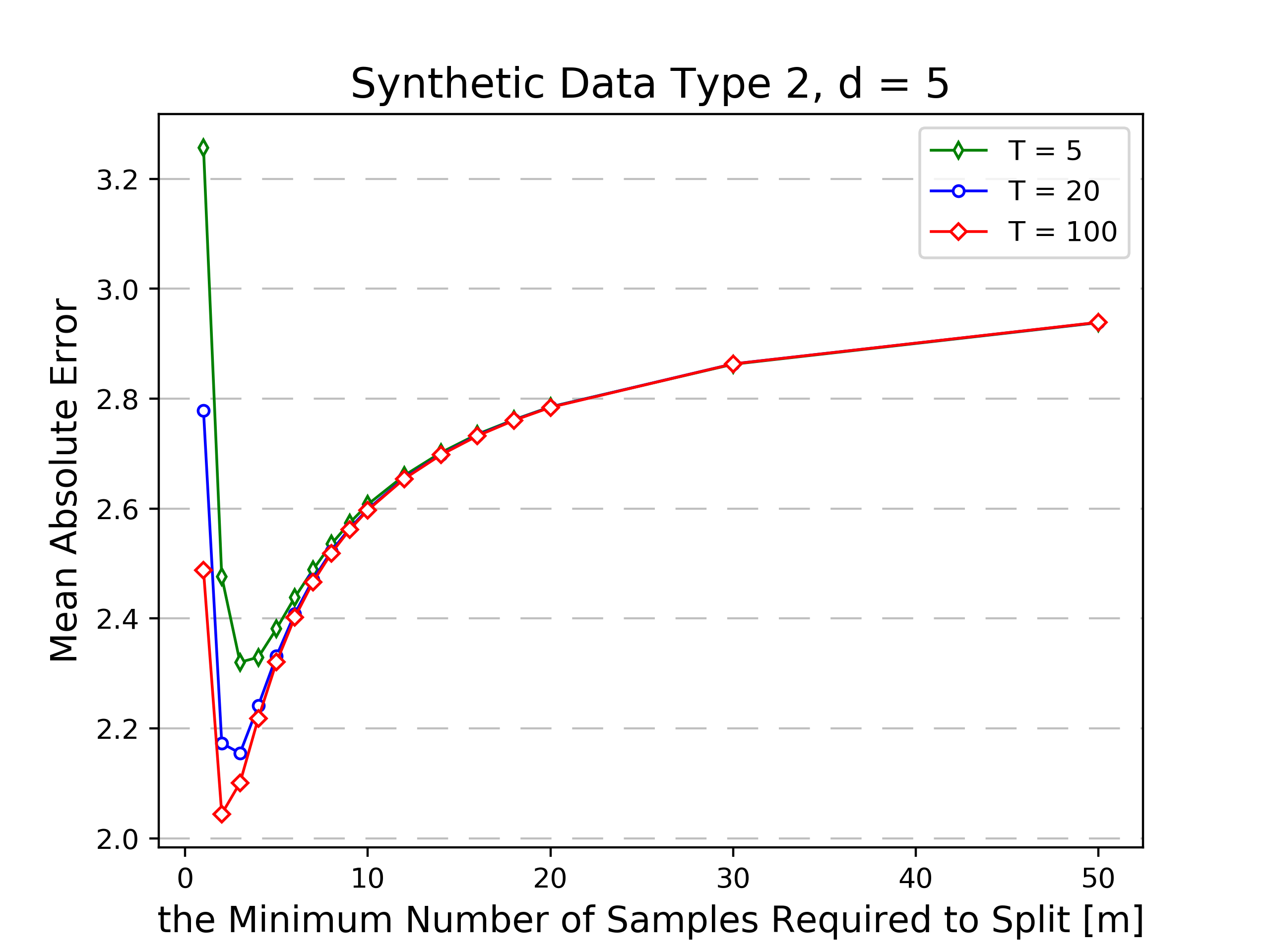}
\end{minipage}
}
{
\begin{minipage}{0.450\textwidth}
	\centering
	\includegraphics[width=\textwidth]{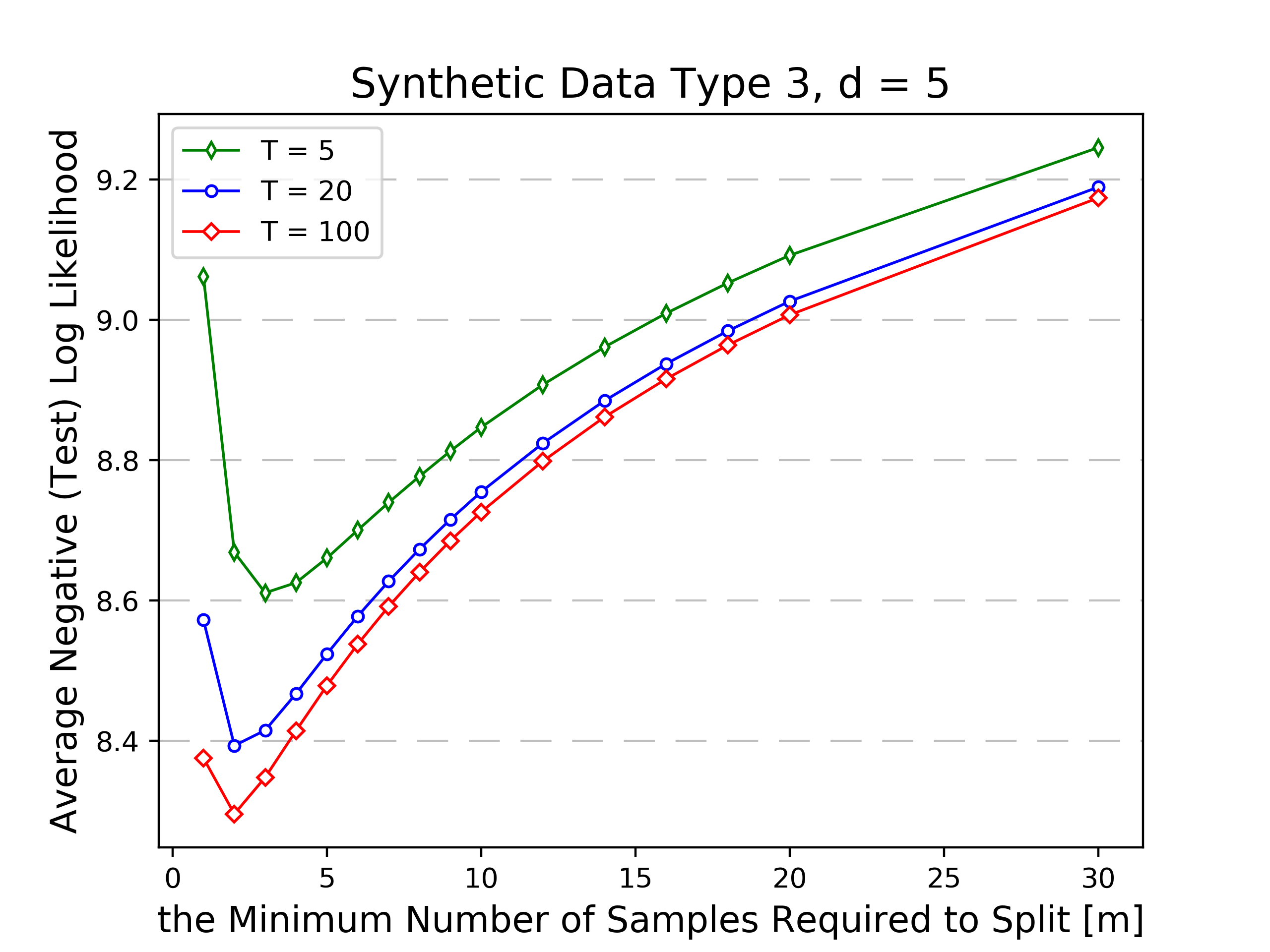}
\end{minipage}
}
{
\begin{minipage}{0.450\textwidth}
	\centering
	\includegraphics[width=\textwidth]{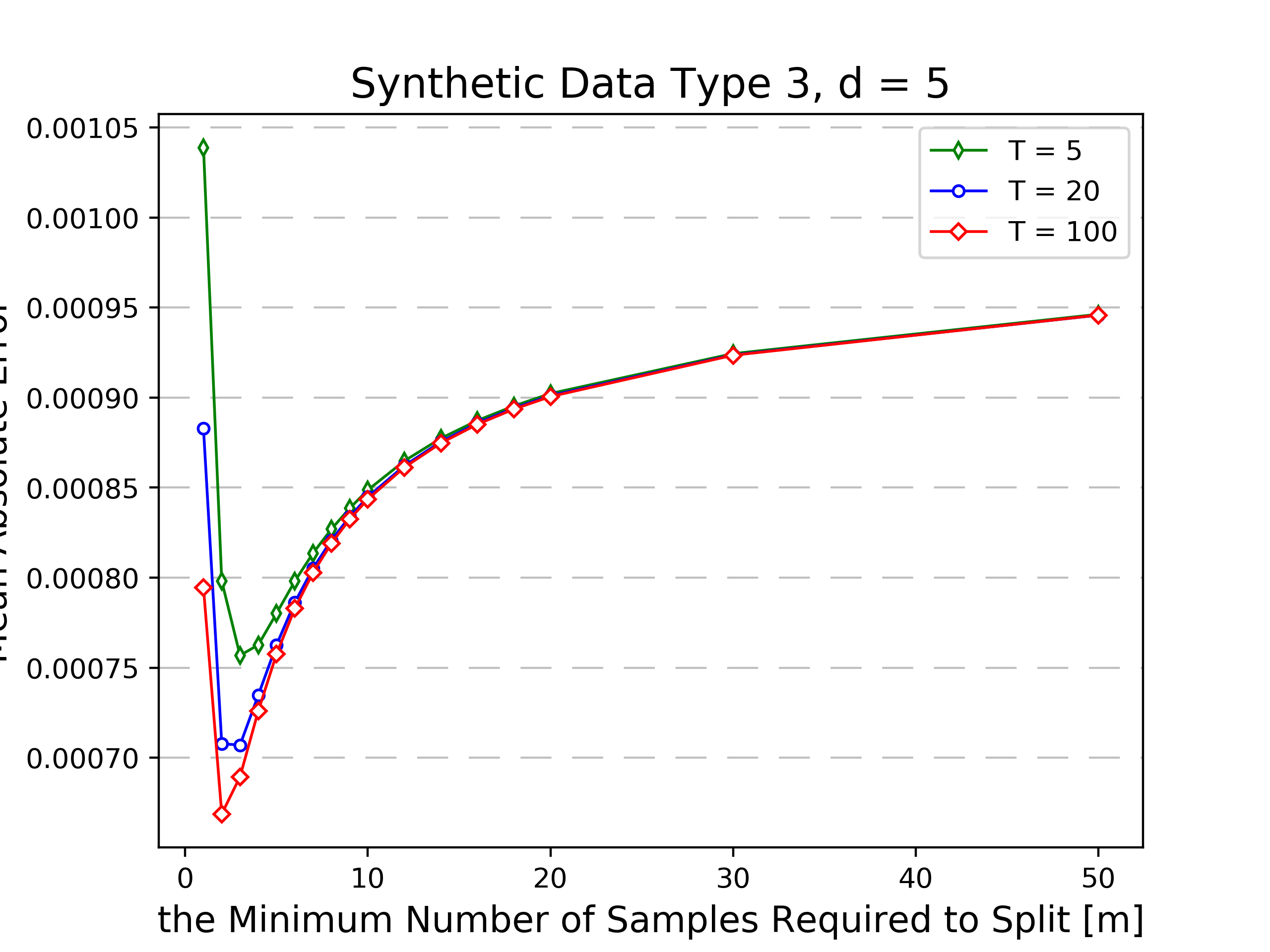}
\end{minipage}
}
\subfigure[The study of parameters with $d=5$, where each row respectively stands for each type of the four distributions. The left column indicates how \textit{ANLL} varies along parameters $m$ and $T$, and the right column shows the variation of \textit{MAE}.]{
{
	\begin{minipage}{0.450\textwidth}
		\centering
		\includegraphics[width=\textwidth]{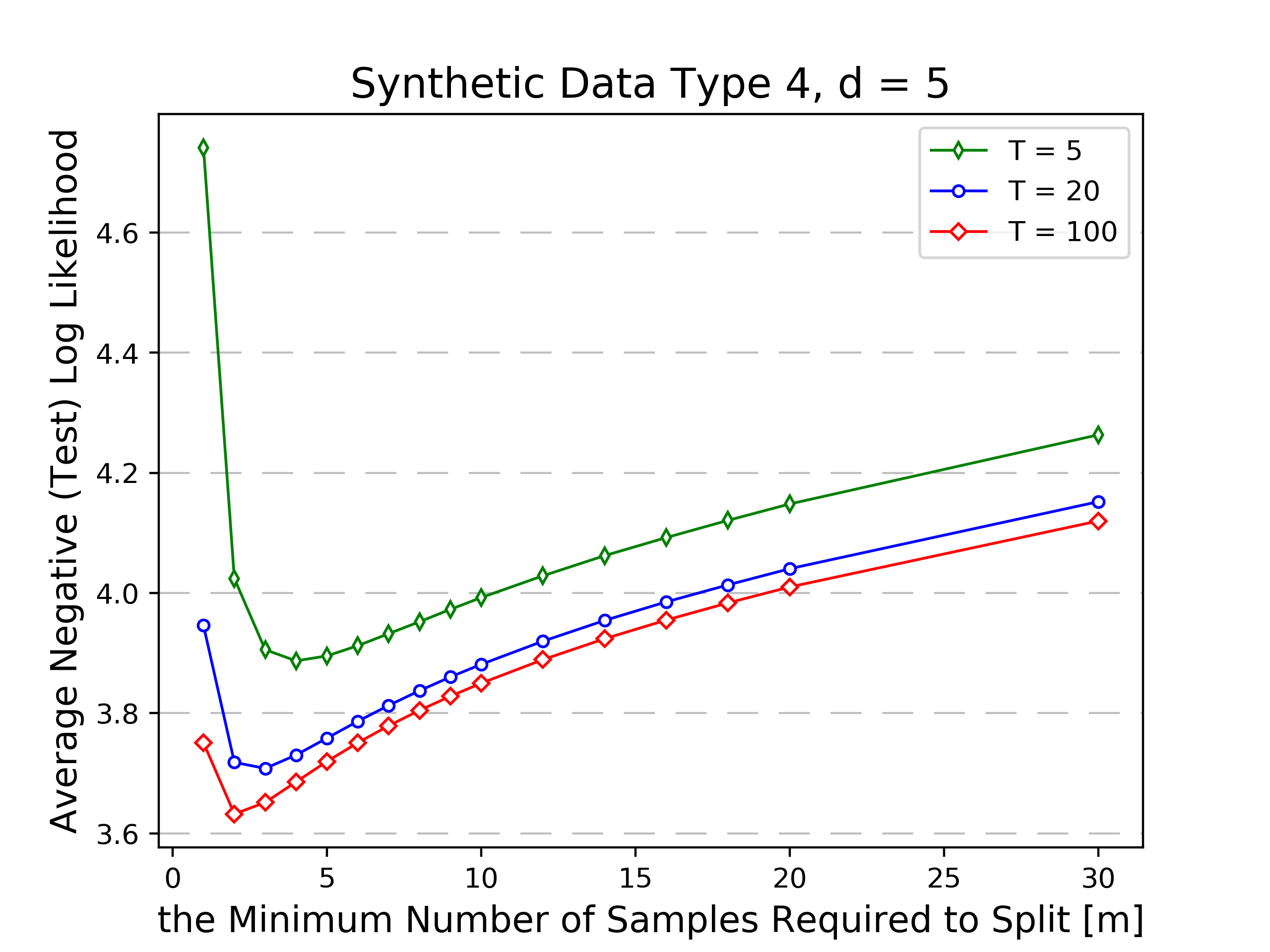}
	\end{minipage}
}
{
	\begin{minipage}{0.450\textwidth}
		\centering
		\includegraphics[width=\textwidth]{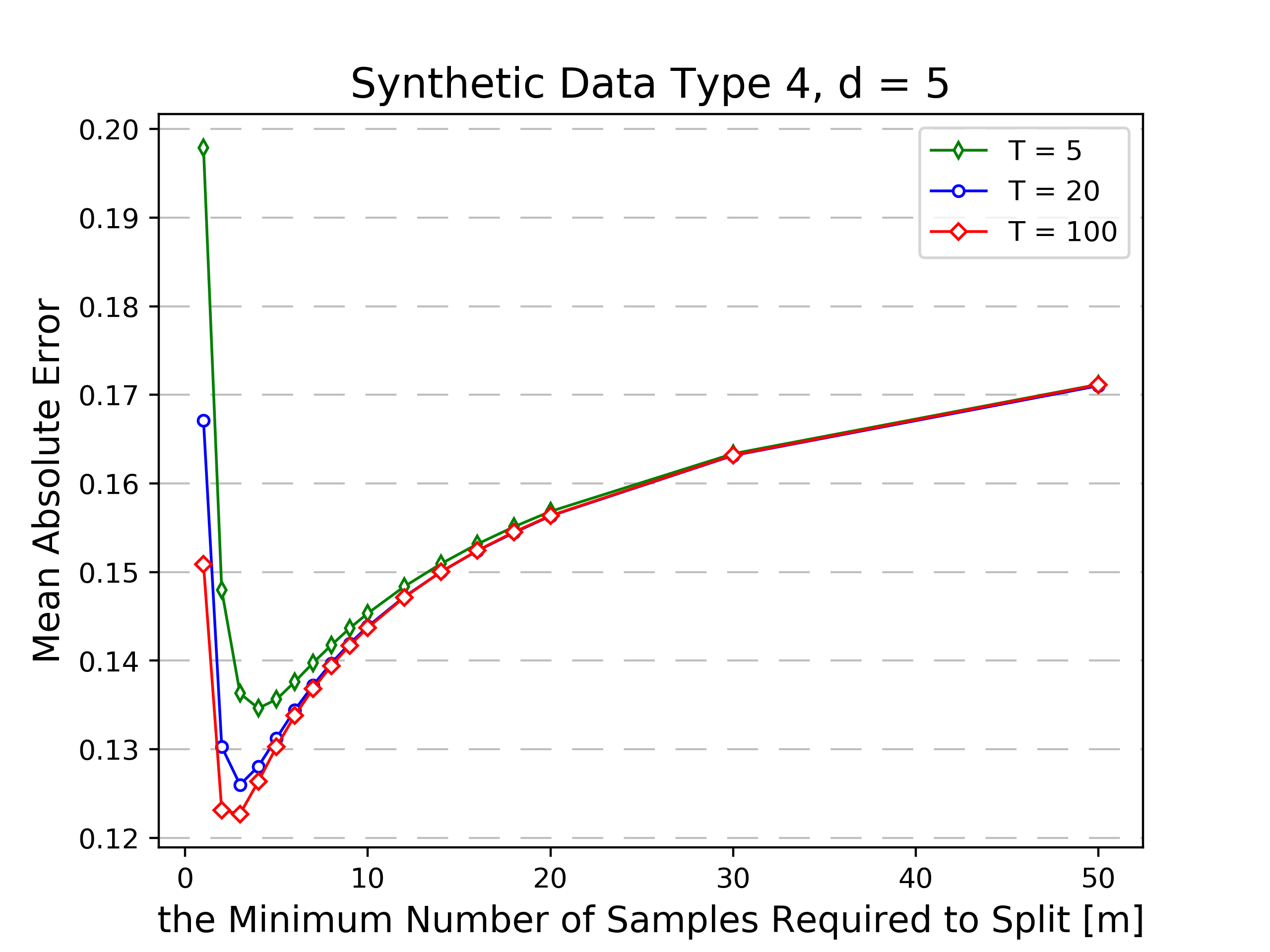}
	\end{minipage}
}
}
\caption{Study of parameters for AHTE.}
\label{fig::ParameterStudy}
\end{figure}

As we can see in Figure \ref{fig::ParameterStudy}, the performances of our AHTE estimator have the local optimal results on axis of $m$, that is, we can choose best $m$ parameters to minimize the $ANLL$ and average $MAE$ of each line. On the other hand, the results gain considerable improves when we go from $T=5$ to $T=100$, but then a steady state seems to be reached. Further more, average Negative log-likelihood loss function obtains better ability in tell the differences when $T$ changes. Features above provide enormous convenience to parameter selection in experiments, since it means that we can not only possibly figure out the optimal parameter $m$, but also choose a rather big $T$ as the number of partitions in an ensemble, for the further incrementation has no significant effect beyond certain limit.

\subsubsection{Performance Comparisons}\label{sec::SyntheticComparisons}

In the following experiments, comparisons are conducted among our adaptive histogram transform ensembles (AHTE), kernel density estimators (KDE) and Na\"{i}ve Histogram transform ensembles (NHTE).
\begin{itemize}
    \item 
KDE: the kernel density estimators \citep{PARZEN1962On, Davis1956Remarks} where we take the Gaussian kernel.
    \item 
NHTE: the original HTE density estimator proposed by \cite{Ezequiel2013HT}.
\end{itemize}

\begin{table}[H] 
\setlength{\tabcolsep}{9pt}
\centering
\captionsetup{justification=centering}
\caption{\footnotesize{Average \textit{ANLL} and \textit{MSE} over synthetic data sets}}
\label{tab::SyntheticData}
\resizebox{0.9\textwidth}{40mm}{
\begin{tabular}{cccccccc}
\toprule
\multirow{2}*{\text{Datasets}}
& \multirow{2}*{$d$} 
& \multicolumn{2}{c}{\text{NHTE}}
& \multicolumn{2}{c}{\text{KDE}}
& \multicolumn{2}{c}{\text{AHTE}} \\
\cline{3-8}
\multicolumn{2}{c}{}&\textit{ANLL}&\textit{MAE}
&\textit{ANLL}&\textit{MAE} &\textit{ANLL}&\textit{MAE} 
\\
\hline 
\hline
\multirow{4}*{I}
& \multirow{2}*{2} & $\textbf{-0.5942}$ & $\textbf{0.7029}$ & $-0.3112$ & $0.8002$ & $-0.5768$ & $0.8204$ \\
&   & $(0.0556)$ & $(0.2060)$ & $(0.0082)$ & $(0.0199)$ & $(0.0143)$ & $(0.0463)$ \\
\cline{2-8}
& \multirow{2}*{5} & $0.5870$ & $8.7246$ & $-0.1295$ & $6.7247$ & $\textbf{-0.5727}$ & $\textbf{6.3086}$ \\
&   & $(0.4282)$ & $(1.1090)$ & $(0.0135)$ & $(0.0426)$ & $(0.0217)$ & $(0.1717)$ \\
\hline
\multirow{4}*{II}
& \multirow{2}*{2} & $-0.1339$ & $0.4941$ & $-0.0108$ & $0.5479$ & $\textbf{-0.1526}$ & $\textbf{0.3791}$ \\
&   & $(0.0608)$ & $(0.1639)$ & $(0.0066)$ & $(0.0106)$ & $(0.0181)$ & $(0.0830)$ \\
\cline{2-8}
& \multirow{2}*{5} & $1.4265$ & $3.4790$ &$0.4796$ & $2.5694$ & $\textbf{0.2582}$ & $\textbf{2.4126}$ \\
&   & $(0.6569)$ & $(0.6844)$ & $(0.0102)$ & $(0.0299)$ & $(0.0251)$ & $(0.1723)$ \\
\hline
\multirow{4}*{III}
& \multirow{2}*{2} & $3.2186$ & $0.0200$ & $3.2726$ & $0.0233$ & $\textbf{3.1647}$ & $\textbf{0.0161}$ \\
&   & $(0.0617)$ & $(0.0060)$ & $(0.0096)$ & $(0.0005)$ & $(0.0161)$ & $(0.0035)$ \\
\cline{2-8}
& \multirow{2}*{5} & $9.2211$ & $0.0011$ & $8.5674$ & $0.0008$ & $\textbf{8.3585}$ & $\textbf{0.0008}$ \\
&   & $(0.3222)$ & $(0.0002)$ & $(0.0130)$ & $(0.0000)$ & $(0.0168)$ & $(0.0001)$ \\
\hline
\multirow{4}*{IV}
& \multirow{2}*{2} & $2.1263$ & $0.0641$ & $2.1309$ & $0.0531$ & $\textbf{2.0343}$ & $\textbf{0.0450}$ \\
&   & $(0.0558)$ & $(0.0180)$ & $(0.0103)$ & $(0.0024)$ & $(0.0144)$ & $(0.0045)$ \\
\cline{2-8}
& \multirow{2}*{5} & $4.8812$ & $0.1820$ & $3.7944$ & $0.1489$ & $\textbf{3.6513}$ & $\textbf{0.1227}$ \\
&   & $(0.4528)$ & $(0.0303)$ & $(0.0208)$ & $(0.0032)$ & $(0.0226)$ & $(0.0033)$ \\
\bottomrule	
\end{tabular}}
\begin{minipage}{0.9\textwidth}
\begin{tablenotes}
\footnotesize
\item {*} The best results are marked in \textbf{bold}, and the standard deviation is reported in the parenthesis under each value.
\end{tablenotes}
\end{minipage}
\end{table}

We perform the experiment with $n = 2,000$ training points and $m = 10,000$ test samples. For our AHTE, we set $T=100$. Besides, the hyper-parameter \textit{min\_samples\_split}, the minimal number of samples required to split, is chosen by the grid search method, where the validation set consists of $30\%$ samples randomly selected from the training data, and the optimal parameters are with the minimal validated \textit{ANLL}. The grid is selected as $\{1, 3, 10, 20, 40\}$. For NHTE, the selection of the parameters $s_{\min}$ and $s_{\max}$, is referred to the Nelder-Mead method \citep{nelder1965simplex}. And for KDE, we use the function \emph{gaussian\_kde} in the package \emph{scipy} of python and adopt the default settings for bandwidth selection. For every method we have computed the average \emph{ANLL} and \emph{MAE} over the $100$ runs.

As is shown in Table \ref{tab::SyntheticData}, our adaptive AHTE method outperforms the other two state-of-the-art algorithms NHTE and KDE in terms of \textit{ANLL} and \textit{MAE} measurements, only except for dataset I which was reducted to 2-dimension by PCA. The advantages brought by our AHTE model derives from the high efficiency of information utilization, making it possible for us to establish adaptive domains obtaining sample points with similarities.

Experimental results of synthetic data have so far shown part of its strength of our AHTE model. With data on high dimension space, our model can bring more accuracy to density estimation, proved by \textit{ANLL} and \textit{MAE} loss measurement respectively. Further more, as the synthetic data sets are more \textit{regularized} data, the next step we take is to vertify our model in complex real data situation.

\subsection{Real Data Analysis} \label{sec::subsec::realdata}

\subsubsection{Real Data Settings}

We conduct the real data comparisons based on the following UCI datasets \citep{bache2013uci}, previously used to study the performance of other density estimators\citep{silva2011mixed, tang2012deep, uria2013rnade}. Following \cite{tang2012deep}, we eliminated discrete-valued attributes and an attribute from every pair with a Pearson correlation coefficient greater than $0.98$. Each dimension of the data was normalized to $[0,1]$, and all results are reported on the normalized data.

\begin{itemize}
    \item
{\tt WineQuality}: 
The \textit{Wine Quality Data Set} is available on UCI. Two sub-datasets are included, related to red and white vinho verde wine samples, with continuous attributes representing acidity, pH, alcohol, etc. The {\tt Redwine} data set consist of $1,599$ observations of dimension $11$, and the {\tt Whitewine} data set consist of $4,898$ of dimension $11$. In experiment, each of the two data sets are reduced to $1$, $3$, $6$ and $8$ dimensions respectively through PCA.
    \item
{\tt Parkinsons}: 
The \textit{Parkinsons Telemonitoring Data Set} available on UCI is composed of a range of biomedical voice measurements from $42$ people with early-stage Parkinson's disease. This data set contains $5,875$ observations with $26$ attributes concerning basic information of patients, vocal frequencies and vocal amplitudes. In experiment, instances in this data set are respectively reduced to $2$, $4$, $8$ and $10$ dimensions through PCA.
    \item 
{\tt Ionosphere}: 
This data set is available on the UCI Repository with $351$ observations and $34$ continuous attributes. The radar data was collected by a system in Goose Bay, Labrador, which consists of a phased array of 16 high-frequency antennas. In experiment, instances in this data set are respectively reduced to $3$, $10$, $16$ and $22$ dimensions through PCA.
\end{itemize}

Note that in the data pre-processing, all data sets are reduced to various lower dimensions through PCA. The reasons are intuitive. Since real density often resides in a low-dimension manifold instead of filling the whole high-dimensional space, it becomes more reasonable to solve the density estimation problems after dimensionality reduction. However, the dimension of the specific manifold remains unknown, so we take it as a hyper-parameter and try several possible options. 

In the following experiments, we adopt the same parameter settings as in Section \ref{sec::SyntheticComparisons}. For our AHTE, we set $T=100$. Besides, the hyper-parameter \textit{min\_samples\_split}, the minimal number of samples required to split, is chosen by the grid search method, where the validation set consists of $30\%$ samples randomly selected from the training data, and the optimal parameters are with the minimal validated \textit{ANLL}. The grid is selected as $\{1, 3, 10, 20, 40\}$. For NHTE, the selection of the parameters $s_{\min}$ and $s_{\max}$, is referred to the Nelder-Mead method \citep{nelder1965simplex}. And for KDE, we use the function \emph{gaussian\_kde} in the package \emph{scipy} of python and adopt the default settings for bandwidth selection.

\subsubsection{Real Data Comparisons}

\begin{table}[htbp] 
\setlength{\tabcolsep}{9pt}
\centering
\captionsetup{justification=centering}
\caption{\footnotesize{Average \textit{ANLL} over real data sets}}
\label{tab::Realdata}
\resizebox{0.9\textwidth}{36mm}{
\begin{tabular}{c|c|c|c|c}
\toprule
Datasets & $d$ &\text{NHTE} & \text{KDE} & \text{Our AHTE} \\ \hline \hline
\multirow{4}*{{\tt Redwine}}
& 1 & $\mathbf{-0.9168}$ $(0.1090)$ &  $-0.8933$  $(0.1016)$ & $-0.9029$  $(0.1132)$ \\
& 3 & $-2.3152$ $(0.3605)$ & $-2.5608$  $(0.1517)$ & $\mathbf{-2.7211}$ $(0.1964)$ \\
& 6 & $-3.0261$  $(1.3117)$ & $-5.6829$  $(0.2179)$ & $\mathbf{-6.1486}$ $(0.3027)$ \\
& 8 & $-2.6352$  $(1.2961)$ & $-7.5382$  $(0.2517)$ & $\mathbf{-7.9488}$ $(0.3488)$ \\ \hline
\multirow{4}*{{\tt Whitewine}}
& 1 & $-0.9561$  $(0.1151)$ & $\mathbf{-0.9661}$  $(0.1136)$ & $-0.9402$ $(0.1161)$ \\
& 3 & $-4.0290$ $(0.3888)$ & $-4.1101$  $(0.3724)$ & $\mathbf{-4.3077}$ $(0.3847)$ \\
& 6 & $-4.5044$ $(0.9711)$ & $-6.5178$  $(0.4369)$ & $\mathbf{-6.6920}$ $(0.4314)$ \\
& 8 & $-4.8352$ $(1.0595)$ & $-8.7202$ $(0.4346)$ & $\mathbf{-8.9497}$ $(0.4217)$ \\ \hline
\multirow{4}*{{\tt Parkinsons}}
& 2 & $-0.7765$ $(0.5245)$ & $-0.2940$ $(0.0200)$ & $\mathbf{-3.5291}$ $(0.1230)$ \\
& 4 & $-4.8725$ $(1.3322)$ & $-1.9956$ $(0.0355)$ & $\mathbf{-4.9030}$ $(0.1285)$ \\
& 8 & $-4.8511$ $(0.9005)$ & $-6.0971$ $(0.1133)$ & $\mathbf{-7.1145}$ $(0.1580)$ \\
& 10 & $-5.9335$ $(1.4770)$ & $-10.2285$ $(0.1634)$ & $\mathbf{-11.0099}$ $(0.2096)$ \\ \hline
\multirow{4}*{{\tt Ionosphere}}
& 3 & $-1.1516$ $(0.7316)$ & $-1.5021$ $(0.2243)$ & $\mathbf{-1.7826}$ $(0.5222)$ \\
& 10 & $-6.3828$  $(1.0471)$ & $-7.8793$ $(1.1025)$ & $\mathbf{-11.2895}$ $(1.7528)$ \\
& 16 & $-10.2635$  $(1.9343)$ & $-13.2235$ $(1.8512)$ & $\mathbf{-18.9552}$ $(2.8121)$ \\
& 22 & $-14.7161$ $(2.3818)$ & $-18.7878$ $(2.3641)$ & $\mathbf{-25.3548}$ $(3.6542)$ \\
\bottomrule	
\end{tabular}}
\begin{minipage}{0.9\textwidth}
\begin{tablenotes}
\footnotesize
\item{*} The best results are marked in \textbf{bold}, and the standard deviation is reported in the parenthesis.
\end{tablenotes}
\end{minipage}
\end{table}

In Table \ref{tab::Realdata}, we summarize the comparisons with other two top-notch density estimators that demonstrates the accuracy of our histogram transform algorithm on four real datasets. For almost all of the redacted datasets, our AHTE model shows its superiority on accuracy. Meanwhile, with the incrementation of data dimension, our AHTE algorithm reveals itself with better performance on accuracy, obtaining a non-negligible edge over the popular KDE and NHTE algorithms.

\section{Proofs} \label{sec::proofs}

\subsection{Proofs of Results in the Space $C^{0,\alpha}$}

\subsubsection{Proofs Related to Section \ref{subsubsec::AppError0}}

\begin{proof}[of Proposition \ref{ApproximationError}] 
\emph{(i)} Since the space of continuous and compactly supported functions $C_c(\mathbb{R}^d)$ is dense in $L_1(\mathbb{R}^d)$, we can find $\tilde{f} \in C_c(\mathbb{R}^d)$ such that
for all $\varepsilon > 0$, there holds
\begin{align} \label{CompactApprox}
\|f - \tilde{f}\|_{L_1(\mu)} \leq \varepsilon/3,
\end{align}
and $B := \supp(\tilde{f}) \subset B_r$ is a $d$-dimensional rectangle. Moreover, $\tilde{f}$ is uniformly continuous, since it is continuous and $\mathrm{supp}(\tilde{f})$ is compact. This implies that there exists a $\delta \in (0, 1]$ such that if $\|x - x'\|_1 \leq \delta$, then we have
\begin{align} \label{ftildeUniformlyContinuity}
|\tilde{f}(x) - \tilde{f}(x')|
\leq \frac{\varepsilon}{3 \mu(B_r)}.
\end{align}
Now we let $h_{\varepsilon} := \delta/d$ and define $\bar{f} : \mathbb{R}^d \to \mathbb{R}$ by
\begin{align} \label{fbar}
\bar{f}(x) := 
\frac{1}{\mu(A_H(x))} \int_{A_H(x)} \tilde{f}(x') \, d\mu(x'), 
\end{align}
where $A_H(x)$ is as in \eqref{equ::InputBin}. Then, for all $\varepsilon > 0$, \eqref{CompactApprox} implies that
\begin{align}
\|f - f_{\mathrm{P},H}\|_{L_1(\mu)}
& \leq \|f - \tilde{f}\|_{L_1(\mu)} + \|\tilde{f} - \bar{f}\|_{L_1(\mu)} + \|\bar{f} - f_{\mathrm{P},H}\|_{L_1(\mu)}
\nonumber\\
& \leq \varepsilon/3 + \|\tilde{f} - \bar{f}\|_{L_1(\mu)} + \|\bar{f} - f_{\mathrm{P},H}\|_{L_1(\mu)}.
         \label{L1ErrorDecomposition}
\end{align}
If $x \in B_r^c$, then we have $x \in B^c$ and thus $\tilde{f}(x) = 0$. Moreover, there holds
\begin{align*}
\bar{f}(x) := \frac{1}{\mu(A_H(x))} \int_{A_H(x)} \tilde{f}(x') \, d\mu(x') = 0.
\end{align*}
Therefore, we obtain
\begin{align*}
\|\tilde{f} - \bar{f}\|_{L_1(\mu)}
= \int_{B_r} |\tilde{f}(x) - \bar{f}(x)| \, d\mu(x).
\end{align*}
For $x \in B_r$ with $\mu(A_H(x)) > 0$, there holds
\begin{align*}
|\tilde{f}(x) - \bar{f}(x)|
& = \biggl| \frac{1}{\mu(A_H(x))} \int_{A_H(x)} \tilde{f}(x) \, d\mu(x') - \frac{1}{\mu(A_H(x))} \int_{A_H(x)} \tilde{f}(x') \, d\mu(x') \biggr|
\\
& \leq \frac{1}{\mu(A_H(x))} \int_{A_H(x)} |\tilde{f}(x) - \tilde{f}(x')| \, d\mu(x').
\end{align*}
For all $x' \in A_H(x)$, we have
\begin{align*}
\|x - x'\|_1 \leq d \cdot \overline{h}_0 \leq \delta,
\end{align*}
Consequently,
\begin{align*}
|\tilde{f}(x) - \tilde{f}(x')|
\leq \frac{\varepsilon}{3 \mu(B_r)}.
\end{align*}
As a result, we have
\begin{align} \label{ftildefbarL1}
\|\tilde{f} - \bar{f}\|_{L_1(\mu)}
= \int_{B_r} |\tilde{f}(x) - \bar{f}(x)| \, d\mu(x)
\leq \frac{\varepsilon}{3}.
\end{align}
Finally, \eqref{fbar} yields that
\begin{align*}
\|\bar{f} - f_{\mathrm{P},H}\|_{L_1(\mu)}
& = \sum_{j \in \mathcal{I}_H} \int_{\mathbb{R}^d} |\bar{f}(x) - f_{\mathrm{P},H}(x)|\eins_{A_j}(x) \, d\mu(x)
\\
& = \sum_{j \in \mathcal{I}_H} \int_{A_j} |(\bar{f}(x) - f_{\mathrm{P},H}(x)) \eins_{A_j}(x)| \, d\mu(x)
\\
& = \sum_{j \in \mathcal{I}_H} \int_{A_j} \biggl| \frac{1}{\mu(A_j)} \int_{A_j} \tilde{f}(x') \, d\mu(x')
- \frac{1}{\mu(A_j)} \int_{A_j} f(x') \, d\mu(x') \biggr| \, d\mu(x)
\\
& \leq \sum_{j \in \mathcal{I}_H}  \int_{A_j}\biggl| \tilde{f}(x') - f(x') \, \biggr| d\mu(x')
\\
& \leq \int_{\mathbb{R}^d} |\tilde{f}(x') - f(x')| \, d\mu(x')
\\
& \leq \frac{\varepsilon}{3},
\end{align*}
which proves the assertion by combining the estimates \eqref{L1ErrorDecomposition} and \eqref{ftildefbarL1}.

\emph{(ii)} The assumption $f \in C^{0,\alpha}$ tells us that there exists a constant $c_L$ as in Definition \ref{def::Cp} such that for all $x', x'' \in A_H(x)$, we have
\begin{align*}
|f(x') - f(x'')|
\leq c_L \|x' - x''\|_1^{\alpha}
\leq c_L (\mathrm{diam}(A_H(x)))^{\alpha}
\leq c_L (d \overline{h}_0)^{\alpha}.
\end{align*}
Therefore, 
\begin{align*}
|f_{\mathrm{P},H}(x) - f(x)|
& = \biggl| \frac{1}{\mu(A_H(x))} \int_{A_H(x)} f(x') \, d\mu(x') - f(x) \biggr|
\\
& = \biggl| \frac{1}{\mu(A_H(x))} \int_{A_H(x)} f(x') - f(x) \, d\mu(x') \biggr|
\\
& \leq \frac{1}{\mu(A_H(x))} \int_{A_H(x)} |f(x') - f(x)| \, d\mu(x') 
\\
& \leq c_L (d \cdot \overline{h}_0)^{\alpha}
\\
& \leq \varepsilon.
\end{align*}
Consequently we obtain $\|f_{\mathrm{P},H}(x) - f(x)\|_{L_{\infty}(\mu)} \leq \varepsilon$.
\end{proof}

\begin{proof}[of Proposition \ref{L1LInftyRelation}] 
We decompose $\|f_{\mathrm{P},H} - f\|_{L_1(\mu)}$ as follows
\begin{align*}
\|f_{\mathrm{P},H} - f\|_{L_1(\mu)}
& = \int_{B_r} |f_{\mathrm{P},H} - f| \, d\mu
      + \int_{B_r^c} |f_{\mathrm{P},H} - f| \, d\mu
\\
& \leq \mu(B_r) \|(f_{\mathrm{P},H} - f) \eins_{B_r}\|_{L_{\infty}(\mu)}
          + \int_{B_r^c} f_{\mathrm{P},H} \, d\mu
          + \int_{B_r^c} f \, d\mu
\\
& = 2^d r^d\|(f_{\mathrm{P},H} - f) \eins_{B_r}\|_{L_{\infty}(\mu)}
      + \int_{B_r^c} f_{\mathrm{P},H} \, d\mu
      + \mathrm{P}(B_r^c).
\end{align*}
Then \eqref{RHdensity} implies 
\begin{align*}
\int_{B_r^c} f_{\mathrm{P},H}(x) \, dx
& = \int_{B_r^c}  \sum_{j \in \mathcal{I}_H} \frac{\mathrm{P}(A_j) \eins_{A_j}(x)}{\mu(A_j)} 
                          + \frac{\mathrm{P}(B_r^c) \eins_{B_r^c}}{\mu(B_r^c)} \, d\mu(x)
\\
& = \frac{\mathrm{P}(B_r^c)}{\mu(B_r^c)} \int_{B_r^c} \eins_{B_r^c}(x) \, d\mu(x)
\\
& = \frac{\mathrm{P}(B^c_r)}{\mu(B_r^c)} \cdot \mu(B_r^c) 
\\
& = \mathrm{P}(B^c_r).
\end{align*}
Combining the above two estimates, we obtain the desired conclusion. 
\end{proof}

\subsubsection{Proofs Related to Section \ref{subsubsction::VCDimension}}

\begin{proof}[of Lemma \ref{FundamentalLemma}] 
\emph{(i)} Since $B_r = \bigcup_{j\in \mathcal{I}_H} A_j$, we have
\begin{align*}
\|f_{\mathrm{D},H} - f_{\mathrm{P},H}\|_{L_1(\mu)}
& = \int_{\mathbb{R}^d} |f_{\mathrm{D},H} - f_{\mathrm{P},H}| \, d\mu
\\
& = \int_{\mathbb{R}^d} \biggl| \sum_{j\in \mathcal{I}_H} \frac{1}{\mu(A_j)} (\mathrm{D}(A_j) - \mathrm{P}(A_j)) \eins_{A_j(x)}
       + \frac{1}{B_r^c}(\mathrm{D}(B_r^c) - \mathrm{P}(B_r^c))\eins_{B_r^c}\biggr| \, d\mu
\\
& = \sum_{j \in \mathcal{I}_H} \int_{A_j} \frac{1}{\mu(A_j)} |\mathrm{D}(A_j) - \mathrm{P}(A_j)|\, d\mu
        + \frac{1}{\mu(B_r^c)}\int_{B_r^c}|\mathrm{D}(B_r^c) - \mathrm{P}(B_r^c)|\, d\mu
\\
& = \sum_{j \in \mathcal{I}_H} |\mathrm{D}(A_j) - \mathrm{P}(A_j)| + |\mathrm{D}(B_r^c) - \mathrm{P}(B_r^c)|
\\
& = \sum_{j \in \mathcal{I}_H} |\mathbb{E}_{\mathrm{D}} \eins_{A_j} - \mathbb{E}_{\mathrm{P}} \eins_{A_j}| 
                                             + |\mathbb{E}_{\mathrm{D}}\eins_{B_r^c} - \mathbb{E}_{\mathrm{P}}\eins_{B_r^c}|
\\
& = \sum_{j \in \mathcal{I}_H \cup \{0\}} |\mathbb{E}_{\mathrm{D}} \eins_{A_j} - \mathbb{E}_{\mathrm{P}} \eins_{A_j}|.
\end{align*}	

\emph{(ii)} Using \eqref{equ::RHdensity} and \eqref{equ::EMRHdensity}, we get
\begin{align*}
\|f_{\mathrm{D},H} - f_{\mathrm{P},H}\|_{L_{\infty}(\mu)}
& = \sup_{j\in \mathcal{I}_H\cup \{0\}}\sup_{x \in A_j}  |f_{\mathrm{D},H}(x) - f_{\mathrm{P},H}(x)|
\\
& = \sup_{j\in \mathcal{I}_H\cup \{0\}}\sup_{x \in A_j}  \biggl| \frac{\mathrm{D}(A_j)}{\mu(A_j)} - \frac{\mathrm{P}(A_j)}{\mu(A_j)}\biggr|\\
&= \sup_{j\in \mathcal{I}_H\cup \{0\}} \frac{|\mathrm{D}(A_j) - \mathrm{P}(A_j)|}{\mu(A_j)}.
\end{align*}
This proves the assertion.
\end{proof}

To prove Lemma \ref{VCindex}, we need the following fundamental lemma concerning with the VC dimension of purely random partitions which follows the idea put forward by \cite{bremain2000some} of the construction of purely random forest. To this end, let $p \in \mathbb{N}$ be fixed and $\pi_p$ be a partition of $\mathcal{X}$ with number of splits $p$ and $\pi_{(p)}$ denote the collection of all partitions $\pi_p$.

\begin{lemma} \label{VCindexPre}
The VC dimension of $\mathcal{B}_p$ defined by
\begin{align} \label{Bp}
\mathcal{B}_p := \biggl\{ B : B = \bigcup_{j \in J} A_j, J \subset \{ 0, 1, \ldots, p \}, A_j \in \pi_p \subset \pi_{(p)} \biggr\}.
\end{align}
can be upper bounded by $d p + 2$.
\end{lemma}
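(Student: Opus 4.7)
My plan is to prove $\mathrm{VC}(\mathcal{B}_p) \leq dp + 2$ by induction on the number of splits $p$. For the base case $p=0$, the only partition is the trivial one $\pi_0 = \{\mathcal{X}\}$, so $\mathcal{B}_0 = \{\emptyset, \mathcal{X}\}$ has only two elements; consequently $\mathrm{VC}(\mathcal{B}_0) \leq 1 \leq d \cdot 0 + 2$.

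For the inductive step, assuming the bound $\mathrm{VC}(\mathcal{B}_{p-1}) \leq d(p-1)+2$, I aim to deduce $\mathrm{VC}(\mathcal{B}_p) \leq dp+2$. The crucial structural observation is that every $\pi_p \in \pi_{(p)}$ is produced from some $\pi_{p-1} \in \pi_{(p-1)}$ by refining one of its cells $A$ via a single axis-aligned hyperplane into two subcells $A', A''$. Consequently any $B \in \mathcal{B}_p$ admits a disjoint decomposition $B = B_0 \sqcup B_1$, where $B_0$ is a union of cells of $\pi_{p-1}\setminus\{A\}$ and hence belongs to $\mathcal{B}_{p-1}$ (supported on $\mathcal{X}\setminus A$), while $B_1 \in \{\emptyset, A', A'', A\}$ is supported on $A$.

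I would then bound the shatter coefficient $S_{\mathcal{B}_p}(N)$ by combining the shatter coefficient of $\mathcal{B}_{p-1}$ (handling $B_0$) with the number of distinct labelings on $A \cap S$ produced by the four-element family $\{\emptyset, A', A'', A\}$ as both the direction and the threshold of the new split vary. On $n$ points in $A$, this latter count is at most $2d(n-1)+4$, since there are $d$ choices of coordinate direction and at most $n-1$ essentially distinct thresholds per direction, each yielding two non-trivial subsets $A', A''$, plus the trivial options. Multiplying the two contributions and applying Sauer--Shelah (or a direct polynomial-growth argument) yields a recursion of the form $\mathrm{VC}(\mathcal{B}_p) \leq \mathrm{VC}(\mathcal{B}_{p-1}) + d$, which together with the inductive hypothesis gives $dp + 2$.

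The main obstacle is that the decomposition $B = B_0 \sqcup B_1$ is not canonically determined by $B$ alone, since several pairs $(\pi_{p-1}, A)$ may produce the same $\pi_p$, and the induction has to be applied uniformly across the $2^N$ shattering labelings. The standard way around this is to impose a consistent ``last-split'' convention (for instance, using the temporal order of the sequential construction) so that each $\pi_p$ has a canonical parent $\pi_{p-1}$; then the product bound on the shatter coefficient holds globally and the desired VC-dimension estimate follows directly from comparing the polynomial shatter bound with $2^N$.
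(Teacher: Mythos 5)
Your strategy diverges from the paper's (the paper argues geometrically, exhibiting configurations of points separated by parallel hyperplanes that no $p$ splits can dichotomize), which is fine in principle, but as written your argument has a genuine gap at its crucial step. You bound the number of labelings contributed by the new split, multiply this with the shatter coefficient of $\mathcal{B}_{p-1}$, and then claim that ``applying Sauer--Shelah'' yields the recursion $\mathrm{VC}(\mathcal{B}_p) \leq \mathrm{VC}(\mathcal{B}_{p-1}) + d$. That inference does not go through: from a product bound on growth functions you can only conclude that the growth function of $\mathcal{B}_p$ is polynomial of degree roughly $d(p-1)+2+O(d)$, and comparing a polynomial of degree $k$ with $2^N$ gives a VC-dimension bound of order $k\log k$, not $k$. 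Composition/union arguments of this type are well known to lose logarithmic factors, so your route can at best deliver $\mathrm{VC}(\mathcal{B}_p) = O\bigl(dp\log(dp)\bigr)$, which is weaker than the stated bound $dp+2$. You would need an entirely different mechanism (e.g.\ a direct combinatorial or geometric argument about which label patterns are realizable, as the paper does) to obtain the clean additive bound.

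A second, more structural problem is that your count of ``$d$ choices of coordinate direction and at most $n-1$ thresholds per direction'' presumes axis-aligned splits. The partitions $\pi_p$ in this lemma model the cells produced by histogram transforms, which involve random rotations; the paper explicitly remarks at the end of its proof that the splitting hyperplanes ``can be generated both vertically and obliquely.'' For oblique hyperplanes the number of distinct dichotomies of $n$ points in $\mathbb{R}^d$ is of order $n^d$, not $O(dn)$, so even the shatter-coefficient bookkeeping in your inductive step is not valid for the class the lemma actually concerns (and repairing it with the $O(n^d)$ count still leaves you with the logarithmic loss described above). Your handling of the non-canonical decomposition via a ``last-split'' convention is a reasonable device, but it does not address either of these two issues.
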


\begin{proof}[of Lemma \ref{VCindexPre}] 
The proof will be conducted by dint of geometric constructions, and we proceed by induction. 

\begin{figure*}[htbp]
\centering
\begin{minipage}[b]{0.18\textwidth}
\centering
\includegraphics[width=\textwidth]{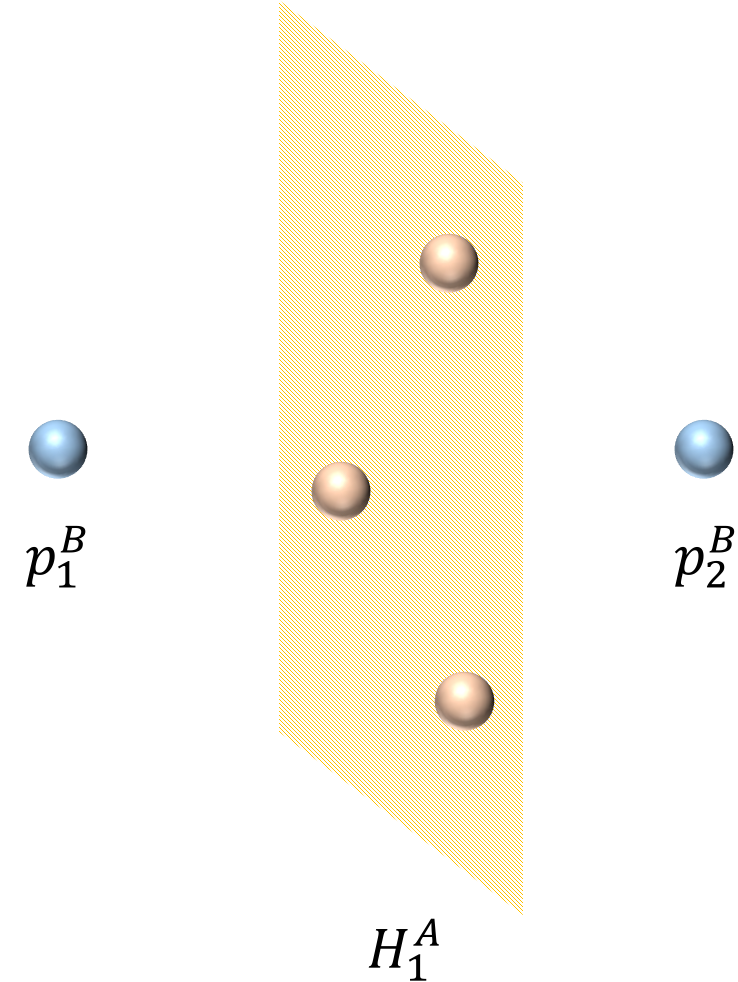}
$p=1$
\centering
\label{fig::p=1}
\end{minipage}
\qquad
\begin{minipage}[b]{0.25\textwidth}
\centering
\includegraphics[width=\textwidth]{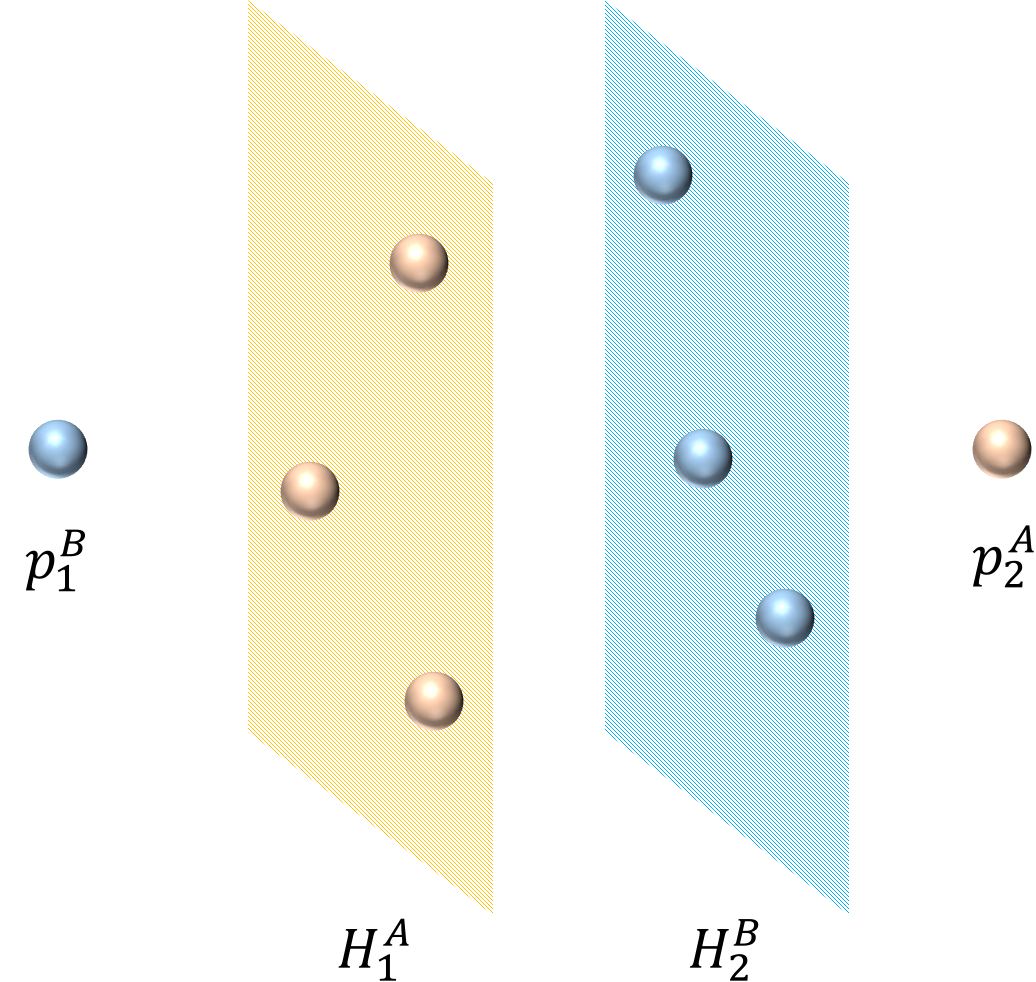}
$p=2$
\label{fig::p=2}
\end{minipage}
\qquad
\begin{minipage}[b]{0.43\textwidth}
\centering
\includegraphics[width=\textwidth]{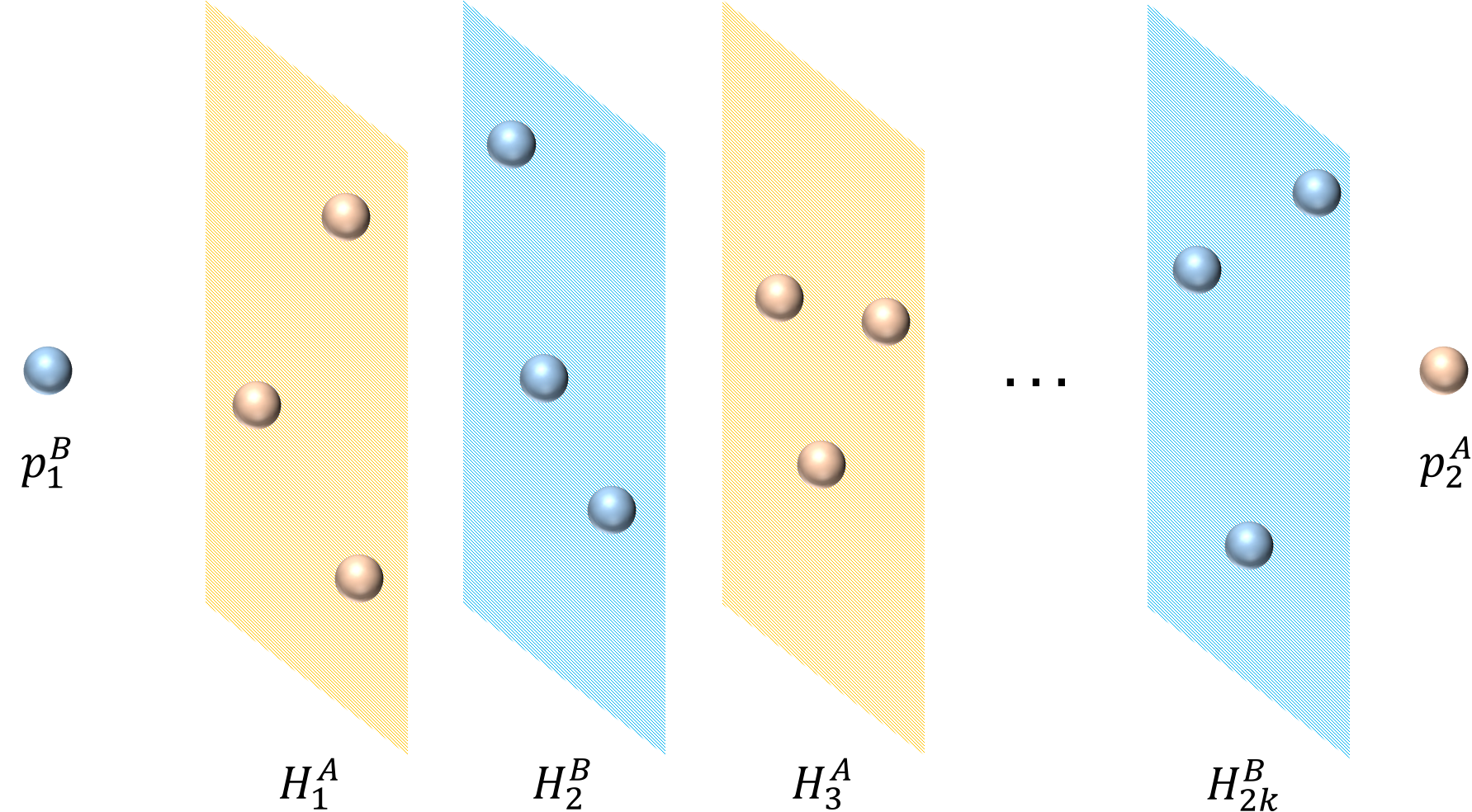}
$p=2k$
\label{fig::p=2k}
\end{minipage}
\caption{We take one case with $d=3$ as an example to illustrate the geometric interpretation of the VC dimension. The yellow balls represent samples from class $A$, blue ones are from class $B$ and slices denote the hyper-planes formed by samples. }
\label{fig::VC}
\end{figure*}

We begin by observing a partition with number of splits $p = 1$. On account that the dimension of the feature space is $d$, the smallest number of points that cannot be divided by $p = 1$ split is $d + 2$. Specifically, considering the fact that $d$ points can be used to form $d - 1$ independent vectors and therefore a hyper-plane of a $d$-dimensional space, we now focus on the case where there is a hyper-plane consisting of $d$ points all from the same class labeled as $A$, and there are two points from the other class $B$ on either side of the hyper-plane. We denote the hyper-plane by $H_1^A$ for brevity. In this case, points from two classes cannot be separated by one split, i.e. one hyper-plane, which means that $\mathrm{VC}(\mathcal{B}(\pi_1)) \leq d + 2$.

We next turn to consider the partition with number of splits $p = 2$ which is an extension of the above case. Once we pick one point out of the two located on either side of the above hyper-plane $H_1^A$, a new hyper-plane $H_2^B$ parallel to $H_1^A$ can be constructed by combining the selected point with $d - 1$ newly-added points from class $B$. Subsequently, a new point from class $A$ is added to the side of the newly constructed hyper-plane $H_2^B$. Notice that the newly added point should be located on the opposite side to $H_1^A$. Under this situation, $p = 2$ splits can never separate those $2 d + 2$ points from two different classes. As a result, we prove that $\mathrm{VC}(\mathcal{B}(\pi_2)) \leq 2 d + 2$.

If we apply induction to the above cases, the analysis of VC index can be extended to the general case where $p \in \mathbb{N}$. What we need to do is to add new points continuously to form $p$ mutually parallel hyperplanes with any two adjacent hyper-planes being built from different classes. Without loss of generality, we assume that $p = 2k+1$, $k \in \mathbb{N}$, and there are two points denoted by $p_1^B, p_2^B$ from class $B$ separated by $2 k + 1$ alternately appearing hyper-planes. Their locations can be represented by $p_1^B, H_1^A, H_2^B, H_3^A, H_4^B, \ldots, H_{(2k+1)}^A, p_2^B$. According to this construction, we demonstrate that the smallest number of points that cannot be divided by $p$ splits is $d p + 2$, which leads to $\mathrm{VC}(\mathcal{B}(\pi_p)) \leq d p + 2$.

It should be noted that our hyper-planes can be generated both vertically and obliquely, which is in line with our splitting criteria for the random partitions. This completes the proof. 
\end{proof}

\begin{proof}[of Lemma \ref{VCindex}]  
The proof will be conducted by dint of geometric constructions.

For the first assertion, we choose a data set $A \subset \mathbb{R}^d$ with $\#(A) = 2^d + 2$ and consider firstly the general case that there exists $x \in A$ such that $x \in \mathrm{Co}(A \setminus \{x\})$, that is, $x$ lies in the convex hull of the set $A \setminus \{x\}$. Then there exists a set $A_1 \subset (A \setminus \{x\})$ such that
\begin{align*}
\#(A_1) = \#(A) - 2
\quad \text{ and } \quad
x \in \mathrm{Co}(A_1).
\end{align*}
Then for a fixed $B \in \pi_h$ with $A_1 \subset A \cap B$, there always holds
\begin{align*}
A_1 \cup \{x\} \subset A \cap B.
\end{align*}
Clearly, there exists no $B \in \pi_h$ such that $A \cap B = A_1$ and therefore $\pi_h$ cannot shatter $A$.

It remains to consider the case when $x \not\in \mathrm{Co}(A \setminus \{x\})$ holds for all $x \in A$. Obviously, the convex hull of $A$ forms a hyper-polyhedron whose vertices are the points of $A$. Note that the hyper-polyhedron can be regarded as an undirected graph, therefore as usual, we define the distance $d(x_1 , x_2)$ between a pair of samples $x_1$ and $x_2$ on the graph by the shortest path between them. Clearly, there exists a starting point $x_0 \in A$ such that $\deg(x) = 2^{d - 1}$.  Then we construct another data set $A_2 \neq A_1$ by
\begin{align*}
A_2 = \{ y : d(x_0, y) \mod 2 = 1, y \in A \}. 
\end{align*}
Again, for a fixed $B \in \pi_h$ such that $A_2 \subset A \cap B$, we deduce that there exists no $B \in \pi_h$ such that $A \cap B = A_2$ and therefore $\pi_h$ cannot shatter $A$ as well. By Definition  \ref{def::VCdimension}, we immediately obtain
\begin{align*} 
\mathrm{VC}(\pi_h) \leq 2^d+2.
\end{align*}

Next, we turn to prove the second assertion. The choice $k := \lfloor \frac{2r\sqrt{d}}{\underline{h}_0} \rfloor+1$ leads to the partition of $B_r$ of the form $\pi_k := \{ A_{i_1, \ldots, i_d} \}_{i_j = 1,\ldots,k}$ with
\begin{align} \label{def::cells}
A_{i_1, \ldots, i_d} 
:= \prod_{j=1}^d A_{i_j}
:= \prod_{j=1}^d \biggl[ - r + \frac{2r(i_j-1)}{k}, -r+\frac{2ri_j}{k} \biggr).
\end{align}
Obviously, we have $|A_{i_j}| \leq \frac{\underline{h}_0}{\sqrt{d}}$. Let $D$ be a data set with 
\begin{align*}
\#(D) = (d (2^d - 1) + 2) \biggl( \biggl\lfloor \frac{2r\sqrt{d}}{\underline{h}_0} \biggr\rfloor + 1 \biggr)^d. 
\end{align*}
Then there exists at least one cell $A$ with 
\begin{align} \label{DcapANo}
\#(D \cap A) \geq d(2^d-1)+2.
\end{align}
Moreover, for all $x, x' \in A$, the construction of the partition \eqref{def::cells} implies $\|x - x'\| \leq \underline{h}_0$.
Consequently, at most one vertex of $A_j$ induced by histogram transform $H$ lies in $A$, since the bandwidth of $A_j$ is larger than $\underline{h}_0$. 
Therefore, 
\begin{align*} 
{\Pi_h}_{|A} := \{B \cap A : B \in \Pi_h\}
\end{align*}
forms a partition of $A$ with $\#({\Pi_h}_{|A}) \leq 2^d$. It is easily seen that this partition can be generated by $2^d-1$ splitting hyper-planes. In this way, Lemma \ref{VCindexPre} implies that ${\Pi_h}_{|A}$ can only shatter a dataset with at most $d(2^d-1)+1$ elements. Thus \eqref{DcapANo} indicates that ${\Pi_h}_{|A}$ fails to shatter $D \cap A$ and therefore
$\Pi_h$ can not shatter the data set $D$ as well. By Definition \ref{def::VCdimension}, we immediately get 
\begin{align*} 
\mathrm{VC}(\Pi_h) 
\leq (d(2^d-1)+2) \biggl( \biggl\lfloor \frac{2r\sqrt{d}}{\underline{h}_0} \biggr\rfloor+1 \biggr)^d
\end{align*}
and the assertion is thus proved.
\end{proof}

\begin{proof}[of Lemma \ref{ScriptBhCoveringNumber}]
The first assertion concerning covering numbers of $\pi_h$ follows directly from Theorem 9.2 in \cite{Kosorok2008introcuction}. For the second estimate, we find the upper bound \eqref{VCMathcalBh}  of $\mathrm{VC}(\Pi_h)$ satisfies
\begin{align*} 
\bigl( d(2^d-1) + 2 \bigr) 
\biggl(\frac{2r\sqrt{d}}{\underline{h}_0}+1\biggr)^d 
\leq \bigl( (d+1)2^d \bigr) \biggl(\frac{3r\sqrt{d}}{\underline{h}_0}\biggr)^d
\leq 2d\cdot 2^d\biggl(\frac{3r\sqrt{d}}{\underline{h}_0}\biggr)^d
:= \biggl( \frac{c_dr}{\underline{h}_0}\biggr)^d,
\end{align*}
where the constant $c_d := 2^{1+\frac{1}{d}}\cdot 3\cdot d^{\frac{1}{d}+\frac{1}{2}}$. Again, Theorem 9.2 in \cite{Kosorok2008introcuction} yields the second assertion and thus completes the proof.
\end{proof}

\begin{proof}[of Proposition \ref{OracleInequalityL1}] 
By Lemma \ref{FundamentalLemma}, we have
\begin{align*}
\|f_{\mathrm{D}, H} - f_{\mathrm{P}, H} \|_{L_1(\mu)}
& =  \sum_{j \in \mathcal{I}_H} |\mathbb{E}_{\mathrm{D}} \eins_{A_j} - \mathbb{E}_{\mathrm{P}} \eins_{A_j}| 
                                              + |\mathbb{E}_{\mathrm{D}} \eins_{B_r^c} - \mathbb{E}_{\mathrm{P}} \eins_{B_r^c}|
\\
& \leq \sup_{\pi_H \in \Pi_H} \sum_{A \in \pi_H} |\mathbb{E}_{\mathrm{D}} \eins_A - \mathbb{E}_{\mathrm{P}} \eins_A|
                                              + |\mathbb{E}_{\mathrm{D}} \eins_{B_r^c} - \mathbb{E}_{\mathrm{P}} \eins_{B_r^c}|,
\end{align*}
where $\pi_H$ is a partition of $B_r$ and $\Pi_H$ denotes the collection of all partitions $\pi_H$. We define
\begin{align*}
\mathcal{B}(\pi_H)
:= \biggl\{ B : B = \bigcup_{j \in I} A_j, I \subset \mathcal{I}_H, A_j \in \pi_H \biggr\}
\end{align*}
as the collection of all $2^{|\mathcal{I}_H|}$ sets that can be expressed as the union of cells of $\pi_H$. Moreover, $\Pi_h$ defined as in \eqref{Bh} also can be used to denote the collection of all such unions, as $\pi_H$ ranges through $\Pi_H$,
that is,
\begin{align*}
\Pi_h = \bigl\{ \mathcal{B}(\pi_H) : \pi_H \in \Pi_H\bigr\}
\end{align*}
For a fixed $\pi_H$, we define
\begin{align*}
\widetilde{A} = \bigcup_{A \in \pi_H : \mathbb{E}_{\mathrm{D}} \eins_A \geq \mathbb{E}_{\mathrm{P}} \eins_A} A.
\end{align*}
Then we have
\begin{align*}
\sum_{A \in \pi_H} |\mathbb{E}_{\mathrm{D}} \eins_A - \mathbb{E}_{\mathrm{P}} \eins_A|
= 2 ( \mathbb{E}_{\mathrm{D}} \eins_{\widetilde{A}} - \mathbb{E}_{\mathrm{P}} \eins_{\widetilde{A}})
\leq 2 \sup_{B \in \Pi_h} |\mathbb{E}_{\mathrm{D}} \eins_B - \mathbb{E}_{\mathrm{P}} \eins_B|
\end{align*}
and
\begin{align}\label{EstErrorDecomp}
\|f_{\mathrm{D},H} - f_{\mathrm{P},H}\|_{L_1(\mu)}
& \leq \sup_{\pi_H \in \Pi_H} \sum_{A \in \pi_H} |\mathbb{E}_{\mathrm{D}} \eins_A - \mathbb{E}_{\mathrm{P}} \eins_A| 
                                                      + |\mathbb{E}_{\mathrm{D}} \eins_{B_r^c} - \mathbb{E}_{\mathrm{P}} \eins_{B_r^c}|
\nonumber\\
& \leq 2 \sup_{\pi_H \in \Pi_H} \sup_{B \in \mathcal{B}(\pi_H)} |\mathbb{E}_{\mathrm{D}} \eins_B - \mathbb{E}_{\mathrm{P}} \eins_B| \ + |\mathbb{E}_{\mathrm{D}} \eins_{B_r^c} - \mathbb{E}_{\mathrm{P}} \eins_{B_r^c}|
\nonumber\\
& = 2 \sup_{B \in \Pi_h} |\mathbb{E}_{\mathrm{D}} \eins_B - \mathbb{E}_{\mathrm{P}} \eins_B| 
                                    + |\mathbb{E}_{\mathrm{D}} \eins_{B_r^c} - \mathbb{E}_{\mathrm{P}} \eins_{B_r^c}|.
\end{align}

Let us first estimate
\begin{align*}
|\mathbb{E}_{\mathrm{D}} \eins_B - \mathbb{E}_{\mathrm{P}} \eins_B|
\end{align*}
by using Bernstein's inequality. For this purpose, we consider the map
\begin{align*}
\xi_i := \eins_B (x_i) - \mathbb{E}_{\mathrm{P}} \eins_B,
\end{align*}
where $x_i$ is the $i$-th sample. Then, we verify the following conditions: Obviously, we have $\mathbb{E}_{\mathrm{P}^n} \xi_i = 0$ and $\|\xi_i\|_{\infty} \leq 1$. Moreover, simple estimates imply
\begin{align}
\mathbb{E}_{\mathrm{P}^n} \xi_i^2 
\leq \mathbb{E}_{\mathrm{P}} \eins_B^2 - (\mathbb{E}_{\mathrm{P}} \eins_B)^2
& = \mathbb{E}_{\mathrm{P}} \eins_B - (\mathbb{E}_{\mathrm{P}} \eins_B)^2
\nonumber\\
& = \mathrm{P}(B) - \mathrm{P}(B)^2 = \mathrm{P}(B)(1-\mathrm{P}(B)) \leq \frac{1}{4}.
\label{VarianceBound}
\end{align}
Finally, it is easy to see that $(\xi_i)$ are independent with respect to $\mathrm{P}^n$. Therefore, we can apply Bernstein's inequality and obtain that for all $n \geq 1$, with probability at most $2 e^{-\tau}$, there holds
\begin{align} \label{FirstBernsteinEstimate}
|\mathbb{E}_{\mathrm{D}} \eins_B - \mathbb{E}_{\mathrm{P}} \eins_B|
= \biggl| \frac{1}{n} \sum_{i=1}^n \xi_i \biggr| 
\geq \sqrt{\frac{\tau}{2n}} + \frac{2 \tau}{3 n}.
\end{align}
We choose $B_1, \ldots, B_{m_1} \in \Pi_h$ such that $\{ B_1, \ldots, B_{m_1} \}$ is an $\varepsilon$-net of $\Pi_h$ with respect to $\|\cdot\|_{L_1(\mathrm{D})}$. Note that here we have $m_1 = \mathcal{N} (\eins_{\Pi_h}, \|\cdot\|_{L_1(\mathrm{D})}, \varepsilon)$ as in Lemma \ref{ScriptBhCoveringNumber}. Using \eqref{FirstBernsteinEstimate} and a union bound argument, we obtain
\begin{align} \label{BernEstimateMax}
\sup_{j = 1, \ldots, m_1} |\mathbb{E}_{\mathrm{D}} \eins_{B_j} - \mathbb{E}_{\mathrm{P}} \eins_{B_j}|
\leq \sqrt{\frac{\tau}{2n}} + \frac{2\tau}{3n}
\end{align}
with probability $\mathrm{P}^n$ at least $1 - 2 m_1 e^{-\tau}$.

Now, for any $B \in \Pi_h$, since $\{ B_1, \ldots, B_{m_1} \}$ is an $\varepsilon$-net of $\Pi_h$, there exists a $B_j$ such that
\begin{align*}
\bigl| |\mathbb{E}_{\mathrm{D}} \eins_B - \mathbb{E}_{\mathrm{P}} \eins_B| 
         - |\mathbb{E}_{\mathrm{D}} \eins_{B_j} - \mathbb{E}_{\mathrm{P}} \eins_{B_j}| \bigr|
& \leq |\mathbb{E}_{\mathrm{D}} \eins_B - \mathbb{E}_{\mathrm{P}} \eins_B 
          - (\mathbb{E}_{\mathrm{D}} \eins_{B_j} - \mathbb{E}_{\mathrm{P}} \eins_{B_j})|
\\
& \leq |\mathbb{E}_{\mathrm{D}} \eins_B - \mathbb{E}_{\mathrm{D}} \eins_{B_j}|
          + |\mathbb{E}_{\mathrm{P}} \eins_B - \mathbb{E}_{\mathrm{P}} \eins_{B_j}|.
\end{align*}
In the following, we estimate the terms on the right hand of the above inequality separately. For the first term, there holds
\begin{align*}
|\mathbb{E}_{\mathrm{D}} \eins_B - \mathbb{E}_{\mathrm{D}} \eins_{B_j}|
& = \biggl| \frac{1}{n} \sum_{i=1}^n (\eins_B (x_i) - \eins_{B_j} (x_i)) \biggr|
\\
& \leq \frac{1}{n} \sum_{i=1}^n |\eins_B (x_i) - \eins_{B_j} (x_i)|
    = \|\eins_B - \eins_{B_j}\|_{L_1(\mathrm{D})}
    \leq \varepsilon.
\end{align*}
As for the second term, we have
\begin{align*}
|\mathbb{E}_{\mathrm{P}} \eins_B - \mathbb{E}_{\mathrm{P}} \eins_{B_j}|
& = \biggl| \int_{\mathbb{R}^d} f \cdot (\eins_B - \eins_{B_j}) \, d\mu \biggr|
\\
& \leq \int_{\mathbb{R}^d} f \cdot |\eins_B - \eins_{B_j}| \, d\mu
\\
& = \|\eins_B - \eins_{B_j}\|_{L_1(\mu)}
    = \mathbb{E}_{\mathrm{P}} (\|\eins_B - \eins_{B_j}\|_{L_1(\mathrm{D})})
    \leq \varepsilon.
\end{align*}
Consequently, we obtain
\begin{align*}
|\mathbb{E}_{\mathrm{D}} \eins_B - \mathbb{E}_{\mathrm{P}} \eins_B|
\leq |\mathbb{E}_{\mathrm{D}} \eins_{B_j} - \mathbb{E}_{\mathrm{P}} \eins_{B_j}| + 2 \varepsilon.
\end{align*}
This together with \eqref{BernEstimateMax} implies that for any $B \in \Pi_h$, there holds
\begin{align} \label{BernEstimate2}
|\mathbb{E}_{\mathrm{D}} \eins_B - \mathbb{E}_{\mathrm{P}} \eins_B|
\leq \sqrt{\frac{\tau}{2n}} + \frac{2\tau}{3n} + 2 \varepsilon
\end{align}
with probability $\mathrm{P}^n$ at least $1 - 2 m_1 e^{-\tau}$. Next, we focus on bounding the second term on the right side of \eqref{EstErrorDecomp}. For a fixed $r > 0$, we consider the map
\begin{align*}
\tilde{\xi}_i := \eins_{B_r^c} (x_i) - \mathbb{E}_{\mathrm{P}} \eins_{B_r^c},
\end{align*}
where $x_i$ is the $i$-th sample. Then, we verify the following conditions: Obviously, we have $\mathbb{E}_{\mathrm{P}^n} \tilde{\xi}_i = 0$ and $\|\tilde{\xi}_i\|_{\infty} \leq 1$. Moreover, simliar estimates as \eqref{VarianceBound} imply
\begin{align*}
\mathbb{E}_{\mathrm{P}^n} \tilde{\xi}_i^2
\leq \mathrm{P}(B_r^c) (1 - \mathrm{P}(B_r^c))
\leq \frac{1}{4}.
\end{align*}
Also, we can see that $(\tilde{\xi}_i)$ are independent with respect to $\mathrm{P}^n$. Therefore, we can apply Bernstein's inequality once again and obtain that for all $n \geq 1$, with probability $\mathrm{P}^n$ at most $2 e^{-\tau}$, there holds
\begin{align} \label{BernEstimateComplement}
|\mathbb{E}_{\mathrm{D}} \eins_{B_r^c} - \mathbb{E}_{\mathrm{P}} \eins_{B_r^c}|
\geq \sqrt{\frac{\tau}{2n}} + \frac{2\tau}{3n}.
\end{align}
Combining \eqref{BernEstimate2} and \eqref{BernEstimateComplement} yields
\begin{align*}
\|f_{\mathrm{D},H} - f_{\mathrm{P},H}\|_{L_1(\mu)}
\leq \sqrt{\frac{9\tau}{2n}} + \frac{2\tau}{n} + 4 \varepsilon
\end{align*}
with probability $\mathrm{P}^n$ at least $1 - 2 (m_1+1) e^{-\tau}$. By a simple variable transformation, we see that with probability $\mathrm{P}^n$ at least $1 - e^{-\tau}$, there holds
\begin{align} \label{EstimationErrorInbetween}
\|f_{\mathrm{D},H} - f_{\mathrm{P},H}\|_{L_1(\mu)}
\leq \sqrt{\frac{9 (\tau + \log(2 m_1 + 2))}{2n}} + \frac{2(\tau+\log(2m_1+2))}{n} + 4 \varepsilon.
\end{align}
Next, we estimate the term $\log(2m_1+2)$ with $m_1 = \mathcal{N}(\eins_{\Pi_h}, \|\cdot\|_{L_1(\mathrm{D})}, \varepsilon)$. Lemma \ref{ScriptBhCoveringNumber} implies that for all $\varepsilon \in (0, 1 / \max \{ e, 2K+2 \})$, there holds
\begin{align}
\log(2m_1+2)
& \leq \log \biggl( 2K \biggl( \frac{c_dr}{\underline{h}_{0,n}} \biggr)^d (4 e)^{(\frac{c_dr}{\underline{h}_{0,n}})^d} \biggl( \frac{1}{\varepsilon} \biggr)^{(\frac{c_dr}{\underline{h}_{0,n}})^d} + 2 \biggr)
\nonumber\\
& \leq \log \biggl( (2K+2) \biggl( \frac{c_dr}{\underline{h}_{0,n}} \biggr)^d (4 e)^{(\frac{c_dr}{\underline{h}_{0,n}})^d} \biggl( \frac{1}{\varepsilon} \biggr)^{(\frac{c_dr}{\underline{h}_{0,n}})^d} \biggr)
\nonumber\\
& = \log (2K+2) + d \log \biggl( \frac{c_dr}{\underline{h}_{0,n}} \biggr) + \biggl( \frac{c_dr}{\underline{h}_{0,n}} \biggr)^d \log (4e) + \biggl( \frac{c_dr}{\underline{h}_{0,n}} \biggr)^d \log \biggl( \frac{1}{\varepsilon} \biggr)
\nonumber\\
& \leq 12 \biggl( \frac{c_dr}{\underline{h}_{0,n}} \biggr)^d  \log \biggl( \frac{1}{\varepsilon} \biggr),  
\label{ElemEstimate}
\end{align}
where the last inequality is based on the following basic inequalities: 
\begin{align*}
\log (2K+2) 
& \leq \log (1/\varepsilon) \leq (c_dr/\underline{h}_{0,n})^d\log (1/\varepsilon),
\\ 
d\log (c_dr/\underline{h}_{0,n}) 
& \leq (c_dr/\underline{h}_{0,n})^d \leq (c_dr/\underline{h}_{0,n})^d\log (1/\varepsilon) ,
\\ 
(c_dr/\underline{h}_{0,n})^d \log (4e) 
& \leq (c_dr/\underline{h}_{0,n})^d\log(e^3) 
   \leq 9(c_dr/\underline{h}_{0,n})^d \log(1/\varepsilon). 
\end{align*}
Now, when choosing $\varepsilon = 1/n$ and plugging \eqref{ElemEstimate} into \eqref{EstimationErrorInbetween}, we obtain
\begin{align*}
\|f_{\mathrm{D},H} - f_{\mathrm{P},H}\|_{L_1(\mu)}
\leq \sqrt{\frac{9(\tau+12(c_dr/\underline{h}_{0,n})^d \log n)}{2n}} + \frac{2(\tau+(c_dr/\underline{h}_{0,n})^d \log n)}{n} + \frac{4}{n}
\end{align*}
with probability $\mathrm{P}^n$ at least $1 - e^{-\tau}$. With the transformation $\tau := \log n$ we get the conclusion.
\end{proof}

\begin{proof}[of Proposition \ref{OracleInequalityInftyNorm}]
Since the density function $f$ considered has a bounded support, we choose $r$ large enough so that the entire support can be contained in $B_r$. According to Lemma \ref{FundamentalLemma}, we have
\begin{align}\label{eq::inftyrep}
\|f_{\mathrm{D},H} - f_{\mathrm{P},H}\|_{L_{\infty}(\mu)}
= \sup_{j \in \mathcal{I}_H} \frac{|\mathbb{E}_{\mathrm{D}} \eins_{A_j} - \mathbb{E}_{\mathrm{P}} \eins_{A_j}|}{\mu(A_j)}.
\end{align}
Let $\pi_h$ be as in \eqref{WidetildeMathcalB}.
Then we have
\begin{align}\label{eq::pih}
\sup_{j \in \mathcal{I}_H} \frac{|\mathbb{E}_{\mathrm{D}} \eins_{A_j} - \mathbb{E}_{\mathrm{P}} \eins_{A_j}|}{\mu(A_j)} 
\leq \sup_{A \in \pi_h} \frac{|\mathbb{E}_{\mathrm{D}} \eins_A - \mathbb{E}_{\mathrm{P}} \eins_A|}{\mu(A)} .
\end{align}
For a fixed $A \in \pi_h$, we estimate
\begin{align*}
\frac{|\mathbb{E}_{\mathrm{D}} \eins_A - \mathbb{E}_{\mathrm{P}} \eins_A|}{\mu(A)} 
\end{align*}
by using Bernstein's inequality. For this purpose, we consider the map
\begin{align*}
\zeta_i := \frac{\eins_A (x_i) - \mathbb{E}_{\mathrm{P}} \eins_A}{\mu(A)},
\end{align*}
where $x_i$ is the $i$-th sample. It is easy to see that $(\zeta_i)$ are independent with respect to $\mathrm{P}^n$. Then, for all cell $A \in \pi_h$ with $\mu(A) \geq \underline{h}_{0,n}^d$, we verify the following conditions: Obviously, we have $\mathbb{E}_{\mathrm{P}^n} \zeta_i = 0$ and 
\begin{align*}
\|\zeta_i\|_{\infty} \leq \frac{1}{\mu(A)} \leq \frac{1}{\underline{h}_{0,n}^d}. 
\end{align*}
Moreover, elementary considerations yield
\begin{align*}
\mathbb{E}_{\mathrm{P}^n} \zeta_i^2 
\leq \frac{\mathbb{E}_{\mathrm{P}} \eins_A^2}{\mu^2(A)} 
= \frac{\mathbb{E}_{\mathrm{P}} \eins_A}{\mu^2(A)} 
= \frac{\mathrm{P}(A)}{\mu^2(A)} 
& = \frac{1}{\mu^2(A)} \int_A f(x) \, d\mu(x)
\\
& \leq \frac{\|f\|_{L_{\infty}(\mu)}}{\mu^2(A)} \int_A 1 \, d\mu(x)
= \frac{\|f\|_{L_{\infty}(\mu)}}{\mu(A)} 
\leq \frac{\|f\|_{L_{\infty}(\mu)}}{\underline{h}_{0,n}^d}.
\end{align*}
Therefore, we can apply Bernstein's inequality and obtain that for all $n \geq 1$, with probability at most $2 e^{-\tau}$, there holds
\begin{align} \label{BernEstimateInfty1}
\frac{|\mathbb{E}_{\mathrm{D}} \eins_A - \mathbb{E}_{\mathrm{P}} \eins_A|}{\mu(A)} 
\geq \sqrt{\frac{2 \|f\|_{L_{\infty}(\mu)} \tau}{n \underline{h}_{0,n}^d}} + \frac{2\tau}{3n\underline{h}_{0,n}^d}.
\end{align}
For any probability measure $\mathrm{Q}$, with
\begin{align*}
\tilde{\varepsilon} := \frac{\underline{h}_{0,n}^{2d} \varepsilon}{\underline{h}_{0,n}^d + \mu(B_r)}
\end{align*}
we choose $A_1, \ldots, A_{m_2} \in \pi_h$ such that $\{ A_1, \ldots, A_{m_2} \}$ is an $\tilde{\varepsilon}$-net of $\pi_h$ with respect to $\|\cdot\|_{L_1(\mathrm{Q})}$, where $m_2 = \mathcal{N}(\pi_h, \|\cdot\|_{L_1(\mathrm{Q})}, \tilde{\varepsilon})$. 
Then the estimate \eqref{BernEstimateInfty1} together with a union bound argument yields that
\begin{align} \label{BernEstimateInftyMax}
\sup_{j \in \{ 1, \ldots, m_2 \}} \frac{|\mathbb{E}_{\mathrm{D}} \eins_{A_j} - \mathbb{E}_{\mathrm{P}} \eins_{A_j}|}{\mu(A_j)} 
\leq \sqrt{\frac{2\|f\|_{L_{\infty}(\mu)} \tau}{n\underline{h}_{0,n}^d}} + \frac{2\tau}{3n\underline{h}_{0,n}^d}
\end{align}
holds with probability $\mathrm{P}^n$ at least $1 - 2 m_2 e^{-\tau}$.
Moreover, the definition of an $\tilde{\varepsilon}$-net implies that
for any $A \in \pi_h$, there exists an $A_j$, $j \in \{ 1, \ldots, m_2 \}$ such that
\begin{align*}
\|\eins_A - \eins_{A_j}\|_{L_1(\mathrm{Q})} \leq \tilde{\varepsilon}.
\end{align*}
Therefore,
\begin{align} 
\biggl\| \frac{\eins_A}{\mu(A)} - \frac{\eins_{A_j}}{\mu(A_j)} \biggr\|_{L_1(\mathrm{Q})}
& = \biggl\| \frac{\eins_A}{\mu(A)} - \frac{\eins_A}{\mu(A_j)} + \frac{\eins_A}{\mu(A_j)} - \frac{\eins_{A_j}}{\mu(A_j)}  \biggr\|_{L_1(\mathrm{Q})}
\nonumber\\
& \leq \frac{|\mu(A_j) - \mu(A)|}{\mu(A) \mu(A_j)} \cdot \|\eins_A\|_{L_1(\mathrm{Q})}
+ \frac{\|\eins_A - \eins_{A_j}\|_{L_1(\mathrm{Q})}}{\mu(A_j)},
\label{EstimateMuAAj}
\end{align}
where $\mu(A) \geq \underline{h}_{0,n}^d$ and $\mu(A_j) \geq \underline{h}_{0,n}^d$. Now, we bound two terms on the right hand of the above inequality separately. For the second term, it can be apparently seen that
\begin{align} \label{EstimateMuAAj1}
\frac{\|\eins_A - \eins_{A_j}\|_{L_1(\mathrm{Q})}}{\mu(A_j)} 
\leq \frac{\tilde{\varepsilon}}{\mu(A_j)} 
\leq \frac{\tilde{\varepsilon}}{\underline{h}_{0,n}^d}.
\end{align}
On the other hand, if $\mathrm{Q}$ is the uniform distribution on $B_r$, then we have
\begin{align*}
\frac{|\mu(A)-\mu(A_j)|}{\mu(B_r)} 
& = \biggl| \int \eins_A - \eins_{A_j} \, d\mu \biggr|
\\
& \leq \frac{1}{\mu(B_r)} \int |\eins_A - \eins_{A_j}| \, d\mu
   = \|\eins_A - \eins_{A_j}\|_{L_1(\mathrm{Q})}
\leq \tilde{\varepsilon}.
\end{align*}
Consequently we obtain
\begin{align} \label{EstimateMuAAj2}
|\mu(A) - \mu(A_j)| \leq \tilde{\varepsilon} \mu(B_r).
\end{align}
If $\mathrm{Q} = \mathrm{D}$, combining \eqref{EstimateMuAAj2} with \eqref{EstimateMuAAj1}, we can bound \eqref{EstimateMuAAj} by
\begin{align} \label{EstimateMuAAj3}
\biggl\| \frac{\eins_A}{\mu(A)} - \frac{\eins_{A_j}}{\mu(A_j)} \biggr\|_{L_1(\mathrm{D})}
\leq \frac{\underline{h}_{0,n}^d + \mu(B_r)}{\underline{h}_{0,n}^{2d}} \tilde{\varepsilon}
= \varepsilon.
\end{align}
Similarly, if $\mathrm{Q} = \mathrm{P}$, it can be deduced that
\begin{align} \label{EstimateMuAAjP}
\biggl\| \frac{\eins_A}{\mu(A)} - \frac{\eins_{A_j}}{\mu(A_j)} \biggr\|_{L_1(\mu)} \leq \varepsilon.
\end{align}
Then, \eqref{EstimateMuAAj3} and \eqref{EstimateMuAAjP} imply that for any $A \in \pi_h$, there holds
\begin{align*}
\biggl| \frac{|\mathbb{E}_{\mathrm{D}} \eins_A - \mathbb{E}_{\mathrm{P}} \eins_A|}{\mu(A)} 
- \frac{|\mathbb{E}_{\mathrm{D}} \eins_{A_j} - \mathbb{E}_{\mathrm{P}} \eins_{A_j}|}{\mu(A_j)} \biggr|
& \leq \biggl| \frac{\mathbb{E}_{\mathrm{D}} \eins_A - \mathbb{E}_{\mathrm{P}} \eins_A}{\mu(A)} 
- \frac{\mathbb{E}_{\mathrm{D}} \eins_{A_j} - \mathbb{E}_{\mathrm{P}} \eins_{A_j}}{\mu(A_j)} \biggr|
\\
& \leq \biggl| \frac{\mathbb{E}_{\mathrm{D}} \eins_A}{\mu(A)}  
- \frac{\mathbb{E}_{\mathrm{D}} \eins_{A_j}}{\mu(A_j)} \biggr|
+ \biggl| \frac{\mathbb{E}_{\mathrm{P}} \eins_A}{\mu(A)} 
- \frac{\mathbb{E}_{\mathrm{P}} \eins_{A_j}}{\mu(A_j)} \biggr|
\\
& \leq \biggl\| \frac{\eins_A}{\mu(A)} - \frac{\eins_{A_j}}{\mu(A_j)} \biggr\|_{L_1(\mathrm{D})}
+ \biggl\| \frac{\eins_A}{\mu(A)} - \frac{\eins_{A_j}}{\mu(A_j)} \biggr\|_{L_1(\mu)}
\\
& \leq 2 \varepsilon
\end{align*}
and consequently we have
\begin{align} \label{eq::resident}
\frac{|\mathbb{E}_{\mathrm{D}} \eins_A - \mathbb{E}_{\mathrm{P}} \eins_A|}{\mu(A)} 
\leq \frac{|\mathbb{E}_{\mathrm{D}} \eins_{A_j} - \mathbb{E}_{\mathrm{P}} \eins_{A_j}|}{\mu(A_j)}  
+ 2 \varepsilon.
\end{align}
This together with \eqref{BernEstimateInftyMax} implies that for any $A \in \pi_h$, there holds
\begin{align} \label{EstimateInbetween2}
\frac{|\mathbb{E}_{\mathrm{D}} \eins_A - \mathbb{E}_{\mathrm{P}} \eins_A|}{\mu(A)} 
\leq \sqrt{\frac{2 \|f\|_{L_{\infty}(\mu)} \tau}{n\underline{h}_{0,n}^d}} + \frac{2\tau}{3n\underline{h}_{0,n}^d} + 2 \varepsilon
\end{align}
with probability $\mathrm{P}^n$ at least $1 - 2  m_2 e^{-\tau}$. By a simple variable transformation, we see that for any $\mu(A_j) \geq \underline{h}_{0,n}^d$, there holds
\begin{align} \label{EstimErrorInfty}
\|f_{\mathrm{D},H} - f_{\mathrm{P},H}\|_{L_{\infty}(\mu)}
\leq \sqrt{\frac{2 \|f\|_{L_{\infty}(\mu)} (\tau + \log(2m_2))}{n\underline{h}_{0,n}^d}}
+ \frac{2(\tau+\log(2m_2))}{3n\underline{h}_{0,n}^d} + 2 \varepsilon
\end{align}
with probability $\mathrm{P}^n$ at least $1 - e^{-\tau}$. Next, we estimate the term $\log(2m_2)$ with $m_2 := \mathcal{N}(\eins_{\pi_h}, \|\cdot\|_{L_1(\mathrm{Q})}, \tilde{\varepsilon})$. The estimate \eqref{CollectionCoveringNumber} implies that for any $\varepsilon \in (0, 1/\max \{ e, 2K, \mu(B_r) \})$, there holds
\begin{align}
\log(2m_2)
& \leq \log (2K (2^d+2) (4e)^{2^d+2} ( 1/\tilde{\varepsilon})^{2^d+1})
\nonumber\\
&\leq \log (2K (2^d+2) (4e)^{2^d+2}( \mu(B_r) + \underline{h}_{0,n}^d)^{2^d+1}(1/\varepsilon\underline{h}_{0,n}^{2d})^{2^d+1})
\nonumber\\
&= \log(2K) + \log(2^d + 2) + (2^d + 2)\log(4e)
+ (2^d + 1)\log(\mu(B_r) + \underline{h}_{0,n}^d)
\nonumber\\
&\quad + (2^d + 1)\log(1/\varepsilon) + 2(2^d + 1)\log(1/\underline{h}_{0,n}^d)
\nonumber\\
&\leq 8 \cdot 2^{d+1} \log(1/\varepsilon) + 2\cdot 2^{d+1}\log(1/\underline{h}_{0,n}^d)
\nonumber\\
&\leq 2^{d+4} \log(1/\varepsilon) +  2^{d+2}\log(1/\underline{h}_{0,n}^d),
        \label{EstimateInbetween3}
\end{align}
where the last inequality is based on the following basic inequalities: 
\begin{align*}
\log(2K) 
& \leq \log(1/\varepsilon) \leq 2^{d+1} \log(1/\varepsilon),
\\ 
\log(2^d+2) 
& \leq 2^d+2 \leq 2^{d+1} \leq 2^{d+1} \log(1/\varepsilon),
\\ 
(2^d+2) \log(4e) 
& \leq 2^{d+1} \log(e^3) \leq 3 \cdot 2^{d+1} \log(1/\varepsilon),
\\ 
(2^d + 1) \log(\mu(B_r)+ \underline{h}_{0,n}^d) 
& \leq 2^{d + 1} \log(2\mu(B_r)) \leq 2\cdot 2^{d + 1} \log(1/\varepsilon). 
\end{align*}
If $x\in B^{+}_{r, \sqrt{d} \cdot \overline{h}_{0,n}}$, then $\mu(A(x))\geq \overline{h}_{0,n}^d$. Now, when choosing $\varepsilon = 1/n$ and plugging \eqref{EstimateInbetween3} into \eqref{EstimErrorInfty}, there holds 
\begin{align}
\|f_{\mathrm{D},H} - f_{\mathrm{P},H}\|_{L_{\infty}(\mu)}
& \leq \sqrt{\frac{2 \|f\|_{L_{\infty}(\mu)} (\tau + 2^{d+4} \log n + 2^{d+2}\log(1/\underline{h}_{0,n}^d))}{n\underline{h}_{0,n}^d}}
\nonumber\\
& \phantom{=}
         + \frac{2(\tau+2^{d+4}\log n+2^{d+2}\log(1/\underline{h}_{0,n}^d))}{3n\underline{h}_{0,n}^d} + \frac{2}{n}
           \label{EstimErrorInfty2}
\end{align}
for all $x \in B^+_{r, \sqrt{d} \cdot \overline{h}_{0,n}}$ with probability $\mathrm{P}^n$ at least $1 - e^{-\tau}$.
\end{proof}

\subsubsection{Proofs Related to Section \ref{subsec::C0}}

\begin{proof}[of Theorem \ref{ConsistencyL1}]
Since $\overline{h}_{0,n} \to 0$, there exists $n_1(\varepsilon) \in \mathbb{N}$ such that for all $n > n_1 $, Proposition \ref{ApproximationError} implies that
\begin{align*}
\|f_{\mathrm{P},H} - f\|_{L_1(\mu)} \leq \varepsilon. 
\end{align*}
Moreover, Proposition \ref{OracleInequalityL1} tells us that if $\log n/(n\underline{h}_{0,n}^d) \to 0$, there exists $n_2 > 0$ such that for all $n > n_2$, there holds 
\begin{align*}
\|f_{\mathrm{D},H} - f_{\mathrm{P},H}\|_{L_1(\mu)}
\leq \varepsilon. 
\end{align*}
Combining the above two inequalities, we obtain the assertion. 
\end{proof}

\begin{proof}[of Theorem \ref{theorem::ConvergenceRatesL1}]
\textit{(i)} Combining the estimates in Proposition \ref{OracleInequalityL1}, Proposition \ref{ApproximationError}  and Proposition \ref{L1LInftyRelation}, we know that with probability $\nu_n$ at least $1 - e^{-\tau}$, there holds
\begin{align*}
\|f_{\mathrm{D},H_n} - f\|_{L_1(\mu)}
& \leq \sqrt{\frac{9(\tau+12(c_dr/\underline{h}_0)^d \log n)}{2n}} 
          + \frac{2(\tau+(c_dr/\underline{h}_0)^d \log n)}{n} + \frac{4}{n}
\\
&\phantom{=} 
         + c_L r^d \overline{h}_0^{\alpha} + 2 \mathrm{P} \bigl( B^c_r \bigr).
\end{align*}
When taking $\tau := \log n$, we have 
\begin{align*}
\|f_{\mathrm{D},H_n} - f\|_{L_1(\mu)}\lesssim \sqrt{\frac{r^d \log n}{n\underline{h}_0^d}} + r^d \overline{h}_0^{\alpha} + \mathrm{P} \bigl( B^c_r \bigr).
\end{align*}
Therefore, with probability $\mathrm{P}^n\otimes \mathrm{P}_H$ at least $1-1/n$ there holds
\begin{align*}
\|f_{\mathrm{D},H_n} - f\|_{L_1(\mu)}
\lesssim \sqrt{\frac{r^d \log n}{n\underline{h}_{0,n}^d}} + r^{- \eta d} + r^d \overline{h}_{0,n}^{\alpha}.
\end{align*}
By choosing
\begin{align*}
\underline{h}_{0,n} 
& := (\log n / n)^{\frac{d+\eta}{\eta(2\alpha+d)+d(\alpha+d)}},
\\
r_n 
& := (n / \log n)^{\frac{\alpha}{d\eta(2\alpha+d)+d(\alpha+d)}},
\end{align*}
there holds 
\begin{align*}
\|f_{\mathrm{D},H_n} - f\|_{L_1(\mu)} 
\lesssim (\log n / n)^{\frac{\alpha\eta }{\eta(2\alpha+d)+(\alpha+d)}}.
\end{align*}

\textit{(ii)} Similar to case \textit{(i)}, one can show that with probability $\mathrm{P}^n\otimes \mathrm{P}_H$ at least $1-1/n$ there holds
\begin{align*}
\|f_{\mathrm{D},H_n} - f\|_{L_1(\mu)}
\lesssim  \sqrt{\frac{r^d \log n}{n\underline{h}_{0,n}^d}} + e^{- a r^\eta} + r^d \overline{h}_{0,n}^{\alpha}.
\end{align*}
By choosing
\begin{align*}
\underline{h}_{0,n} 
& := (\log n / n)^{\frac{1}{2\alpha+d}} (\log n)^{- \frac{d}{\eta} \cdot \frac{1}{2\alpha+d}},
\\
r_n 
& := \bigl( \alpha \log n / (a(2\alpha+d)) \big)^{\frac{1}{\eta}},
\end{align*}
we obtain 
\begin{align*}
\|f_{\mathrm{D},H_n} - f\|_{L_1(\mu)}
\lesssim (\log n / n)^{\frac{\alpha}{2\alpha+d}} (\log n)^{\frac{d}{\eta} \cdot \frac{\alpha+d}{2\alpha+d}}.
\end{align*}

\textit{(iii)} Once again, similar to case \textit{(i)}, it can be showed that with probability $\mathrm{P}^n\otimes \mathrm{P}_H$ at least $1-\frac{1}{n}$, there holds
\begin{align*}
\|f_{\mathrm{D},H_n} - f_{\mathrm{P},H_n}\|_{L_1(\mu)}
\lesssim \sqrt{\frac{\log n}{n\underline{h}_{0,n}^d}} + \overline{h}_{0,n}^\alpha.
\end{align*}
With $\underline{h}_{0,n}$ chosen as  
\begin{align*}
\underline{h}_{0,n} := (\log n / n)^{\frac{1}{2\alpha+d}},
\end{align*}
we obtain 
\begin{align*}
\|f_{\mathrm{D},H_n} - f_{\mathrm{P},H_n}\|_{L_1(\mu)} 
\lesssim (\log n/n)^{\frac{\alpha}{2\alpha+d}}.
\end{align*}
The proof of Theorem \ref{theorem::ConvergenceRatesL1} is thus completed. 
\end{proof}

\begin{proof}[of Theorem \ref{ConvergenceRatesLInfty}]
Similar as the proof of Theorem \ref{theorem::ConvergenceRatesL1} \textit{(iii)}, we conclude that the desired estimate is an easy consequence if we combine the estimates in Proposition \ref{OracleInequalityInftyNorm} and Proposition \ref{ApproximationError} \textit{(ii)} and choose 
\begin{align*}
\underline{h}_{0,n} :=  (\log n/n)^{\frac{1}{2\alpha+d}}.
\end{align*}
We omit the details of the proof here.
\end{proof}

\begin{proof}[of Theorem \ref{ensembleL1}]
Considering the relationship between the $L_1(\mu)$-error of the histogram transform ensemble and the $L_1(\mu)$-errors of single histogram transforms, there holds
\begin{align*}
\|f_{\mathrm{D},\mathrm{E}} - f\|_{L_1(\mu)}
= \biggl\| \frac{1}{T} \sum_{t=1}^T (f_{\mathrm{D},H_t} - f) \biggr\|_{L_1(\mu)}
\leq \frac{1}{T} \sum_{t=1}^T \|f_{\mathrm{D},H_t} - f\|_{L_1(\mu)}.
\end{align*}
In the following, we only present the analysis of the case \emph{(i)} in the three tail probability distributions, since the proof of \emph{(ii)} and \emph{(iii)} are quite similar.

According to Theorem \ref{theorem::ConvergenceRatesL1}, for any $t \in \{ 1, \ldots, T \}$, with 
\begin{align*}
r_n := (n/\log n)^{\frac{\alpha}{d\eta(2\alpha+d)+d(\alpha+d)}}, 
\end{align*}
then there exists a constant $c$ such that
\begin{align*}
\|f_{\mathrm{D},H_t} - f\|_{L_1(\mu)}
> c (\log n/n)^{\frac{\alpha\eta}{(2\alpha+d)\eta+(\alpha+d)}}
=: \mathcal{E} 
\end{align*}
holds with probability $\nu_n$ at least $1 - 1/n$. Then the union bound yields that, for all $\tau > 0$, there holds
\begin{align*}
\nu_n ( \|f_{\mathrm{D},\mathrm{E}} - f\|_{L_1(\mu)} > \mathcal{E} )
\leq \sum_{t=1}^T \nu_n ( \|f_{\mathrm{D},H_t} - f\|_{L_1(\mu)} > \mathcal{E} )
\leq T/n
\end{align*}
and consequently 
\begin{align*}
\|f_{\mathrm{D},\mathrm{E}} - f\|_{L_1(\mu)}
\leq c (\log n/n)^{\frac{\alpha\eta}{(2\alpha+d)\eta+(\alpha+d)}}
\end{align*}
holds with probability $\nu_n$ at least $1 - T/n$.
\end{proof}

\begin{proof}[of Theorem \ref{ensembleLinfty}] 
Let us first consider the relationship between the $L_{\infty}(\mu)$-error of the histogram transform ensemble 
density estimator and the $L_{\infty}(\mu)$-errors of single histogram transform estimators contained in the ensemble. For all $x \in B^+_{r, \sqrt{d} \cdot \overline{h}_{0,n}}$, there holds
\begin{align*}
\|f_{\mathrm{D},\mathrm{E}} - f\|_{L_{\infty}(\mu)}
= \biggl\| \frac{1}{T} \sum_{t=1}^T (f_{\mathrm{D},H_t} - f) \biggr\|_{L_{\infty}(\mu)}
\leq \frac{1}{T} \sum_{t=1}^T \|f_{\mathrm{D},H_t} - f\|_{L_{\infty}(\mu)}.
\end{align*}
The desired estimate can be obtained by choosing the same bandwidth for each partition in $\{ \pi_{H_t} \}_{t=1}^T$, which is
\begin{align*}
\overline{h}_{0,n} =(\log n / n)^{\frac{1}{2\alpha+d}}.
\end{align*}
We omit the details of the proof here for it is similar to that of Theorem \ref{ensembleL1}. 
\end{proof}

\subsection{Proofs of Results in the Space $C^{1, \alpha}$}

The following Lemma presents the explicit representation of $A_H(x)$ which will  play a key role later in the proofs of subsequent sections.

\begin{lemma}\label{binset}
Let the histogram transform $H$ be defined as in \eqref{HistogramTransform} and $A'_H$, $A_H$ be as in \eqref{TransBin} and \eqref{equ::InputBin} respectively. Then for all $x \in \mathbb{R}^d$, the set $A_H(x)$ can be represented as 
\begin{align*}
A_H(x) = \bigl\{ x + (R \cdot S)^{-1} z \ : \  z \in [-b', 1 - b'] \bigr\},
\end{align*}
where $b' := H(x) - \lfloor H(x) \rfloor \sim \mathrm{Unif}(0, 1)^d$.
\end{lemma}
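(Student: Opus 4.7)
The plan is to unravel the definitions \eqref{TransBin} and \eqref{equ::InputBin} step by step and then perform the natural change of variables $x' \mapsto z := H(x') - H(x)$. First, I would combine the two definitions to observe that
\begin{align*}
A_H(x) = \{ x' \in \mathbb{R}^d : \lfloor H(x') \rfloor = \lfloor H(x) \rfloor \},
\end{align*}
i.e., $A_H(x)$ is precisely the $H$-preimage of the unit integer cell containing $H(x)$.

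Next, because the translation $b$ cancels in the difference $H(x') - H(x) = R \cdot S \cdot (x' - x)$, setting $z := H(x') - H(x)$ yields the bijective parametrization $x' = x + (R \cdot S)^{-1} z$ (the invertibility of $R$ and $S$ is guaranteed by \eqref{RotationMatrix} and the strict positivity of the diagonal entries of $S$). The condition $\lfloor H(x) + z \rfloor = \lfloor H(x) \rfloor$ is then straightforward to translate: writing $H(x) = \lfloor H(x) \rfloor + b'$ with $b' \in [0,1)^d$, the condition becomes the componentwise requirement $z_i \in [-b'_i, 1 - b'_i)$, which is exactly the multi-index box $[-b', 1 - b']$ in the notation introduced in Section \ref{sec::prelims} (modulo a measure-zero boundary that is immaterial for later volume-based arguments).

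Finally, the distributional claim $b' \sim \mathrm{Unif}(0,1)^d$ follows from the classical translation invariance of the uniform distribution modulo one: conditionally on $R$ and $S$, which are independent of $b$, the fractional part of $H(x) = R \cdot S \cdot x + b$ equals $(R \cdot S \cdot x + b) \bmod 1$, and since $b \sim \mathrm{Unif}(0,1)^d$, adding the deterministic shift $R \cdot S \cdot x$ and reducing modulo one leaves a $\mathrm{Unif}(0,1)^d$ random vector; marginalizing over $(R,S)$ preserves uniformity. There is no serious obstacle here; the lemma is a bookkeeping exercise once one recognizes the correct change of variables and invokes the torus-translation invariance of $\mathrm{Unif}(0,1)^d$.
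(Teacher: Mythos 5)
Your proposal is correct and follows essentially the same route as the paper's proof: the change of variables $z := H(x') - H(x) = R\cdot S\cdot(x'-x)$, inversion to $x' = x + (R\cdot S)^{-1}z$, and translation of the condition $\lfloor H(x')\rfloor = \lfloor H(x)\rfloor$ into $z \in [-b', 1-b']$. You merely add detail the paper leaves implicit, namely the mod-one translation-invariance argument for $b' \sim \mathrm{Unif}(0,1)^d$ (which the paper simply asserts) and the measure-zero half-open versus closed boundary remark.
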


\begin{proof}[of lemma \ref{binset}]
For all $x \in \mathbb{R}^d$, we have $b' \sim \mathrm{Unif}(0,1)^d$ according to the definition of $H$. Moreover, for all $x' \in A'_H(x)$, we define
\begin{align*}
z := H(x') - H(x) = (R \cdot S) (x' - x).
\end{align*}
Then we have
\begin{align*}
x' = x + (R \cdot S)^{-1} z.
\end{align*}
Moreover, since $\lfloor H(x') \rfloor = \lfloor H(x) \rfloor$, we have $z \in [-b', 1 - b']$. 
\end{proof}

\subsubsection{Proofs Related to Section \ref{subsubsec::AppError1}}

\begin{proof}[of Proposition \ref{ApproximationError::LTwo}]
For any $x \in B_r$, the independence of $\{ f_{\mathrm{P},H_t}(x) \}_{t=1}^T$ implies
\begin{align} \label{EnsembleApproxErrorDecomp}
\mathbb{E}_{\mathrm{P}_H} \bigl( f_{\mathrm{P},\mathrm{E}}(x) - f(x) \bigr)^2
= \bigl( \mathbb{E}_{\mathrm{P}_H} ( f_{\mathrm{P},H}(x) ) - f(x) \bigr)^2
+ \frac{1}{T} \cdot \mathrm{Var}_{\mathrm{P}_H}(f_{\mathrm{P},H}(x)).
\end{align}

Let us consider the first term in the RHS of \eqref{EnsembleApproxErrorDecomp}.
Lemma \ref{binset} implies that for any $x' \in A_H(x)$, there exist a random vector $u \sim \mathrm{Unif}[0,1]^d$ and a vector $v \in [0,1]^d$ such that 
\begin{align} \label{xPrimex}
x' = x + S^{-1} R^{\top} (- u + v).
\end{align}
Therefore, we have
\begin{align}
dx' = \det \biggl( \frac{dx'}{dv} \biggr) dv
& = \det \biggl( \frac{d(x + S^{-1} R^{\top}(- u + v))}{dv} \biggr) dv
\nonumber\\
& = \det (R S^{-1}) dv
= \biggl( \prod_{i=1}^d h_i \biggr) dv.
\label{JacobiTrans}
\end{align}
Taking the first-order Taylor expansion of $f(x')$ at $x$, we get
\begin{align} \label{TaylorExpansion}
f(x') - f(x) = \int_0^1 \bigl( \nabla f(x + t(x' - x)) \bigr)^{\top} (x' - x) \, dt. 
\end{align}
Moreover, we obviously have
\begin{align} \label{Trivial}
\nabla f(x)^{\top} (x' - x)
= \int_0^1 \nabla f(x)^{\top} (x' - x) \, dt. 
\end{align}
Thus, \eqref{TaylorExpansion} and \eqref{Trivial} imply that for any $f \in C^{1, \alpha}$, there holds
\begin{align*}
\bigl| f(x') - f(x) - \nabla f(x)^{\top} (x' - x) \bigr|
&= \biggl| \int_0^1 \bigl( \nabla f(x + t(x' - x)) - \nabla f(x) \bigr)^{\top} (x' - x) \, dt \biggr|
\\
& \leq \int^1_0 c_L (t \|x' - x\|_2)^{\alpha} \|x' - x\|_2 \, dt
\\
& \leq c_L \|x' - x\|^{1+\alpha}. 
\end{align*}
This together with \eqref{xPrimex} yields
\begin{align*}
\bigl| f(x') - f(x) - \nabla f(x)^{\top} S^{-1} R^{\top} (- u + v) \bigr|
\leq c_L \overline{h}_0^{1+\alpha}
\end{align*}
and consequently there exists a constant $c_{\alpha} \in [-c_L, c_L]$ such that
\begin{align} \label{TaylorEntwicklung}
f(x') - f(x) = \nabla f(x)^{\top} S^{-1}R^{\top} (- u + v) + c_{\alpha} \overline{h}_0^{1+\alpha}.
\end{align}
For any $x \in B_{r,\sqrt{d} \cdot \overline{h}_0}^+$, there holds
\begin{align*}
f_{\mathrm{P},H}(x) 
= \frac{\mathrm{P}(A_H(x))}{\mu(A_H(x))}
= \frac{1}{\mu(A_H(x))} \int_{A_H(x)} f(x') \, dx'.
\end{align*}
This together with \eqref{TaylorEntwicklung} and \eqref{JacobiTrans} yields
\begin{align}
f_{\mathrm{P},H}(x) - f(x)
& = \frac{1}{\mu(A_H(x))} \int_{A_H(x)} f(x') \, dx' - f(x)
\nonumber\\
& = \frac{1}{\mu(A_H(x))} \int_{A_H(x)} \bigl( f(x') - f(x) \bigr) \, dx'
\nonumber\\
& = \frac{\prod_{i=1}^d h_i}{\mu(A_H(x))}  
\int_{[0,1]^d} \Bigl( \nabla f(x)^{\top} S^{-1} R^{\top} (- u + v) + c_{\alpha} \overline{h}_0^{1+\alpha} \Bigr) \, dv
\nonumber\\
& = \bigg( \int_{[0,1]^d} (- u + v)^{\top} \, dv \biggr) R S^{-1} \nabla f(x) + c_{\alpha} \overline{h}_0^{1+\alpha}
\nonumber\\
& = \biggl( \frac{1}{2} - u \biggr)^{\top} R S^{-1} \nabla f(x) + c_{\alpha} \overline{h}_0^{1+\alpha}.
\label{StepOne}
\end{align}
Since the random variables $(u_i)_{i=1}^d$ are independent and identically distributed as $\mathrm{Unif}[0, 1]$, we have
\begin{align} \label{CrossTermPropertyy1}
\mathbb{E}_{\mathrm{P}_H} \bigg(\frac{1}{2}-u_i \bigg) = 0,
\qquad \qquad
i = 1, \ldots, d.
\end{align}
Combining \eqref{StepOne} with \eqref{CrossTermPropertyy1}, we obtain
\begin{align}
\mathbb{E}_{\mathrm{P}_H} ( f_{\mathrm{P},H}(x) - f(x) )
= 0 + c_{\alpha} \overline{h}_0^{1+\alpha}
= c_{\alpha} \overline{h}_0^{1+\alpha}.
\end{align}
and consequently
\begin{align}\label{equ::biasbound}
\bigl( \mathbb{E}_{\mathrm{P}_H} ( f_{\mathrm{P},H_1}(x) ) - f(x) \bigr)^2
\leq c_L^2 \overline{h}_0^{2(1+\alpha)}.
\end{align}
For the second term in the RHS of \eqref{EnsembleApproxErrorDecomp}, using the assumption $f \in C^{1,\alpha}$, we get
\begin{align} 
\mathrm{Var}_{\mathrm{P}_H}(f_{\mathrm{P},H_1}(x))
& \leq \mathbb{E}_{\mathrm{P}_H} \bigl( f_{\mathrm{P},H_1}(x) - f(x) \bigr)^2
\nonumber\\
& \leq c_L^2 \mathbb{E}_{\mathrm{P}_H} \bigl( \mathrm{diam}(A_{H_1}(x)) \bigr)^2
   \leq d c_L^2 \overline{h}_0^2.
          \label{VarEstimate}
\end{align}
Combining \eqref{EnsembleApproxErrorDecomp} with \eqref{equ::biasbound} and \eqref{VarEstimate}, we obtain
\begin{align*}
\mathbb{E}_{\mathrm{P}_H} \bigl( f_{\mathrm{P},\mathrm{E}}(x) - f(x) \bigr)^2
\leq c_L^2 c_{0,n}^{-2d} \overline{h}_0^{2(1+\alpha)} + \frac{1}{T} \cdot d c_L^2 \overline{h}_0^2,
\end{align*}
which proves the assertion.
\end{proof}

\begin{proof}[of Proposition \ref{ApproximationError::LTwoCounter}]
Lemma \ref{binset} implies that for any $x' \in A_H(x)$, there exist a random vector $u \sim \mathrm{Unif}[0,1]^d$ and a vector $v \in [0,1]^d$ such that 
\begin{align*} 
x' = x + S^{-1} R^{\top} (- u + v).
\end{align*}
Then \eqref{StepOne} yields
\begin{align}    \label{StepOneOne}
(f_{\mathrm{P},H}(x) - f(x))^2 
= \biggl( \biggl( \frac{1}{2} - u \biggr)^{\top} R S^{-1} \nabla f(x) + c_{\alpha} \overline{h}_0^{1+\alpha} \biggr)^2.
\end{align}
The orthogonality \eqref{RotationMatrix} of the rotation matrix $R$ tells us that
\begin{align} \label{CrossTermProperty1}
\sum_{i=1}^d R_{ij} R_{ik} = 
\begin{cases}
1, & \text{ if } j = k, \\
0,& \text{ if } j \neq k
\end{cases}
\end{align} 
and consequently we have
\begin{align} \label{CrossTermProperty2}
\sum_{i=1}^d \sum_{j \neq k} R_{ij} R_{ik} h_j h_k \cdot \frac{\partial f(x)}{\partial x_j} \cdot \frac{\partial f(x)}{\partial x_k}
= \sum_{j \neq k} h_j h_k \cdot \frac{\partial f(x)}{\partial x_j} \cdot \frac{\partial f(x)}{\partial x_k} \sum_{i=1}^d R_{ij} R_{ik}
= 0.
\end{align}
Since the random variables $(u_i)_{i=1}^d$ are independent and identically distributed as $\mathrm{Unif}[0, 1]$, we have
\begin{align} \label{CrossTermProperty3}
\mathbb{E}_{\mathrm{P}_H} \bigg(\frac{1}{2}-u_i \bigg)=0,
\qquad \qquad
i = 1, \ldots, d,
\end{align}
and
\begin{align} \label{CrossTermProperty4}
\mathbb{E}_{\mathrm{P}_H} \bigg(\frac{1}{2}-u_i \bigg)^2=\frac{1}{12},
\qquad \qquad
i = 1, \ldots, d.
\end{align}
Then, for all $x \in B_{r,\sqrt{d} \cdot \overline{h}_0}^+ \cap \mathcal{A}_f$, \eqref{CrossTermProperty1}, \eqref{CrossTermProperty2}, \eqref{CrossTermProperty3}, and \eqref{CrossTermProperty4} yield
\begin{align}
\mathbb{E}_{\mathrm{P}_H}\biggl( \biggl( \frac{1}{2} - u \biggr)^{\top} R S^{-1} \nabla f(x) \biggr)^2 
& = \mathbb{E}_{\mathrm{P}_H}\biggl( \sum_{i=1}^d \biggl( \frac{1}{2} - u_i \biggr) \sum_{j=1}^d R_{ij} h_j \frac{\partial f(x)}{\partial x_j} \bigg)^2 
\nonumber\\
& = \sum_{i=1}^d \mathbb{E}_{\mathrm{P}_H} \biggl( \frac{1}{2} - u_i \biggr)^2 \biggl(\sum_{j=1}^d R_{ij} h_j \frac{\partial f(x)}{\partial x_j} \bigg)^2
\nonumber\\
& = \frac{1}{12} \mathbb{E}_{\mathrm{P}_H} \sum_{i=1}^d \sum_{j=1}^d R_{ij}^2h_j^2\bigg(\frac{\partial f(x)}{\partial x_j} \bigg)^2
\nonumber\\
& \geq \frac{d}{12}\underline{c}_f'^2\underline{h}_0^2 \geq \frac{d}{12}\underline{c}_f'^2c_0^2\overline{h}_0^2.
            \label{ErrorTermMain}
\end{align}
Combining \eqref{StepOne} with \eqref{ErrorTermMain} and using \eqref{CrossTermProperty3}, we see that for all $x \in B_{r,\sqrt{d} \cdot \overline{h}_0}^+ \cap \mathcal{A}_f$, if
\begin{align*}
h_0 \leq \biggl( \frac{\sqrt{d} \underline{c}'_f c_0}{4 \sqrt{3} c_L} \biggr)^{\frac{1}{\alpha}},
\end{align*}
then we have
\begin{align} \label{ApproxiamtionSingle}
\mathbb{E}_{\mathrm{P}_H} (f_{\mathrm{P},H}(x)-f(x))^2 
\geq \frac{d}{16}\underline{c}_f'^2c_0^2\overline{h}_0^2,
\end{align}
where the constant $c_0$ is as in Assumption \ref{assumption::h}. This completes the proof.
\end{proof}

\subsubsection{Proofs Related to Section \ref{subsubsec::EstError1}}

\begin{proof}[of Proposition \ref{OracleInequality::LOne}]
By (\ref{eq::inftyrep}) and (\ref{eq::pih}), we obtain that for $t = 1, \ldots, T$,
\begin{align*}
\|f_{\mathrm{D},H_t} - f_{\mathrm{P},H_t}\|_{L_{\infty}(\mu)}
= \sup_{j \in \mathcal{I}_{H_t}} \frac{|\mathbb{E}_{\mathrm{D}} \eins_{A_j} - \mathbb{E}_{\mathrm{P}} \eins_{A_j}|}{\mu(A_j)} 
\leq \sup_{A \in \pi_h} \frac{|\mathbb{E}_{\mathrm{D}} \eins_A - \mathbb{E}_{\mathrm{P}} \eins_A|}{\mu(A)} .
\end{align*}
Moreover, we have
\begin{align}\label{eq::rewriteE}
\sup_{t=1,\ldots,T}	\|f_{\mathrm{D},H_t} - f_{\mathrm{P},H_t}\|_{L_{\infty}(\mu)} \leq \sup_{A \in \pi_h} \frac{|\mathbb{E}_{\mathrm{D}} \eins_A - \mathbb{E}_{\mathrm{P}} \eins_A|}{\mu(A)} .
\end{align}
Similar to the proof of Proposition \ref{OracleInequalityInftyNorm}, for any $A\in\pi_h$, we also need to bound the term
\begin{align*}
\frac{|\mathbb{E}_{\mathrm{D}} \eins_A - \mathbb{E}_{\mathrm{P}} \eins_A|}{\mu(A)}.
\end{align*}
Choose $A_1, \cdots, A_{m_2} \in \pi_h$ such that $\{A_1, \cdots, A_{m_2}\}$ is an $\tilde{\varepsilon}$-net of $\pi_h$ with respect to $\|\cdot\|_{L_1(Q)}$, where $m_2 = \mathcal{N}(\pi_h, L_1(Q), \tilde{\varepsilon})$.
Similar to the proof of Proposition \ref{OracleInequalityInftyNorm}, we get the same result as \eqref{eq::resident}:
for any $A\in \pi_h$, there exists $j\in\{1,\cdots,m_2\}$ such that
\begin{align}
\frac{|\mathbb{E}_{\mathrm{D}} \eins_A - \mathbb{E}_{\mathrm{P}} \eins_A|}{\mu(A)} 
\leq \frac{|\mathbb{E}_{\mathrm{D}} \eins_{A_j} - \mathbb{E}_{\mathrm{P}} \eins_{A_j}|}{\mu(A_j)}  
+ 2 \varepsilon.
\end{align}
where 
\begin{align*}
\tilde{\varepsilon} := \frac{\underline{h}_{0,n}^{2d} \varepsilon}{\underline{h}_{0,n}^d + \mu(B_r)}
\end{align*}
This together with \eqref{eq::rewriteE} yields
\begin{align*}
\sup_{t=1,\ldots,T}	\|f_{\mathrm{D},H_t} - f_{\mathrm{P},H_t}\|_{L_{\infty}(\mu)}
\leq \sup_{j=1,\ldots,m_2}  \frac{|\mathbb{E}_{\mathrm{D}} \eins_{A_j} - \mathbb{E}_{\mathrm{P}} \eins_{A_j}|}{\mu({A_j})}
+2\varepsilon.
\end{align*}
By Bernstein's inequality and union bound argument, we obtain the same result as  \eqref{BernEstimateInftyMax}, that is,
with probability $\mathrm{P}^n$ at least $1-2m_2 e^{-\tau}$, there holds 
\begin{align*}
\sup_{j \in \{ 1, \ldots, m_2 \}} \frac{|\mathbb{E}_{\mathrm{D}} \eins_{A_j} - \mathbb{E}_{\mathrm{P}} \eins_{A_j}|}{\mu(A_j)} 
\leq \sqrt{\frac{2\|f\|_{L_{\infty}(\mu)} \tau}{n\underline{h}_{0,n}^d}} + \frac{2\tau}{3n\underline{h}_{0,n}^d}.
\end{align*}
By a simple variable transformation, we see that for any $\mu(A_j) \geq \underline{h}_{0,n}^d$, 
\begin{align} \label{EstimErrorInftyE}
\sup_{t=1,\ldots,T}	\|f_{\mathrm{D},H_t} - f_{\mathrm{P},H_t}\|_{L_{\infty}(\mu)}
\leq \sqrt{\frac{2 \|f\|_{L_{\infty}(\mu)} (\tau + \log(2m_2))}{n\underline{h}_{0,n}^d}}
+ \frac{2(\tau+\log(2m_2))}{3n\underline{h}_{0,n}^d} + 2 \varepsilon
\end{align}
holds with probability $\mathrm{P}^n$ at least $1 - e^{-\tau}$.
Next, we estimate the term $\log(2m_2)$ with $m_2 := \mathcal{N}(\eins_{\pi_h}, \|\cdot\|_{L_1(\mathrm{Q})}, \tilde{\varepsilon})$. 
The upper bound \eqref{CollectionCoveringNumber} of $\mathcal{N}(\eins_{\pi_h}, \|\cdot\|_{L_1(\mathrm{Q})}, \tilde{\varepsilon})$  implies that for any $\varepsilon \in (0, 1/\max \{ e, 2K, \mu(B_r) \})$, there holds
\begin{align*}
\log(2m_2)
\leq 2^{d+4} \log(1/\varepsilon) +  2^{d+2}\log(1/\underline{h}_{0,n}^d),
\end{align*}
This together with \eqref{EstimErrorInftyE} implies that for all $x \in B^+_{r, \sqrt{d} \cdot \overline{h}_{0,n}}$, there holds
\begin{align*}
\sup_{t=1,\ldots,T}	\|f_{\mathrm{D},H_t} - f_{\mathrm{P},H_t}\|_{L_{\infty}(\mu)}
& \leq \sqrt{\frac{2 \|f\|_{L_{\infty}(\mu)} (\tau + 2^{d+4} \log(1/\varepsilon) +  2^{d+2}\log(1/\underline{h}_{0,n}^d))}{n\underline{h}_{0,n}^d}}
\nonumber\\
& \phantom{=}
+ \frac{2(\tau+2^{d+4}\log(1/\varepsilon)+2^{d+2}\log(1/\underline{h}_{0,n}^d))}{3n\underline{h}_{0,n}^d} + 2\varepsilon
\end{align*}
with probability $\mathrm{P}^n$ at least $1 - e^{-\tau}$.
Taking $\varepsilon = 1/n$, then for any 
$n > N_0 := \max\{e, 2K, \mu(B_r)\}$,
we have 
\begin{align*}
\sup_{t=1,\ldots,T}	\|f_{\mathrm{D},H_t} - f_{\mathrm{P},H_t}\|_{L_{\infty}(\mu)}
& \leq \sqrt{\frac{2 \|f\|_{L_{\infty}(\mu)} (\tau + 2^{d+4} \log(1/\varepsilon) +  2^{d+2}\log(1/\underline{h}_{0,n}^d))}{n\underline{h}_{0,n}^d}}
\nonumber\\
& \phantom{=}
+ \frac{2(\tau+2^{d+4}\log(1/\varepsilon)+2^{d+2}\log(1/\underline{h}_{0,n}^d))}{3n\underline{h}_{0,n}^d} + \frac{2}{n}
\end{align*}
with probability $\mathrm{P}^n$ at least $1 - e^{-\tau}$.
Finally, we need to estimate the term 
\begin{align*}
\| f_{\mathrm{D},\mathrm{E}} - f_{\mathrm{P},\mathrm{E}}\|_{L_{\infty}(\mu)}.
\end{align*}
For all $\delta > 0$ and all $x \in B^+_{r, \sqrt{d} \cdot \overline{h}_{0,n}}$
, we have
\begin{align*}
\mathrm{P} \biggl( \biggl\| \frac{1}{T}\sum^T_{t=1} f_{\mathrm{D},H_t}-\frac{1}{T}\sum^T_{t=1} f_{\mathrm{P},H_t} \biggr\|_{L_{\infty}(\mu)} \leq \delta \biggr)
& \geq \mathrm{P} \biggl( \frac{1}{T} \sum^T_{t=1} \|f_{\mathrm{D},H_t}-f_{\mathrm{P},H_t}\|_{L_{\infty}(\mu)}
\leq \delta \biggr)
\\
& \geq \mathrm{P} \biggl( \sup_{t=1,\ldots,T}	\|f_{\mathrm{D},H_t} - f_{\mathrm{P},H_t}\|_{L_{\infty}(\mu)}
\leq \delta \biggr).
\end{align*}
Consequently, for any $n > N_0$ and for all $x \in B^+_{r, \sqrt{d} \cdot \overline{h}_{0,n}}$, we obtain 
\begin{align}\label{equ::sampleerrorP}
\|f_{\mathrm{D},\mathrm{E}}-f_{\mathrm{P},\mathrm{E}}\|_{L_{\infty}(\mu)}
& \leq \sqrt{\frac{2 \|f\|_{L_{\infty}(\mu)} (\tau + 2^{d+4} \log n +  2^{d+2}\log(1/\underline{h}_{0,n}^d))}{n\underline{h}_{0,n}^d}}
\nonumber\\
& \phantom{=}
+ \frac{2(\tau+2^{d+4}\log n+2^{d+2}\log(1/\underline{h}_{0,n}^d))}{3n\underline{h}_{0,n}^d} + \frac{2}{n}
\end{align}
with probability $\mathrm{P}^n$ at least $1 - e^{-\tau}$.
\end{proof}

\begin{proof}[of Proposition \ref{OracleInequality::LTwoCounter}]
Recall that for a fixed histogram transform $H$, the set $\pi_H$ is defined as the collection of all cells in the partition induced by $H$, that is, $\pi_H := \{ A_j \}_{j \in \mathcal{I}_H}$. To estimate the first term in \eqref{equ::L2Decomposition}, we observe that for any $x \in B_r$, there holds
\begin{align}
\mathbb{E}_{\mathrm{P}^n} \bigl( (f_{\mathrm{D},H}(x) - f_{\mathrm{P},H}(x))^2 | \pi_H \bigr)
& = \mathrm{Var}_{\mathrm{P}^n} \bigl( f_{\mathrm{D},H}(x) | \pi_H \bigr)
\nonumber\\
& = \mathrm{Var}_{\mathrm{P}^n} \biggl( \frac{1}{n \mu(A_H(x))} \sum_{i=1}^n \eins_{\{ x_i \in A_H(x) \}} \bigg| \pi_H \biggr)
\nonumber\\
& \geq \frac{1}{n^2\overline{h}_{0,n}^{2d}} \sum_{i=1}^n \mathrm{P}(A_H(x)) (1 - \mathrm{P}(A_H(x)))
\nonumber\\
& = \frac{1}{n\overline{h}_{0,n}^{2d}} \mathrm{P}(A_H(x))(1 - \mathrm{P}(A_H(x))),
    \label{ConditionalSampleError}
\end{align}
where $\mathbb{E}_{\mathrm{P}^n}(\cdot | \pi_H)$ and $\mathrm{Var}_{\mathrm{P}^n}(\cdot | \pi_H)$ denote the conditional expectation and conditional variance with respect to $\mathrm{P}^n$ on the partition $\pi_H$, respectively. 

Lemma \ref{binset} implies that for any $x' \in A_H(x)$, there exist a random vector $u \sim \mathrm{Unif}[0,1]^d$ and a vector $v \in [0,1]^d$ such that 
\begin{align*} 
x' = x + S^{-1} R^{\top} (- u + v).
\end{align*}
By \eqref{TaylorEntwicklung} and \eqref{JacobiTrans}, we have
\begin{align}
\mathrm{P}(A_H(x)) 
& = \int_{A_H(x)} f(x') \, dx'
\nonumber\\
& = \biggl( \prod_{i=1}^d h_i \biggr) 
\int_{[0,1]^d} \bigl( f(x) + \nabla f(x)^{\top} S^{-1} R^{\top} (- u + v) + c_{\alpha} \overline{h}_0^{1+\alpha} \bigr) \, dv
\nonumber\\
& = \biggl( \prod_{i=1}^d h_i \biggr) \biggl( 
f(x) + \int_{[0,1]^d} \nabla f(x)^{\top} S^{-1} R^{\top} (- u + v) \, dv + c_{\alpha} \overline{h}_0^{1+\alpha} \biggr)
\nonumber\\
& = \biggl( \prod_{i=1}^d h_i \biggr) \biggl( 
f(x) + \bigg(\int_{[0,1]^d} (-u+v)^{\top} \ dv\bigg) R S^{-1} \nabla f(x) + c_{\alpha} \overline{h}_0^{1+\alpha} \biggr)
\nonumber\\
& = \biggl( \prod_{i=1}^d h_i \biggr) \biggl( 
f(x) + \biggl( \frac{1}{2} - u \biggr)^{\top} R S^{-1} \nabla f(x) + c_{\alpha} \overline{h}_0^{1+\alpha} \biggr).
      \label{PAHx}
\end{align}
Elementary Analysis tells us that for any $a_1, \ldots, a_d \in \mathbb{R}$, there holds
\begin{align*} 
\frac{a_1 + \ldots + a_d}{d} 
\leq \sqrt{\frac{a_1^2 + \ldots + a_d^2}{d}},
\end{align*}
which implies that
\begin{align*} 
\biggl| \biggl( \frac{1}{2} - u \biggr)^{\top} R S^{-1} \nabla f(x) \biggr|
\leq d \cdot \frac{1}{2} \cdot \sqrt{d} \cdot  \overline{h}_0 \cdot c_L
= \frac{d \sqrt{d} c_L}{2} \cdot  \overline{h}_0.
\end{align*}
This together with \eqref{PAHx} yields that for all $x \in B_{r, \sqrt{d} \cdot \overline{h}_0}^+ \cap \mathcal{A}_f$, there hold
\begin{align}
\mathrm{P}(A_H(x)) 
\leq \overline{h}_0^d \biggl( \overline{c}_f 
               + \frac{d \sqrt{d} c_L}{2} \cdot  \overline{h}_0 
                    + c_{\alpha} \overline{h}_0^{1+\alpha} \biggr)
\end{align}
and
\begin{align}
\mathrm{P}(A_H(x)) 
\geq \overline{h}_0^d \biggl( \underline{c}_f
              - \frac{d \sqrt{d} c_L}{2} \cdot  \overline{h}_0
                   + c_{\alpha} \overline{h}_0^{1+\alpha} \biggr).
\end{align}
Then for any $n > N'$ with $N'$ as in \eqref{MinimalNumberSample}, we have
\begin{align} \label{PAHxSize}
\frac{1}{2} \underline{c}_f \overline{h}_0^d
\leq \mathrm{P}(A_H(x)) 
\leq 2 \overline{c}_f \overline{h}_0^d
\leq \frac{1}{2}.
\end{align}
Combining \eqref{ConditionalSampleError} with \eqref{PAHxSize}, we obtain
\begin{align*}
\mathbb{E}_{\mathrm{P}^n} \bigl( ( f_{\mathrm{D},H}(x) - f_{\mathrm{P},H}(x) )^2 | \pi_H \bigr)
& \geq \frac{\mathrm{P}(A_H(x)) (1 - \mathrm{P}(A_H(x)))}{n\overline{h}_{0,n}^{2d}}
\\
& \geq \frac{\mathrm{P}(A_H(x))}{2 n\overline{h}_{0,n}^{2d}}
   \geq \frac{\underline{c}_f \overline{h}_{0,n}^{d}}{4 n \overline{h}_{0,n}^{2d}}
   = \frac{\underline{c}_f}{4 n \overline{h}_{0,n}^{d}}.
\end{align*}
Consequently, for all $x \in B_{r, \sqrt{d} \cdot \overline{h}_0}^+ \cap \mathcal{A}_f$ and all $n \geq N'$, there holds
\begin{align} \label{EstimationSingle}
\mathbb{E}_{\mathrm{P}^n} \bigl( ( f_{\mathrm{D},H}(x) - f_{\mathrm{P},H}(x) )^2\bigr) 
\geq \frac{\underline{c}_f}{4 n\overline{h}_{0,n}^{d}}.
\end{align}
Thus, we proved the assertion.
\end{proof}

\subsubsection{Proofs Related to Section \ref{sec::LowerBoundSingles}}

\begin{proof}[of Theorem \ref{cor::ensemblerateP}]
Proposition \ref{ApproximationError::LTwo} together with Proposition \ref{OracleInequality::LOne} yields that
for all $x \in B^+_{r, \sqrt{d} \cdot \overline{h}_{0,n}}$, there holds
\begin{align*}
\|f_{\mathrm{D},\mathrm{E}} - f\|_{L_{\infty}(\mu)}
& \leq e^{\tau/2} c_L^2 \biggl( c_{0,n}^{-2d} \overline{h}_{0,n}^{2(1+\alpha)} + \frac{d}{T} \cdot  \overline{h}_{0,n}^2 \biggr)
\\
& \phantom{=}
+ \sqrt{\frac{2 \|f\|_{L_{\infty}(\mu)} (\tau + 2^{d+4} \log n +  2^{d+2}\log(1/\underline{h}_{0,n}^d))}{n\underline{h}_{0,n}^d}}
\nonumber\\
& \phantom{=}
+ \frac{2(\tau+2^{d+4}\log n+2^{d+2}\log(1/\underline{h}_{0,n}^d))}{3n\underline{h}_{0,n}^d} + \frac{2}{n}
\end{align*} 
in the sense of $L_2(\mathrm{P}_H)$-norm with probability $\mathrm{P}^n$ at least $1-e^{-\tau}$. Then, for all $x \in B^+_{r, \sqrt{d} \cdot \overline{h}_{0,n}}$, by choosing
\begin{align*}
\underline{h}_{0,n} & := (n/\log n)^{-\frac{1}{2 (1+\alpha) + d}},
\\
T_n & := n^{\frac{2 \alpha}{2 (1+\alpha) + d}},
\end{align*}
we obtain 
\begin{align}\label{RatesLoneEnsemble}
\|f_{\mathrm{D},\mathrm{E}} - f\|_{L_{\infty}(\mu)}
\lesssim   (\log n/n)^{\frac{1+\alpha}{2(1+\alpha)+d}}
\end{align}	
in the sense of $L_2(\mathrm{P}_H)$-norm with probability $\mathrm{P}_n$ at least $1 - e^{-\tau}$.
\end{proof}

\begin{proof}[of Theorem \ref{thm::LowerBoundSingles}]
Recall the error decomposition \eqref{equ::L2Decomposition} of single random histogram transform density estimator.
Then \eqref{ApproxiamtionSingle} and \eqref{EstimationSingle} yield that for all $x \in B_{r, \sqrt{d} \cdot \overline{h}_0}^+ \cap \mathcal{A}_f$ and all $n > N_0$, there holds
\begin{align*}
\mathbb{E}_{\mathrm{P}_H \otimes \mathrm{P}^n} (f_{\mathrm{D},H}(x)-f(x))^2 
\geq \frac{d}{16} \underline{c}_f'^2 c_0^2 \cdot \overline{h}_{0,n}^2 + \frac{\underline{c}_f}{4 n\overline{h}_{0,n}^{d}}.
\end{align*}
By choosing 
\begin{align*}
\overline{h}_{0,n} := n^{-\frac{1}{2+d}}, 
\end{align*}
we obtain
\begin{align*}
\mathbb{E}_{\nu_n} (f_{\mathrm{D},H}(x)-f(x))^2 
\gtrsim n^{-\frac{2}{2+d}},
\end{align*}
which proves the assertion.
\end{proof}

\section{Conclusion}\label{sec::conclusion}

In this paper, we study the density estimation problem with a nonparametric strategy named histogram transform ensembles (HTE) density estimator which provides an effective means taking full advantage of the nature of ensemble learning, large diversity due to the random histogram transform and the inherent local adaptiveness of histogram estimators. The main results presented in this paper cover the universal consistency under $L_1(\mu)$-norm and the convergence rates in terms of several norms and convergence types under different reasonable tail assumptions.
More precisely, for the case where $f$ lies in the space $C^{0,\alpha}$, the convergence rates for both single and ensemble estimators  are proved to be optimal up to certain logarithmic factor in the sense of $L_1(\mu)$- and $L_{\infty}(\mu)$-norm. 
In contrast, for the subspace $C^{1,\alpha}$, almost optimal convergence rates can merely be established for the ensembles and the lower bound of single estimators illustrates the benefits of histogram transform ensembles over single density estimator. In experiments, we propose an adaptive version of HTE. Numerical comparisons between our adaptive HTE and other density estimators further illustrate the satisfactory performance with respect to the estimation accuracy.

\small{}


\begin{thebibliography}{54}
		\providecommand{\natexlab}[1]{#1}
		\providecommand{\url}[1]{\texttt{#1}}
		\expandafter\ifx\csname urlstyle\endcsname\relax
		\providecommand{\doi}[1]{doi: #1}\else
		\providecommand{\doi}{doi: \begingroup \urlstyle{rm}\Url}\fi
		
		\bibitem[Angiulli and Pizzuti(2002)]{Angiulli2002Fast}
		Fabrizio Angiulli and Clara Pizzuti.
		\newblock Fast outlier detection in high dimensional spaces.
		\newblock In \emph{Principles of Data Mining and Knowledge Discovery}, pages
		15--27. Springer Berlin Heidelberg, 2002.
		
		\bibitem[Bache and Lichman(2013)]{bache2013uci}
		Kevin Bache and Moshe Lichman.
		\newblock {UCI} machine learning repository, 2013.
		
		\bibitem[Bithell(1990)]{Bithell2010An}
		John~F. Bithell.
		\newblock An application of density estimation to geographical epidemiology.
		\newblock \emph{Statistics in Medicine}, 9\penalty0 (6):\penalty0 691--701,
		1990.
		
		\bibitem[Blaser and Fryzlewicz(2016)]{JMLR:v17:blaser16a}
		Rico Blaser and Piotr Fryzlewicz.
		\newblock Random rotation ensembles.
		\newblock \emph{The Journal of Machine Learning Research}, 17\penalty0
		(4):\penalty0 26, 2016.
		
		\bibitem[Botev et~al.(2010)Botev, Grotowski, and Kroese]{botev2010kernel}
		Zdravko~I. Botev, Joseph~F. Grotowski, and Dirk~P. Kroese.
		\newblock Kernel density estimation via diffusion.
		\newblock \emph{The Annals of Statistics}, 38\penalty0 (5):\penalty0
		2916--2957, 2010.
		
		\bibitem[Bremain(2000)]{bremain2000some}
		Leo Bremain.
		\newblock Some infinite theory for predictor ensembles.
		\newblock \emph{University of California at Berkeley Papers}, 2000.
		
		\bibitem[Breunig et~al.(2000)Breunig, Kriegel, Ng, and Sander]{Breunig2000LOF}
		Markus~M. Breunig, Hans~Peter Kriegel, Raymond~T. Ng, and J\"{o}rg Sander.
		\newblock Lof: Identifying density-based local outliers.
		\newblock In \emph{ACM Sigmod International Conference on Management of Data},
		2000.
		
		\bibitem[Chang(2016)]{chang2016nonparametric}
		Ju~Yong Chang.
		\newblock Nonparametric feature matching based conditional random fields for
		gesture recognition from multi-modal video.
		\newblock \emph{IEEE Transactions on Pattern Analysis and Machine
			Intelligence}, 38\penalty0 (8):\penalty0 1612--1625, 2016.
		
		\bibitem[Devroye and Gy\"{o}rfi(1983)]{devroye1983distributionfree}
		Luc Devroye and L{\'a}szl{\'o} Gy\"{o}rfi.
		\newblock Distribution-free exponential bound on the $l_1$ error of
		partitioning estimates of a regression function.
		\newblock In \emph{Proceedings of the Fourth Pannonian Symposium on
			Mathematical Statistics}, pages 67--76, 1983.
		
		\bibitem[Donoho et~al.(1996)Donoho, Johnstone, Kerkyacharian, and
		Picard]{David96Wavelet}
		David~L. Donoho, Iain~M. Johnstone, G\'{e}rard Kerkyacharian, and Dominique
		Picard.
		\newblock Density estimation by wavelet thresholding.
		\newblock \emph{The Annals of Statistics}, 24\penalty0 (2):\penalty0 508--539,
		1996.
		
		\bibitem[Doucet et~al.(2001)Doucet, de~Freitas, and Gordon]{doucet2001an}
		Arnaud Doucet, Nando de~Freitas, and Neil Gordon.
		\newblock An introduction to sequential {M}onte {C}arlo methods.
		\newblock In \emph{Sequential {M}onte {C}arlo methods in practice}, pages
		3--14. Springer, New York, 2001.
		
		\bibitem[Escobar and West(1995)]{Escobar95Bayesian}
		Michael~D. Escobar and Mike West.
		\newblock Bayesian density estimation and inference using mixtures.
		\newblock \emph{Journal of the American Statistical Association}, 90\penalty0
		(430):\penalty0 577--588, 1995.
		
		\bibitem[Freedman and Diaconis(1981)]{freedman1981on}
		David Freedman and Persi Diaconis.
		\newblock On the histogram as a density estimator: {$L_{2}$} theory.
		\newblock \emph{Probability Theory and Related Fields}, 57\penalty0
		(4):\penalty0 453--476, 1981.
		
		\bibitem[Gin\'{e} and Nickl(2016)]{Gine2016Infinitedimension}
		Evarist Gin\'{e} and Richard Nickl.
		\newblock \emph{Mathematical Foundations of Infinite-dimensional Statistical
			Models}.
		\newblock Cambridge University Press, New York, 2016.
		
		\bibitem[Glick(1973)]{glick1973samplebased}
		Ned Glick.
		\newblock Sample-based multinomial classification.
		\newblock \emph{Biometrics}, 29:\penalty0 241--256, 1973.
		
		\bibitem[Gordon and Olshen(1978)]{gordon1978asymptotically}
		Louis Gordon and Richard~A. Olshen.
		\newblock Asymptotically efficient solutions to the classification problem.
		\newblock \emph{The Annals of Statistics}, 6\penalty0 (3):\penalty0 515--533,
		1978.
		
		\bibitem[Hang et~al.(2018)Hang, Steinwart, Feng, and Suykens]{hang2018kernel}
		Hanyuan Hang, Ingo Steinwart, Yunlong Feng, and Johan A.~K. Suykens.
		\newblock Kernel density estimation for dynamical systems.
		\newblock \emph{The Journal of Machine Learning Research}, 19\penalty0
		(35):\penalty0 1--49, 2018.
		
		\bibitem[Hansen(1994)]{Bruce94Autoregressive}
		Bruce~E. Hansen.
		\newblock Autoregressive conditional density estimation.
		\newblock \emph{International Economic Review}, 35\penalty0 (3):\penalty0
		705--730, 1994.
		
		\bibitem[H\"{a}rdle et~al.(2004)H\"{a}rdle, M\"{u}ller, Sperlich, and
		Werwatz]{hardle2012nonparametric}
		Wolfgang H\"{a}rdle, Marlene M\"{u}ller, Stefan Sperlich, and Axel Werwatz.
		\newblock \emph{Nonparametric and Semiparametric Models}.
		\newblock Springer-Verlag, New York, 2004.
		
		\bibitem[Hartigan(1975)]{Har1975}
		John~A. Hartigan.
		\newblock \emph{Clustering Algorithms}.
		\newblock John Wiley \& Sons, New York-London-Sydney, 1975.
		
		\bibitem[Householder(1958)]{Householder1958Unitary}
		Alston~S. Householder.
		\newblock Unitary triangularization of a nonsymmetric matrix.
		\newblock \emph{Journal of the Association for Computing Machinery},
		5:\penalty0 339--342, 1958.
		
		\bibitem[Hwang et~al.(1994)Hwang, Lay, and Lippman]{hwang1994nonparametric}
		Jenq-Neng Hwang, Shyh-Rong Lay, and Alan Lippman.
		\newblock Nonparametric multivariate density estimation: a comparative study.
		\newblock \emph{IEEE Transactions on Signal Processing}, 42\penalty0
		(10):\penalty0 2795--2810, 1994.
		
		\bibitem[Ihsani and Farncombe(2016)]{ihsani2016a}
		Alvin Ihsani and Troy~H. Farncombe.
		\newblock A kernel density estimator-based maximum {\it a posteriori} image
		reconstruction method for dynamic emission tomography imaging.
		\newblock \emph{IEEE Transactions on Image Processing}, 25\penalty0
		(5):\penalty0 2233--2248, 2016.
		
		\bibitem[Jeffreys(1946)]{Jeffreys1946An}
		Harold Jeffreys.
		\newblock An invariant form for the prior probability in estimation problems.
		\newblock \emph{Proceedings of the Royal Society. London. Series A.
			Mathematical, Physical and Engineering Sciences}, 186:\penalty0 453--461,
		1946.
		
		\bibitem[Jiang et~al.(2017)Jiang, Sun, Zhao, and Yang]{jiang2017fog}
		Yutong Jiang, Changming Sun, Yu~Zhao, and Li~Yang.
		\newblock Fog density estimation and image defogging based on surrogate
		modeling for optical depth.
		\newblock \emph{IEEE Transactions on Image Processing}, 26\penalty0
		(7):\penalty0 3397--3409, 2017.
		
		\bibitem[Klemel\"{a}(2009)]{klemela2009multivariate}
		Jussi Klemel\"{a}.
		\newblock Multivariate histograms with data-dependent partitions.
		\newblock \emph{Statistica Sinica}, 19\penalty0 (1):\penalty0 159--176, 2009.
		
		\bibitem[Kosorok(2008)]{Kosorok2008introcuction}
		Michael~R. Kosorok.
		\newblock \emph{Introduction to Empirical Processes and Semiparametric
			Inference}.
		\newblock Springer, New York, 2008.
		
		\bibitem[Li et~al.(2014)Li, Yang, and Wong]{Density2014}
		Dangna Li, Kun Yang, and Wing~Hung Wong.
		\newblock Density estimation via discrepancy based adaptive sequential
		partition.
		\newblock \emph{arXiv preprint arXiv:1404.1425v4}, 2014.
		
		\bibitem[Lissack and Fu(1976)]{Lissack76Error}
		Tsvi Lissack and King~Sun Fu.
		\newblock Error estimation in pattern recognition via {$L^{\alpha }$}-distance
		between posterior density functions.
		\newblock \emph{IEEE Transactions on Information Theory}, 22\penalty0
		(1):\penalty0 34--45, 1976.
		
		\bibitem[Liu et~al.(2011)Liu, Xu, Gu, Gupta, Lafferty, and
		Wasserman]{Liu2011Forest}
		Han Liu, Min Xu, Haijie Gu, Anupam Gupta, John Lafferty, and Larry Wasserman.
		\newblock Forest density estimation.
		\newblock \emph{The Journal of Machine Learning Research}, 12:\penalty0
		907--951, 2011.
		
		\bibitem[Liu and Wong(2014)]{liu2014multivariate}
		Linxi Liu and Wing~Hung Wong.
		\newblock Multivariate density estimation via adaptive partitioning {(I)}:
		sieve {MLE}.
		\newblock \emph{arXiv preprint arXiv:1401.2597}, 2014.
		
		\bibitem[Liu and Wong(2015)]{liu2015multivariate}
		Linxi Liu and Wing~Hung Wong.
		\newblock Multivariate density estimation via adaptive partitioning {(II)}:
		{P}osterior {C}oncentration.
		\newblock \emph{arXiv preprint arXiv:1508.04812v1}, 2015.
		
		\bibitem[Liu et~al.(2016)Liu, Wang, Feng, and Xi]{liu2016highway}
		Xu~Liu, Zilei Wang, Jiashi Feng, and Hongsheng Xi.
		\newblock Highway vehicle counting in compressed domain.
		\newblock In \emph{Proceedings of the {IEEE} {C}onference on {C}omputer
			{V}ision and {P}attern {R}ecognition}, pages 3016--3024, 2016.
		
		\bibitem[L\'{o}pez-Rubio(2013)]{Ezequiel2013HT}
		Ezequiel L\'{o}pez-Rubio.
		\newblock A histogram transform for probability density function estimation.
		\newblock \emph{IEEE Transactions on Pattern Analysis and Machine
			Intelligence}, 36:\penalty0 644--656, 12 2013.
		
		\bibitem[Lugosi and Nobel(1996)]{lugosi1996consistency}
		G\'{a}bor Lugosi and Andrew Nobel.
		\newblock Consistency of data-driven histogram methods for density estimation
		and classification.
		\newblock \emph{The Annals of Statistics}, 24\penalty0 (2):\penalty0 687--706,
		1996.
		
		\bibitem[Ma et~al.(2015)Ma, Yu, and Chan]{ma2015small}
		Zheng Ma, Lei Yu, and Antoni~B. Chan.
		\newblock Small instance detection by integer programming on object density
		maps.
		\newblock In \emph{Proceedings of the {IEEE} {C}onference on {C}omputer
			{V}ision and {P}attern {R}ecognition}, pages 3689--3697, 2015.
		
		\bibitem[Nelder and Mead(1965)]{nelder1965simplex}
		John~A Nelder and Roger Mead.
		\newblock A simplex method for function minimization.
		\newblock \emph{The computer journal}, 7\penalty0 (4):\penalty0 308--313, 1965.
		
		\bibitem[Parzen(1962)]{PARZEN1962On}
		Emanuel Parzen.
		\newblock On estimation of a probability density function and mode.
		\newblock \emph{Annals of Mathematical Statistics}, 33:\penalty0 1065--1076,
		1962.
		
		\bibitem[Pimentel et~al.(2014)Pimentel, Clifton, Lei, and
		Tarassenko]{Pimentel2014A}
		Marco A.~F. Pimentel, David~A. Clifton, Clifton Lei, and Lionel Tarassenko.
		\newblock A review of novelty detection.
		\newblock \emph{Signal Processing}, 99\penalty0 (6):\penalty0 215--249, 2014.
		
		\bibitem[Rodr\'{i}guez et~al.(2006)Rodr\'{i}guez, Kuncheva, and
		Alonso]{Rodr2006Rotation}
		Juan~J Rodr\'{i}guez, Ludmila~I Kuncheva, and Carlos~J Alonso.
		\newblock Rotation forest: A new classifier ensemble method.
		\newblock \emph{IEEE Transactions on Pattern Analysis and Machine
			Intelligence}, 28\penalty0 (10):\penalty0 1619--1630, 2006.
		
		\bibitem[Rosenblatt(1956)]{Davis1956Remarks}
		Murray Rosenblatt.
		\newblock Remarks on some nonparametric estimates of a density function.
		\newblock \emph{Annals of Mathematical Statistics}, 27\penalty0 (3):\penalty0
		832--837, 1956.
		
		\bibitem[Scott(2015)]{scott2015multivariate}
		David~W. Scott.
		\newblock \emph{Multivariate Density Estimation}.
		\newblock John Wiley \& Sons, Inc., Hoboken, NJ, second edition, 2015.
		
		\bibitem[Silva et~al.(2011)Silva, Blundell, and Teh]{silva2011mixed}
		Ricardo Silva, Charles Blundell, and Yee~Whye Teh.
		\newblock Mixed cumulative distribution networks.
		\newblock In \emph{Proceedings of the Fourteenth International Conference on
			Artificial Intelligence and Statistics}, pages 670--678, 2011.
		
		\bibitem[Silverman(1986)]{silverman1986density}
		Bernard~W. Silverman.
		\newblock \emph{Density Estimation for Statistics and Data Analysis}.
		\newblock Monographs on Statistics and Applied Probability. Chapman \& Hall,
		London, 1986.
		
		\bibitem[Simonoff(1996)]{simonoff1996smoothing}
		Jeffrey~S. Simonoff.
		\newblock \emph{Smoothing Methods in Statistics}.
		\newblock Springer-Verlag, New York, 1996.
		
		\bibitem[Steinwart and Christmann(2008)]{StCh08}
		Ingo Steinwart and Andreas Christmann.
		\newblock \emph{Support Vector Machines}.
		\newblock Information Science and Statistics. Springer, New York, 2008.
		
		\bibitem[Tang et~al.(2012)Tang, Salakhutdinov, and Hinton]{tang2012deep}
		Yichuan Tang, Ruslan Salakhutdinov, and Geoffrey Hinton.
		\newblock Deep mixtures of factor analysers.
		\newblock \emph{arXiv preprint arXiv:1206.4635}, 2012.
		
		\bibitem[Terrell and Scott(1992)]{terrell1992variable}
		George~R. Terrell and David~W. Scott.
		\newblock Variable kernel density estimation.
		\newblock \emph{The Annals of Statistics}, 20\penalty0 (3):\penalty0
		1236--1265, 1992.
		
		\bibitem[Uria et~al.(2013)Uria, Murray, and Larochelle]{uria2013rnade}
		Benigno Uria, Iain Murray, and Hugo Larochelle.
		\newblock {RNADE}: The real-valued neural autoregressive density-estimator.
		\newblock In \emph{Advances in Neural Information Processing Systems}, pages
		2175--2183, 2013.
		
		\bibitem[Valiant(1984)]{valiant84a}
		Leslie~G. Valiant.
		\newblock A theory of the learnable.
		\newblock \emph{Communications of the ACM}, 27\penalty0 (11):\penalty0
		1134--1142, 1984.
		
		\bibitem[Vapnik and Chervonenkis(1971)]{vapnik71a}
		Vladimir~N. Vapnik and Alexey~Ya Chervonenkis.
		\newblock On the uniform convergence of relative frequencies of events to their
		probabilities.
		\newblock \emph{Theory of Probability and its Applications}, 16\penalty0
		(2):\penalty0 264, 1971.
		
		\bibitem[Vestner et~al.(2017)Vestner, Litman, Rodola, Bronstein, and
		Cremers]{vestner2017product}
		Matthias Vestner, Roee Litman, Emanuele Rodola, Alex Bronstein, and Daniel
		Cremers.
		\newblock Product manifold filter: Non-rigid shape correspondence via kernel
		density estimation in the product space.
		\newblock In \emph{The {IEEE} {C}onference on {C}omputer {V}ision and {P}attern
			{R}ecognition}, July 2017.
		
		\bibitem[Wang et~al.(2018)Wang, Zou, and Wang]{wang2018manifoldbased}
		Yi~Wang, Yuexian Zou, and Wenwu Wang.
		\newblock Manifold-based visual object counting.
		\newblock \emph{IEEE Transactions on Image Processing}, 27\penalty0
		(7):\penalty0 3248--3263, 2018.
		
		\bibitem[Zhou et~al.(2018)Zhou, Rangarajan, and Gader]{zhou2018a}
		Yuan Zhou, Anand Rangarajan, and Paul~D. Gader.
		\newblock A {G}aussian mixture model representation of endmember variability in
		hyperspectral unmixing.
		\newblock \emph{IEEE Transactions on Image Processing}, 27\penalty0
		(5):\penalty0 2242--2256, 2018.
		
	\end{thebibliography}
\end{document}